\newtheorem{theorem}{Theorem}[section]
\newtheorem{corollary}[theorem]{Corollary}
\newtheorem{lemma}[theorem]{Lemma}
\newtheorem{proposition}[theorem]{Proposition}
\theoremstyle{remark}
\newtheorem{remark}[theorem]{\sc Remark}
\theoremstyle{remark}
\theoremstyle{definition}
\newtheorem{definition}[theorem]{Definition}
\theoremstyle{remark}
\newtheorem{example}[theorem]{\sc Example}
\theoremstyle{remark}
\theoremstyle{remark}
\numberwithin{equation}{section}  
\renewcommand{\Box}{\square}    
\newcommand{\cal}{\mathcal}
\newcommand{\cFb}{\mathcal P}
\newcommand{\cGb}{{\mathcal G}^{\bullet}}
\def\be{\begin{equation}}
\def\ee{\end{equation}}
\def\bt{\begin{theorem}}
\def\et{\end{theorem}}
\newcommand{\lra}{\longrightarrow}
\def\bc{\begin{corollary}}
\def\ec{\end{corollary}}
\newcommand{\rank}{\mathrm{rank\hspace{2pt}}}
\renewcommand{\r}{{\mathrm{rHd}}}
\renewcommand{\int}{{\mathrm{int}}}
\newcommand{\mult}{{\mathrm{mult}}}
\renewcommand{\int}{{\mathrm{int}}}
\newcommand{\Aut}{\mathop{{\rm{Aut}}}\nolimits}
\newcommand{\Sing}{{\mathrm{Sing\hspace{0.5pt}}}}
\newcommand{\id}{{\mathrm{id}}}
\newcommand{\im}{{\mathrm{Im\hspace{1pt}}}}
\renewcommand{\ker}{\mathop{{\mathrm{ker}}}\nolimits}
\newcommand{\coker}{\mathop{{\mathrm{coker}}}\nolimits}
\newcommand{\lk}{{\mathbb C \rm{lk}}}
\newcommand{\e}{\varepsilon}
\newcommand{\m}{\setminus}
\newcommand{\fin}{\hspace*{\fill}$\Box$\vspace*{2mm}}
\newcommand{\tF}{F^{\pitchfork}}
\newcommand{\tE}{E^{\pitchfork}}
\newcommand{\tmu}{\mu^{\pitchfork}}
\newcommand{\cC}{{\cal C}}
\newcommand{\cH}{{\cal H}}
\newcommand{\cF}{{\cal F}}
\newcommand{\cS}{{\cal S}}
\newcommand{\cW}{{\cal W}}
\newcommand{\cZ}{{\cal Z}}
\newcommand{\bC}{{\mathbb C}}
\newcommand{\bZ}{{\mathbb Z}}
\newcommand{\bQ}{{\mathbb Q}}
\newcommand{\bH}{{\mathbb H}}
\begin{document}

\title[The vanishing cohomology of singular hypersurfaces]
{The vanishing cohomology of non-isolated hypersurface singularities}

\author[L. Maxim ]{Lauren\c{t}iu Maxim}
\address{L. Maxim : Department of Mathematics, University of Wisconsin-Madison, 480 Lincoln Drive, Madison WI 53706-1388, USA}
\email {maxim@math.wisc.edu}
\thanks{L. Maxim is partially supported by the Simons Foundation Collaboration Grant \#567077.}

\author[L. P\u{a}unescu ]{Lauren\c{t}iu P\u{a}unescu}
\address{L. P\u{a}unescu: Department of Mathematics, University of Sydney, Sydney, NSW, 2006, Australia}
\email {laurentiu.paunescu@sydney.edu.au}

\author[M. Tib\u{a}r]{Mihai Tib\u{a}r}
\address{M. Tib\u{a}r : Universit\' e de  Lille, CNRS, UMR 8524 -- Laboratoire Paul Painlev\'e, F-59000 Lille, France}  
\email {mihai-marius.tibar@univ-lille.fr}
\thanks{M. Tib\u{a}r acknowledges the support of the Labex CEMPI (ANR-11-LABX-0007). }

\keywords{Milnor fiber, vanishing cohomology, vanishing and nearby cycles, perverse sheaves, complex link}

\subjclass[2010]{32S30, 32S55, 32S60, 32S25, 58K60}

\date{\today}



\begin{abstract}
We employ the perverse vanishing cycles to 
show that each reduced cohomology group of the Milnor fiber, except the top two, can be computed from the restriction of the vanishing cycle complex to only singular strata with a certain lower bound in dimension.
Guided by geometric results, we alternately use the nearby and vanishing cycle functors to derive information about the Milnor fiber cohomology via iterated slicing by generic hyperplanes. These lead to the description of the reduced cohomology groups, except the top two, in terms of the vanishing cohomology of the nearby section. We use it to compute explicitly the lowest (possibly nontrivial) vanishing cohomology group of the Milnor fiber. 
\end{abstract}
 
\maketitle

\setcounter{section}{0}

\section{Introduction}

In his search for exotic spheres, Milnor \cite{Mi} initiated the study of the topology of complex hypersurface singularity germs. For a germ of an analytic map $f:(\bC^{n+1},0) \to (\bC,0)$ having a singularity at the origin, Milnor introduced what is now called the Milnor fibration $f^{-1}(D_\delta^*)\cap B_\epsilon \to D_\delta^*$ (in a small enough ball $B_\epsilon \subset \bC^{n+1}$ and over a small enough punctured disc $D_\delta^* \subset \bC$), and the Milnor fiber $F:=f^{-1}(t) \cap B_\epsilon$  of $f$ at $0$.  
Around the same time, Grothendieck  \cite{Gr1} proved Milnor's conjecture that the eigenvalues of the monodromy acting on $H^*(F;\bZ)$ are roots of unity, and Deligne \cite{Gr2} defined the nearby and vanishing cycle functors, $\psi_f$ and $\varphi_f$, globalizing Milnor's construction. 
A few years later, L\^e \cite{Le2} extended the geometric setting of the Milnor tube fibration to the case of functions defined on complex analytic germs, using the existence of Thom-Whitney stratifications on the zero set $f^{-1}(0)$, which was proved by Hironaka \cite{Hi}. 

Since their introduction more than half a century ago, the Milnor fiber  and vanishing cycles have found a wide range of applications, in fields like algebraic geometry, algebraic and geometric topology, symplectic geometry, singularity theory, enumerative geometry, computational topology and algebraic statistics, etc.
However, despite the enormous interest and vast applications, the very basic question of describing the topology of  the Milnor fiber (e.g., Betti numbers) for arbitrary hypersurface singularity germs remains largely open.

The Milnor fiber of an isolated hypersurface singularity germ was completely described by Milnor in \cite{Mi}. The study of hypersurfaces with $1$-dimensional singularities was initiated by Yomdin \cite{Io},  Siersma \cite{Si1, Si-icis, Si3, Si-Cam}, and continued and refined by  Vannier \cite{Va}, Pellikaan \cite{Pe0,Pe}, Schrauwen \cite{Sch}, de Jong \cite{deJ}, Zaharia \cite{Za0}, Tib\u{a}r \cite{Ti-iomdin, ST-deform}, etc. Milnor fibers for higher dimensional singularities were studied by L\^e  \cite{Le0}, Zaharia \cite{Za}, Shubladze \cite{Sh1,Sh2}, Massey \cite{Ma91, Ma94, Ma95, Ma0, Ma4}, Nemethi \cite{Ne, Ne2}, Tib\u{a}r \cite{Ti-nonisol, Ti-book}, Dimca-Saito \cite{DS}, Libgober \cite{Li}, Maxim \cite{Max1}, Fernandez de Bobadilla \cite{Fe, Fe2}, etc. 

In \cite{Le0}, L\^e developed a recurrent method for computing the Betti numbers of the Milnor fiber, based on slicing the singularity by generic hyperplanes, and in \cite{Va}, Vannier described a general handlebody model for the Milnor fibre of function germs with a $1$-dimensional singular locus. These results led Massey to a certain estimation of the Betti numbers of the  Milnor fiber,  based on the polar multiplicities encoded into what he called the ``L\^e numbers'' (e.g., see \cite{Ma95}, and also \cite{Ma0} for further developments). 

In \cite{DS}, Dimca and Saito investigated local consequences of the perversity of vanishing cycles, and computed the Milnor fiber cohomology from the restriction of the vanishing cycle complex to the real link of the singularity. In particular, they show that the reduced cohomology groups $\widetilde{H}^i(F;\bQ)$ of the Milnor fiber are completely determined for $i<n-1$ (and for $i=n-1$ only partially) by the restriction of the vanishing cycle complex to the complement of the singularity. More precise computations are made in the case when $f^{-1}(0)$ is a divisor with normal crossings in a punctured neighborhood of the singular point, and results are also given on the size of the Jordan blocks of the monodromy in this particular case. If $f$ is a homogeneous polynomial, a more refined dependence of the vanishing cohomology on the singular strata was obtained by Maxim in \cite[Proposition 5.1]{Max1}, and also Libgober \cite[Theorem 3.1]{Li} in the case when $f$ defines a central hyperplane arrangement.

\medskip

In this paper, we study the Milnor fiber cohomology for hypersurface singularity germs with a singular locus of arbitrary positive dimension. Before discussing our main results, let us introduce some notation and assumptions. Unless otherwise specified, all cohomology groups in this paper are with $\bZ$-coefficients.

\medskip

For $n \geq 1$, we consider a nonconstant holomorphic function germ $f:(X,0) \to (\bC,0)$ defined on a pure $(n+1)$-dimensional complex singularity germ $(X,0)$ contained in some ambient $(\bC^N,0)$. We assume moreover that
$$\r(X,\bZ)=n+1,$$ with $\r(X,\bZ)$ denoting the {\it rectified homological depth} of $X$ with respect to the ring $\bZ$, as defined by Grothendieck \cite{SGA2}, see also \cite{HL}, \cite{Sc}. For example, a local complete intersection  $X$ of pure complex dimension $n+1$ satisfies this property. 
Our assumption on $\r(X,\bZ)$ is equivalent to the following two conditions: $(i)$ the shifted constant sheaf $\underline{\bZ}_X[n+1]$ is a {\it $\bZ$-perverse sheaf} on $X$, and $(ii)$ the costalks of $\underline{\bZ}_X[n+1]$ in the lowest possible degree are free abelian; see Corollary \ref{cfield} for a precise formulation of this equivalence. If one is only interested in the $\bQ$-cohomology of the Milnor fiber (e.g., Betti numbers), it suffices to assume that $\r(X,\bQ)=n+1$, which is equivalent to the fact that $\underline{\bQ}_X[n+1]$ is a {\it $\bQ$-perverse sheaf} on $X$.

In the formulation of our results, the notation $X$ will tacitly mean a small enough representative of the germ at $0$ of the space $X$, that is, the intersection of $X$ with an arbitrarily small ball at the origin. The same convention applies to all the subgerms of $(X,0)$.
We denote by $\Sigma$ the germ of the stratified singular locus of $f$ (with respect to some fixed Whitney stratification of $X$), and we assume that the complex dimension $s$ at the origin of $\Sigma$ satisfies $0<s<n$. We denote by $F$ the Milnor fiber of $f$ at the origin, and let $\cFb:=\varphi_f\underline{\bZ}_X[n]$ be the complex of perverse vanishing cycles. It is then well known that the reduced Milnor fiber cohomology (i.e., the {\it vanishing cohomology}) of $F$ is computed by $\cFb$, and the only possibly non-trivial vanishing cohomology groups $\widetilde H^k(F)$ are concentrated in degrees $n-s \leq k \leq n$. 
Furthermore, the assumption $\r(X,\bZ)=n+1$ yields that the $\bZ$-perverse sheaf $\cFb$ also satisfies the property of freeness of costalks in lowest possible degree. This property is used in our paper to formulate cohomological substitutes for various homotopy-theoretic statements for the Milnor fiber (which don't necessarily hold if $X$ is singular).

 In the above notations, our first result (Theorem \ref{t14a}) uses only the perversity of the vanishing cycles $\cFb$ to show that, if $s \geq 2$ and $k < n-1$, the group $\widetilde{H}^{k}(F)$ can be computed from the restriction of the vanishing cycle complex to the singular strata of dimension $\geq n-k-1$ (see also Corollary \ref{c2.2} and Proposition \ref{t:izos}). This is a strengthening of the above mentioned result of Dimca-Saito (but from a different perspective, in the sense that our result involves the intersection of the whole singular locus of $f$ with a sufficiently small open ball, while Dimca-Saito formulate their result in terms of the real link; see Remark \ref{dif}(b) for more details), and it reduces the calculation of the vanishing cohomology to a hypercohomology spectral sequence. 
This computation, while tedious in general, can be made more explicit in the case of the least nontrivial vanishing cohomology group $\widetilde H^{n-s}(F)$. In fact, if $s\geq 1$, Theorem  \ref{c14a}  gives a monomorphism 
\be\label{upbi}
\widetilde{H}^{n-s}(F) \hookrightarrow \bigoplus_i \widetilde{H}^{n-s}(F^\pitchfork_{s,i})^{A_i},
\ee
where the summation is over the collection $\{\Sigma_{s,i}\}_i$  of $s$-dimensional singular strata of the germ of $\Sigma$ at the origin, $F^\pitchfork_{s,i}$ is the transversal Milnor fiber to the $s$-dimensional stratum $\Sigma_{s,i}$, and $A_i$ denotes the action of $\pi_1(\Sigma_{s,i})$ on $\widetilde{H}^{n-s}(F^\pitchfork_{s,i})$, with invariant subspaces appearing on the right hand side of \eqref{upbi}.
This represents an extension to arbitrary singularities of Siersma's  results  \cite{Si3} for $1$-dimensional singularities in terms of the {\it vertical monodromy},  as well as of the more recent development in \cite{ST-deform}, and answers Siersma's conjecture made in his overview \cite{Si-Cam}. It also provides 
 an upper bound for the Betti number $b_{n-s}(F)$. Note that the assumption $\r(X,\bZ)=n+1$ yields in addition that $\widetilde{H}^{n-s}(F)$ is free, since the right-hand side of \eqref{upbi} becomes in this case free.
 
If $s \geq 2$, Theorem \ref{c14a}(b) (see also \eqref{stl3s} and Remark \ref{r65}) expresses the ``defect'' of \eqref{upbi} from being an isomorphism in terms of costalks of the perverse vanishing cycle complex $\cFb$ at points in the $(s-1)$-dimensional strata. In particular, this yields a lower bound for $b_{n-s}(F)$, which to our knowledge is completely new. We also observe that, if $s\geq 2$ and $\Sigma$ does not contain any $(s-1)$-dimensional strata, the monomorphism \eqref{upbi} becomes an isomorphism. 
 
 If $s\geq 1$, monomorphisms similar to \eqref{upbi} are obtained in Theorem \ref{t14a} for all vanishing cohomology groups $\widetilde{H}^{k}(F)$, $n-s \leq k \leq n-1$, though not as explicit as in the case $k=n-s$.
The monomorphism \eqref{upbi} leads to divisibility results for the characteristic polynomials of Milnor monodromies (Corollary \ref{c:2}), as well as to upper bounds for the dimension of the monodromy eigenspaces and maximal sizes of Jordan blocks (Corollary \ref{c:3}). Furthermore, similar statements can be formulated for the eigenspaces of vanishing cohomology in terms of the generalized eigensheaves of the vanishing cycles (Theorem \ref{t15a} and Corollary \ref{c15a}).

\medskip

We next take a more geometric viewpoint  for computing the Milnor fiber cohomology via slicing.
In \S\ref{sl} we present a sheaf theoretic interpretation of results of L\^e \cite{Le0} and Tib\u{a}r \cite{Ti-nonisol} on the computation of the number of vanishing cycles via iterated slicing by generic hyperplanes.
In Proposition \ref{t:izos}, we then give (for $s \geq 2$) an interpretation of the reduced Milnor fiber cohomology in terms of the vanishing cohomology of the nearby section of $f$ (introduced in  \cite{Ti-nonisol}). 
Upper bounds for the Betti numbers of the Milnor fiber are derived via slicing in terms of polar multiplicities (Corollary \ref{t:bettibounds}). 

This slicing technique enables us to develop in  \S\ref{s:Hn-s}  a geometric method for the computation of the lowest  (possibly nontrivial)  vanishing cohomology group $H^{n-s}(F)$, for $n>s\ge 2$,  by using its isomorphism with the vanishing cohomology of the nearby section. It turns out that, by repeated slicing, the computation reduces to the case of a $2$-dimensional singular locus.  We then use ideas developed in  \cite{ST-vanishing} and \cite{ST-deform} for computing the  homology groups in case of 1-dimensional singularities via admissible deformations of $f$ to obtain in Corollary \ref{c:no1diminside} and Theorem \ref{t:euler} a description of $H^{n-s}(F)$ in terms of the invariant submodule of a monodromy representation \eqref{eq:hatA} on the transversal Milnor fibre of the $s$-dimensional strata,  which is in fact a genuine vertical monodromy representation. The similarity with the terms and results of Theorem \ref{c14a} is striking.
 But surprisingly, this monodromy representation is totally different from \eqref{upbi} used in Theorem \ref{c14a}, and in general cannot be deduced from it by a Lefschetz slicing argument.
 
  The results obtained from the perverse sheaf side and from the geometric side feed and complete each other in the pursuit of computability.
We detail a bunch of examples in \S \ref{examples}, showing how the general theory works and emphasizing the efficacy of Theorem \ref{t:euler} in computing $H^{n-s}(F)$.


\section{Preliminaries}\label{prel}
In this section, we review several concepts which play an important role in the remainder of the paper. In \S\ref{pv}, we recall some background on constructible complexes and perverse sheaves. In \S\ref{rect}, we recall the notion of rectified homological depth and indicate its relation to perverse sheaves. In \S\ref{nv}, we introduce the nearby and vanishing cycle functors associated to a holomorphic map, and describe their relation to the cohomology of the Milnor fiber. 


\subsection{Perverse sheaves}\label{pv}
In this section, we recall the definition of perverse sheaves (e.g., see \cite{Di, Max} for a quick introduction).

Let $A$ be a noetherian commutative ring of finite global dimension. Let $X$ be a complex analytic variety, and denote by $D^b(X)$ the derived category of bounded complexes of sheaves of $A$-modules. 

Recall that  a sheaf $\cF$ of $A$-modules is said to be {\it constructible} if there is a Whitney stratification $\cW$ of $X$ so that the restriction $\cF\vert_S$ of $\cF$ to every stratum $S\in \cW$ is an $A$-local system with finitely generated stalks. A bounded complex $\cF^{\centerdot} \in D^b(X)$ is said to be constructible if all its cohomology sheaves $\cH^j(\cF^{\centerdot})$ are constructible.
Denote by $D^b_c(X)$ the full triangulated subcategory of $D^b(X)$ consisting of constructible complexes (i.e., complexes which are constructible with respect to some Whitney stratification).

All dimensions below are taken to be complex dimensions. 
\begin{definition}\label{sper} \ 
\noindent(a) \ The {\it perverse $t$-structure} on $D_c^b(X)$ consists of the subcategories ${^pD}^{\leq 0}(X)$ and ${^pD}^{\geq 0}(X)$ of $D_c^b(X)$ defined as:
$${^pD}^{\leq 0}(X)=\{ \cF^{\centerdot} \in D^b_c(X) \mid \dim {\rm supp}^{-j}(\cF^{\centerdot}) \leq j, \forall j \in \bZ \},$$
$${^pD}^{\geq 0}(X)=\{ \cF^{\centerdot} \in D^b_c(X) \mid \dim {\rm cosupp}^{j}(\cF^{\centerdot}) \leq j, \forall j \in \bZ \},$$
where, for $k_x:\{x\} \hookrightarrow X$ denoting the point inclusion, we define the {\it $j$-th support} and, resp., the {\it $j$-th cosupport} of $\cF^{\centerdot} \in D^b_c(X)$ by:
$${\rm supp}^{j}(\cF^{\centerdot}) = \overline{\{x \in X \mid H^j(k_x^*\cF^{\centerdot}) \neq 0\}},$$
$${\rm cosupp}^{j}(\cF^{\centerdot}) = \overline{\{x \in X \mid H^j(k_x^!\cF^{\centerdot}) \neq 0\}}.$$
Here, $k_x^*\cF^{\centerdot}$ and $k_x^!\cF^{\centerdot}$ are called the {\it stalk} and, resp., {\it costalk} of $\cF^{\centerdot}$ at $x$. \\
\noindent(b) A  complex $\cF^{\centerdot} \in D^b_c(X)$ is called a {\it perverse sheaf} on $X$ if $\cF^{\centerdot} \in {^pD}^{\leq 0}(X) \cap {^pD}^{\geq 0}(X).$ \\
\noindent(c) We say that $\cF^{\centerdot} \in D^b_c(X)$ is {\it strongly perverse} if $\cF^{\centerdot} \in {^pD}^{\leq 0}(X) $ and  $\mathcal{D}_X\cF^{\centerdot} \in {^pD}^{\leq 0}(X)$, with $\mathcal{D}_X$ denoting the dualizing functor.
\end{definition}

\begin{example}\label{e22} Assume $X$ is of pure complex dimension with $c:X \to {\rm pt}$ the constant map to a point space, and let $\underline{A}_X=c^*A$ be the constant $A$-sheaf on $X$. Then:
\begin{itemize}
\item[(a)] $\underline{A}_X[\dim X] \in {^pD}^{\leq 0}(X)$.
\item[(b)] If $X$ is a local complete intersection then $\underline{A}_X[\dim X]$ is a perverse sheaf on $X$ (e.g., see \cite[Theorem 5.1.20]{Di}).
\end{itemize}
\end{example}

It is important to note that the categories ${^pD}^{\leq 0}(X)$ and ${^pD}^{\geq 0}(X)$ can also be described in terms of a fixed Whitney stratification of $X$. Indeed, the perverse $t$-structure can be characterized as follows (e.g., see \cite[Theorem 8.3.1]{Max}):
\begin{theorem}
If $\cF^{\centerdot} \in D^b_c(X)$ is constructible with respect to a Whitney stratification $\cW$ of $X$, then:
\begin{itemize}
\item[(i)] stalk vanishing:
\begin{center}
$\cF^{\centerdot} \in {^pD}^{\leq 0}(X)$ $\iff$ $\forall S \in \cW$,  $\forall x \in S$: $H^j(k_x^*\cF^{\centerdot})=0$ for all $j>-\dim S$.
\end{center}
\item[(ii)] costalk vanishing:
\begin{center}
$\cF^{\centerdot} \in {^pD}^{\geq 0}(X)$ $\iff$ $\forall S \in \cW$,  $\forall x \in S$: $H^j(k_x^!\cF^{\centerdot})=0$ for all $j<\dim S$.
\end{center}
\end{itemize}
\fin
\end{theorem}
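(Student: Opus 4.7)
The plan is to extract both parts from the fact that $\cW$-constructibility controls stalks and costalks stratum by stratum. Since each cohomology sheaf $\cH^j(\cF^{\centerdot})$ is an $A$-local system on every $S\in\cW$, the stalk $H^j(k_x^*\cF^{\centerdot})=\cH^j(\cF^{\centerdot})_x$ is locally constant along $S$; hence ${\rm supp}^j(\cF^{\centerdot})$ is a union of closures of strata, and its dimension equals the maximum of $\dim S$ taken over those strata $S\in\cW$ on which $H^j(k_x^*\cF^{\centerdot})$ is nonzero.

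For part (i), we rewrite the perverse support condition as $\dim{\rm supp}^j(\cF^{\centerdot})\le -j$ for every $j\in\bZ$. For the implication $(\Rightarrow)$, if $x\in S$ and $H^j(k_x^*\cF^{\centerdot})\neq 0$, then by local constancy $S\subset{\rm supp}^j(\cF^{\centerdot})$, so $\dim S\le -j$, equivalently $j\le -\dim S$; the contrapositive is the required vanishing for $j>-\dim S$. For $(\Leftarrow)$, assuming the stalk vanishing, ${\rm supp}^j(\cF^{\centerdot})$ lies in the locally finite union $\bigcup_{\dim S\le -j}\overline{S}$, whose dimension is at most $-j$.

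For part (ii), the cleanest route is Verdier duality. The dualizing functor $\cD_X$ swaps ${^pD}^{\le 0}(X)$ and ${^pD}^{\ge 0}(X)$ on $D^b_c(X)$ (this is immediate from the definitions, since $\cD_X$ exchanges supports and cosupports), and satisfies $k_x^!\circ\cD_X\simeq\cD_{\rm pt}\circ k_x^*$. Applying (i) to $\cD_X\cF^{\centerdot}$ and translating via these isomorphisms converts a stalk condition in degree $-j$ into the desired costalk condition in degree $j$, giving (ii). Alternatively, one can argue directly by induction on strata, using the attaching triangles for the closed inclusions of strata to show that $k_x^!\cF^{\centerdot}$ is locally constant on $S$ up to a shift by $\dim S$, the shift being computed via a transverse slice to $S$ in $X$.

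The main technical point, and the source of the asymmetric inequalities $j>-\dim S$ versus $j<\dim S$, is the control of costalks along each stratum. Via the duality approach this asymmetry is built into the shifts inside $\cD_{\rm pt}\circ k_x^*$; in the direct approach it arises from the complex codimension of the transverse slice used to compute $k_x^!\cF^{\centerdot}$. Either way, once the local-constancy behaviour of stalks and costalks along strata is in hand, the rest of the argument reduces to the elementary dimension comparisons used in part (i).
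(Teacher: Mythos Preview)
The paper does not supply its own proof of this statement; it is quoted from \cite[Theorem 8.3.1]{Max} and closed with $\Box$. So there is nothing to compare against, and I assess your argument on its own. Part (i) is correct and standard.

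Your primary route for part (ii), via Verdier duality, has a genuine gap in the generality of the paper, where $A$ is an arbitrary noetherian commutative ring of finite global dimension. The assertion that $\cD_X$ swaps ${^pD}^{\le 0}(X)$ and ${^pD}^{\ge 0}(X)$ is \emph{false} when $A$ is not a field. From $k_x^*\cD_X\cF^{\centerdot}\simeq \cD_{\rm pt}(k_x^!\cF^{\centerdot})$ and the universal coefficient theorem, $H^j(k_x^*\cD_X\cF^{\centerdot})$ receives an ${\rm Ext}$-contribution from $H^{-j+1}(k_x^!\cF^{\centerdot})$ in addition to the ${\rm Hom}$-term from $H^{-j}(k_x^!\cF^{\centerdot})$, so ${\rm supp}^j(\cD_X\cF^{\centerdot})$ and ${\rm cosupp}^{-j}(\cF^{\centerdot})$ need not agree. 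This is exactly why the paper introduces the separate notion of \emph{strongly perverse} in Definition~\ref{sper}(c); Remark~\ref{field}(b) isolates the field case as the only one where your equivalence holds, and Proposition~\ref{pid} quantifies the discrepancy over a PID. For a concrete counterexample over $A=\bZ$, take $X$ a point and $\cF^{\centerdot}=\bZ/2$ concentrated in degree $0$: then $\cF^{\centerdot}\in{^pD}^{\ge 0}(X)$ but $\cD_X\cF^{\centerdot}\cong\bZ/2[-1]\notin{^pD}^{\le 0}(X)$.

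Your alternative direct approach is the correct one, and it works uniformly in $A$. The substantive input you need is that the costalk cohomology $H^j(k_x^!\cF^{\centerdot})$ is locally constant as $x$ varies in a fixed stratum $S\in\cW$. Factoring $k_x=i_S\circ k'_x$ with $i_S:S\hookrightarrow X$ and $k'_x:\{x\}\hookrightarrow S$, one uses that $i_S^!\cF^{\centerdot}$ has locally constant cohomology sheaves on $S$ (this is the nontrivial consequence of $\cW$-constructibility, via Thom--Mather topological local triviality along Whitney strata), together with the smooth purity $(k'_x)^!\simeq (k'_x)^*[-2\dim S]$ on the complex manifold $S$. Once this is in place, ${\rm cosupp}^j(\cF^{\centerdot})$ is a union of closures of strata and the dimension comparison from part (i) carries over verbatim. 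Your sketch conflates these two ingredients (local constancy of $i_S^!$ versus the shift coming from $(k'_x)^!$) and should separate them to be a proof.
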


\begin{remark}\label{field} \
\noindent(a) If $S \in \cW$ is a stratum of $X$ with inclusion map $k_S:S\hookrightarrow X$, the above costalk vanishing condition for $S$ is equivalent to: 
\be\label{coss} \cH^j(k_S^!\cF^{\centerdot})=0, \ \ {\rm for \ all} \  j<-\dim S.\ee
\noindent(b) If $A$ is a field, the notions of perverse sheaf and strongly perverse sheaf coincide. Indeed, in this case, the universal coefficient theorem yields that $\cF^{\centerdot} \in {^pD}^{\geq 0}(X)$ if and only if $\mathcal{D}_X\cF^{\centerdot} \in {^pD}^{\leq 0}(X)$.
\end{remark}

We conclude this section with the following. 
\begin{proposition}\label{pid}
Assume that the ring $A$ is a principal ideal domain (e.g., $A=\bZ$). If $\cF^{\centerdot} \in D^b_c(X)$ is constructible with respect to a Whitney stratification $\cW$ of $X$, then $\mathcal{D}_X\cF^{\centerdot} \in {^pD}^{\leq 0}(X)$ if and only if the following two conditions are satisfied:
\begin{itemize}
\item[(i)] $\cF^{\centerdot} \in {^pD}^{\geq 0}(X)$;
\item[(ii)] for any stratum $S \in \cW$ and any $x \in S$, the costalk cohomology $H^{\dim S}(k_x^!\cF^{\centerdot})$ is free.
\end{itemize}
In particular, $\cF^{\centerdot}$ is strongly perverse (with respect to $\cW$) if and only if $\cF^{\centerdot}$ is perverse and property (ii) above holds (i.e., costalks of $\cF^{\centerdot}$  in the lowest possible degree are free on each stratum).
\end{proposition}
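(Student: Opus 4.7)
The plan is to translate the dual perversity condition at each point into a universal coefficient computation. Given $x \in S \subset X$ with $S$ a stratum, Verdier duality yields
\[
k_x^* \mathcal{D}_X \cF^{\centerdot} \;\cong\; \mathcal{D}_{\mathrm{pt}}(k_x^! \cF^{\centerdot}) \;=\; \mathrm{RHom}_A(k_x^! \cF^{\centerdot}, A),
\]
and constructibility of $\cF^{\centerdot}$ forces $k_x^! \cF^{\centerdot}$ to have finitely generated cohomology. Since $A$ is a PID, so of global dimension $\leq 1$, the universal coefficient spectral sequence collapses into short exact sequences
\begin{equation*}
0 \to \mathrm{Ext}^1_A\!\bigl(H^{1-j}(k_x^!\cF^{\centerdot}), A\bigr) \to H^j(k_x^* \mathcal{D}_X \cF^{\centerdot}) \to \mathrm{Hom}_A\!\bigl(H^{-j}(k_x^!\cF^{\centerdot}), A\bigr) \to 0
\end{equation*}
for each $j \in \bZ$. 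By the stalk vanishing characterization of the perverse $t$-structure stated above, $\mathcal{D}_X \cF^{\centerdot} \in {}^p D^{\leq 0}(X)$ amounts to the middle term vanishing for every stratum $S$, every $x \in S$, and every $j > -\dim S$.

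For the ``if'' direction I would feed (i) and (ii) into these sequences. Condition (i), written in the costalk form $H^k(k_x^!\cF^{\centerdot}) = 0$ for $k < \dim S$, kills the $\mathrm{Hom}$-term whenever $-j < \dim S$ and the $\mathrm{Ext}^1$-term whenever $1-j < \dim S$. The only index at which the argument is not automatic is the critical value $j = 1 - \dim S$, where the surviving contribution is $\mathrm{Ext}^1_A\!\bigl(H^{\dim S}(k_x^!\cF^{\centerdot}), A\bigr)$; this vanishes by (ii), because a finitely generated free module over a PID has no $\mathrm{Ext}^1$ into $A$.

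For the converse, I would apply the structure theorem for finitely generated modules over a PID: $\mathrm{Hom}_A(M,A) = 0$ iff $M$ is torsion, and $\mathrm{Ext}^1_A(M,A) = 0$ iff $M$ is torsion-free, equivalently free. Given that $H^j(k_x^* \mathcal{D}_X \cF^{\centerdot}) = 0$ for all $j > -\dim S$, both outer terms of every such sequence must vanish. For any $k < \dim S$, taking $j = -k$ shows $H^k(k_x^!\cF^{\centerdot})$ is torsion, while taking $j = 1-k$ (still satisfying $j > -\dim S$) shows it is free; hence $H^k(k_x^!\cF^{\centerdot}) = 0$, which is (i). Taking $j = 1 - \dim S$ then isolates $H^{\dim S}(k_x^!\cF^{\centerdot})$ and forces it to be free, which is (ii). The last sentence of the proposition follows immediately from the definition of strongly perverse.

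The only real delicacy I expect is the index bookkeeping around the critical degree $k = \dim S$, at which a single $\mathrm{Ext}^1$ term records the entire obstruction to dual perversity beyond ordinary perversity; away from this degree everything is forced by the costalk conditions alone, and the rest is routine unpacking of Verdier duality together with the universal coefficient theorem over a PID.
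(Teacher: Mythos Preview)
Your proof is correct and follows essentially the same route as the paper: both use the Verdier duality identity $k_x^*\mathcal{D}_X\cF^{\centerdot}\cong \mathcal{D}_{\mathrm{pt}}(k_x^!\cF^{\centerdot})$ together with the universal coefficient theorem over a PID to reduce the dual perversity condition to statements about $\mathrm{Hom}$ and $\mathrm{Ext}^1$ of the costalk cohomology. The only cosmetic difference is that the paper invokes the (non-canonical) splitting $H^j(k_x^*\mathcal{D}_X\cF^{\centerdot})\cong \mathrm{Hom}(H^{-j}(k_x^!\cF^{\centerdot}),A)\oplus \mathrm{Ext}(H^{-j+1}(k_x^!\cF^{\centerdot}),A)$ and leaves the index chase to the reader, whereas you work with the short exact sequence and carry out the bookkeeping explicitly, including the observation that the sole obstruction beyond ordinary perversity is the $\mathrm{Ext}^1$ at $j=1-\dim S$.
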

\begin{proof} Let $S \in \cW$ and $x \in S$, with inclusion $k_x:\{x\} \hookrightarrow X$.
Properties of the dualizing functor and the universal coefficient theorem yield:
$$H^j(k_x^*\mathcal{D}_X\cF^{\centerdot})\cong H^j(\mathcal{D}_Xk_x^!\cF^{\centerdot})
\cong {\rm Hom}(H^{-j}(k_x^!\cF^{\centerdot}), A) \oplus {\rm Ext}(H^{-j+1}(k_x^!\cF^{\centerdot}), A).$$
The desired equivalence can now be checked easily.
\end{proof}


\subsection{Rectified homological depth}\label{rect}
In this section we recall the notion of rectified homological depth, and indicate its relation to (strongly) perverse sheaves. 

Let $X$ be a complex analytic space of complex pure dimension. Following \cite[Definition 6.0.4]{Sc}, we make the following.
\begin{definition}\label{defrhd} The {\it rectified homological depth} $\r(X,A)$ of $X$ with respect to the commutative base ring $A$ is $\geq d$ (for some $d \in \bZ$) if 
\be\label{rhdd}
\mathcal{D}_X(\underline{A}_X [d]) \in {^pD}^{\leq 0}(X).
\ee
\end{definition}
As pointed out in \cite{Sc}, the above definition agrees with the notion of rectified homological depth introduced by Hamm and L\^e \cite{HL} in more geometric terms.
\begin{example}\label{ex27}
\begin{itemize}
\item[(a)] One always has $\r(X,A) \leq \dim(X)$, and $\r(X,A) = \dim(X)$ if $X$ is smooth and nonempty.
\item[(b)] If $X$ is a pure-dimensional local complete intersection, then $\r(X,A)=\dim X$ (cf. \cite[Example 6.0.11]{Sc}).
\end{itemize}
\end{example}
In view of Example \ref{e22}(a) and Definition \ref{sper}(c), we have the following equivalence (see also \cite[(6.14)]{Sc}):
\begin{proposition}\label{p28}
For any nonempty pure-dimensional complex analytic space $X$, we have:
\begin{center}
$rHd(X,A)=\dim X$ $\iff$ $\underline{A}_X [\dim X]$ is strongly perverse.
\end{center}
\fin
\end{proposition}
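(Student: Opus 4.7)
The proof is essentially a direct unraveling of the definitions, together with two facts already recorded in the excerpt. The plan is to observe that each of the two conditions (``$\r(X,A)=\dim X$'' and ``$\underline{A}_X[\dim X]$ is strongly perverse'') is equivalent to the single sheaf-theoretic statement $\mathcal{D}_X(\underline{A}_X[\dim X]) \in {^pD}^{\leq 0}(X)$.

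First, I would translate the left-hand side. By Definition \ref{defrhd}, the condition $\r(X,A) \geq \dim X$ is equivalent to $\mathcal{D}_X(\underline{A}_X[\dim X]) \in {^pD}^{\leq 0}(X)$. Combining this with the inequality $\r(X,A) \leq \dim X$ recorded in Example \ref{ex27}(a), one obtains
\[
\r(X,A)=\dim X \iff \mathcal{D}_X(\underline{A}_X[\dim X]) \in {^pD}^{\leq 0}(X).
\]

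Next, I would translate the right-hand side. By Definition \ref{sper}(c), $\underline{A}_X[\dim X]$ is strongly perverse exactly when both $\underline{A}_X[\dim X] \in {^pD}^{\leq 0}(X)$ and $\mathcal{D}_X(\underline{A}_X[\dim X]) \in {^pD}^{\leq 0}(X)$. The first of these two conditions is automatic by Example \ref{e22}(a), since $X$ is of pure dimension. Therefore strong perversity of $\underline{A}_X[\dim X]$ reduces to the single condition $\mathcal{D}_X(\underline{A}_X[\dim X]) \in {^pD}^{\leq 0}(X)$.

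Comparing the two equivalences proves the proposition. There is no real obstacle here: the statement is a formal consequence of the definition of rectified homological depth, the definition of strongly perverse, and the standard fact that the shifted constant sheaf on a pure-dimensional complex analytic space lies in ${^pD}^{\leq 0}$. The only thing to be careful about is to cite Example \ref{ex27}(a) to upgrade the inequality $\r(X,A) \geq \dim X$ to equality, which is why the proof is literally a one-line concatenation of quoted results.
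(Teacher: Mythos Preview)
Your proof is correct and follows exactly the approach the paper indicates: the paper itself offers no detailed argument, merely citing Example \ref{e22}(a) and Definition \ref{sper}(c) before stating the proposition. You have simply spelled out the same reasoning, with the welcome addition of invoking Example \ref{ex27}(a) to upgrade $\r(X,A)\geq\dim X$ to equality.
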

As a consequence, Remark \ref{field}(b) and Proposition \ref{pid} yield the following.
\begin{corollary}\label{cfield}
Let $X$ be a nonempty pure-dimensional complex analytic space with a Whitney stratification $\cW$.
\begin{itemize}
\item[(a)] If $A$ is a field, then:
\begin{center}
$rHd(X,A)=\dim X$ $\iff$ $\underline{A}_X [\dim X]$ is perverse.
\end{center}
\item[(b)] If $A$ is a principal ideal domain, then $rHd(X,A)=\dim X$ if and only if the following two conditions are satisfied:
\begin{itemize}
\item[(i)] $\underline{A}_X [\dim X]$ is perverse.
\item[(ii)] for any stratum $S \in \cW$ and any $x \in S$ with $k_x:\{x\} \hookrightarrow X$, the costalk cohomology $H^{\dim S}(k_x^!\underline{A}_X[\dim X])$ is free.
\end{itemize}
\end{itemize}
\fin
\end{corollary}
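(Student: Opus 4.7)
The plan is to obtain this corollary as an immediate assembly of the three preceding results, specialized to the constant sheaf $\cF^{\centerdot}=\underline{A}_X[\dim X]$; no new geometric input is required.

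First I would invoke Proposition \ref{p28}, which already translates the left-hand side of both equivalences into a single statement: $\r(X,A)=\dim X$ holds if and only if $\underline{A}_X[\dim X]$ is strongly perverse on $X$. Thus both (a) and (b) are reduced to characterizing strong perversity of the (shifted) constant sheaf in terms of ordinary perversity together with possible extra freeness conditions on costalks.

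For part (a), I would appeal to Remark \ref{field}(b): when $A$ is a field, the universal coefficient theorem forces $\cF^{\centerdot}\in {}^pD^{\geq 0}(X)$ to be equivalent to $\mathcal{D}_X\cF^{\centerdot}\in {}^pD^{\leq 0}(X)$, so the notions of perverse and strongly perverse coincide. Applying this to $\underline{A}_X[\dim X]$ yields the equivalence in (a). For part (b), I would apply Proposition \ref{pid} to $\cF^{\centerdot}=\underline{A}_X[\dim X]$: it asserts that, for $A$ a PID, $\cF^{\centerdot}$ is strongly perverse with respect to $\cW$ if and only if $\cF^{\centerdot}$ is perverse and, for every stratum $S\in\cW$ and every $x\in S$, the costalk cohomology $H^{\dim S}(k_x^!\cF^{\centerdot})$ is free. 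Substituting $\underline{A}_X[\dim X]$ for $\cF^{\centerdot}$ gives precisely conditions (i) and (ii) of the statement, so the equivalence follows.

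There is essentially no obstacle here, just a bookkeeping check: I need to confirm that the ``lowest possible degree'' on a stratum $S$ is indeed $\dim S$ for $\cF^{\centerdot}=\underline{A}_X[\dim X]$, i.e.\ that the perversity half of the statement already forces $H^{j}(k_x^!\cF^{\centerdot})=0$ for $j<\dim S$ (cf.\ Remark \ref{field}(a) and the costalk vanishing theorem), so that condition (ii) is exactly the freeness of the first possibly nonzero costalk. Once this is noted, assembling Proposition \ref{p28}, Remark \ref{field}(b) and Proposition \ref{pid} in the order above completes the proof of both (a) and (b).
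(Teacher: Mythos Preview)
Your proposal is correct and follows exactly the same route as the paper: the corollary is stated there as an immediate consequence of Proposition~\ref{p28} combined with Remark~\ref{field}(b) (for the field case) and Proposition~\ref{pid} (for the PID case), with no additional argument given.
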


 \subsection{Nearby and vanishing cycle functors} \label{nv} \ 
Let $n \geq 1$, and consider a nonconstant holomorphic function germ $f:(X,0) \to (\bC,0)$ defined on a pure $(n+1)$-dimensional complex singularity germ $(X,0)$ contained in some ambient $(\bC^N,0)$. Assume as in \cite{Ti-nonisol} that \be \r(X,\bZ)=n+1,\ee with $\r(X,\bZ)$ denoting the {\it rectified homological depth} of $X$ with respect to the ring $\bZ$ 
(cf. Definition \ref{defrhd}). As seen in Example \ref{ex27}(b), a local complete intersection  $X$ of pure complex dimension $n+1$ satisfies this property. By Proposition \ref{p28}, our assumption on $\r(X,\bZ)$ is equivalent to the condition that the shifted constant sheaf $\underline{\bZ}_X[n+1]$ is a {\it strongly perverse sheaf} on $X$, which by Corollary \ref{cfield} means that: $(i)$ the shifted constant sheaf $\underline{\bZ}_X[n+1]$ is a perverse sheaf on $X$ in the usual sense, and $(ii)$  on each stratum, the costalks of $\underline{\bZ}_X[n+1]$ in the lowest possible degree are free abelian. (Moreover, the weaker condition $\r(X,\bQ)=n+1$ is equivalent to the fact that $\underline{\bQ}_X[n+1]$ is a $\bQ$-perverse sheaf on $X$, which is the hypothesis considered in \cite{DS}.)

Since $Y:=f^{-1}(0)$ is a principal divisor on $X$, it follows from \cite[Example 6.0.11]{Sc} that $\r(Y,\bZ)=n$, and hence 
$\underline{\bZ}_Y[n]$ is a strongly perverse sheaf on $Y$ in the above sense.

We denote by $\psi_f, \varphi_f$ the nearby and vanishing cycle functors associated to $f$ (e.g., see \cite{Di, Max} for a quick introduction). Recall that if $u: Y=f^{-1}(0) \hookrightarrow X$ is the inclusion map, there is a distinguished triangle of functors
\be\label{uno} u^* \to \psi_f \to \varphi_f \overset{[1]}{\to} \ee
Moreover, the nearby and vanishing cycle complexes $\psi_f\underline{\bZ}_X$ and $\varphi_f\underline{\bZ}_X$ encode the Milnor fiber cohomology at points along $Y$, in the sense that
\be\label{i62} H^k(F_x) \cong \cH^k(\psi_f\underline{\bZ}_X)_x  \ \ \ \text{and} \ \ \ \widetilde{H}^k(F_x) \cong \cH^k(\varphi_f\underline{\bZ}_X)_x,
\ee
with $F_x$ denoting the Milnor fiber of $f$ at $x \in Y$, and where $\cH^*(-)_x$ computes the stalk cohomology at $x$. In particular, the vanishing cycle complex $\varphi_f\underline{\bZ}_X$ is supported on the stratified singular locus ${\rm Sing}_{\cW}(f)$ of $f$ with respect to a fixed Whitney stratification $\cW$ of $X$, which is contained in $Y$ as a set germ at the origin.

It is also known that the shifted functors $^p\psi_f:=\psi_f[-1]$ and $^p\varphi_f:=\varphi_f[-1]$
preserve strongly perverse sheaves (e.g., see \cite[Theorem 6.0.2, Remark 6.0.6]{Sc}), so under our assumptions the shifted nearby and vanishing cycle complexes $\psi_f\underline{\bZ}_X[n]$ and, resp., $\varphi_f\underline{\bZ}_X[n]$ are strongly perverse sheaves on $Y$. In particular, these perverse sheaves have free costalks in the lowest possible degree. We refer to $^p\psi_f$, $^p\varphi_f$ as the perverse nearby and vanishing cycle functors.

If we work with $\bC$-coefficients, let us denote by $h$ the monodromy of $\varphi_f\underline{\bC}_X$, with its Jordan decomposition $h=h_uh_s$, where $h_s$ is semi-simple (and locally of finite order) and $h_u$ is unipotent.
For any $\lambda \in \bC$, we set $$\psi_{f,\lambda}\underline{\bC}_X:=\ker (h_s - \lambda) \subset \psi_f\underline{\bC}_X$$ and similarly for $\varphi_{f,\lambda}\underline{\bC}_X$, in the category of shifted perverse sheaves. It follows from the definition of vanishing cycles that 
$\psi_{f,\lambda}\underline{\bC}_X=\varphi_{f,\lambda}\underline{\bC}_X$ for $\lambda \neq 1$. Moreover, we have decompositions
$$\psi_{f}\underline{\bC}_X =\bigoplus_{\lambda} \psi_{f,\lambda}\underline{\bC}_X, \ \ 
\varphi_{f}\underline{\bC}_X =\bigoplus_{\lambda} \varphi_{f,\lambda}\underline{\bC}_X,$$
and for $x \in Y$ we have isomorphisms
\be \label{eq:defvanish}
H^k(F_x;\bC)_\lambda \cong \cH^k(\psi_{f,\lambda}\underline{\bC}_X)_x \ \ \ \text{and} \ \ \ 
 \widetilde{H}^k(F_x;\bC)_{\lambda} \cong \cH^k(\varphi_{f,\lambda}\underline{\bC}_X)_x,
 \ee
with ${H}^k(F_x;\bC)_{\lambda}$ the corresponding $\lambda$-eigenspace of the action of the semi-simple part of the Milnor monodromy on ${H}^k(F_x;\bC)$.


\section{Milnor fiber cohomology via perverse vanishing cycles}\label{perv}
In this section, we investigate the cohomology of the Milnor fiber 
as a consequence of the fact that, up to a shift, the nearby and vanishing cycle functors preserve perverse sheaves. In \S\ref{bound} we  show that each reduced cohomology group of the Milnor fiber, except the top two, can be computed from the restriction of the vanishing cycle complex to only singular strata with a certain lower bound in dimension. The same method applies to the study of the monodromy eigenspaces of Milnor fiber cohomology; this is discussed in \S\ref{eigsh}. In \S\ref{Mm}, we discuss divisibility results for the characteristic polynomials of Milnor monodromy, and upper bounds for the maximal size of Jordan blocks.

  
 \subsection{Betti bounds via perverse vanishing cycles}\label{bound} \ 
 In this section, we derive information on the Milnor fiber cohomology as a consequence of the perversity of vanishing cycles (compare also with \cite{DS}).
 
Under the notations and assumptions of \S\ref{nv}\footnote{In fact, in Section \ref{bound} we use only the perversity in the usual sense of $\underline{\bZ}_X[n+1]$, except for part (d) of Theorem \ref{c14a} where the strong perversity assumption is needed.}, we consider the stratified singular locus $\Sigma:={\rm Sing}_{\cW}(f)$, which is  defined as the union of the singular 
loci $\Sing f_{|W_i}$ of the restrictions of $f$ to the strata  $W_i$ of fixed Whitney stratification $\cW$. 
Let us remark that $\Sigma$  is a closed set  and a subset of $Y$. We may and will assume in the following that 
it is a union of strata in $\cW$. Let $s:= \dim_0 \Sigma.$

Let $B_\epsilon$ be a Milnor ball for $f$ at the origin, that is, the intersection of a small enough ball at the origin of the ambient space $\bC^N$ with a suitable representative of the germ $(X, 0)$. Let $$F:=B_\epsilon \cap f^{-1}(\gamma),  \ \ 0<\gamma \ll \epsilon,$$ 
be the Milnor fiber of $f$ at the origin.
 
Assume $0<s<n$ and let 
$$\cFb:={^p\varphi_f}(\underline{\bZ}_X[n+1]),$$
where $^p\varphi_f:=\varphi_f[-1]$ denotes as in \S\ref{nv} the perverse vanishing cycle functor associated to $f$.
 Since $\underline{\bZ}_X[n+1]$ was assumed to be a perverse sheaf on $X$, we get that $\cFb$ is perverse on $Y=f^{-1}(0)$. Moreover, since $\cFb$ is supported on $\Sigma$, we obtain that $$\cFb_0:=\cFb\vert_\Sigma$$ is a perverse sheaf on $\Sigma$ (e.g., see \cite[Corollary 8.2.10]{Max}). 
For any integer $k$ we have:
\be\label{ad1}\widetilde{H}^k(F) \cong \cH^{k-n}(\cFb)_0 \cong \cH^{k-n}(\cFb_0)_0.\ee
The support condition for the perverse sheaf $\cFb_0$ on $\Sigma$ then yields (e.g., as in \cite[Proposition 10.6.2]{Max}) that the only possibly non-trivial integral reduced cohomology $\widetilde{H}^k(F)$ of $F$ is concentrated in degrees $n-s \leq k \leq n$. 

The computation of $\widetilde{H}^k(F)$ is equivalent via \eqref{ad1} to the computation of  the hypercohomology group $\bH^{k-n}(B_\epsilon \cap \Sigma;\cFb_0)$, where $B_\epsilon$ is as above a Milnor ball for $f$ at the origin. For convenience of notation, we will follow the convention mentioned in the introduction, namely {\it we will assume throughout the paper that we work in a sufficiently small open ball $B_\epsilon$ around the origin, and replace $B_\epsilon \cap \Sigma$ with simply $\Sigma$, $B_\epsilon \cap Y$ with $Y$, $B_\epsilon$ with $X$, etc.} In particular, we shall write
\be\label{ad1e}
\widetilde{H}^k(F) \cong \bH^{k-n}(\Sigma;\cFb_0),
\ee
with $\bH^*$ denoting the hypercohomology functor.

Denote by $\cS$ a Whitney stratification of $Y$ so that $\cFb$ is $\cS$-constructible. Upon refining $\cW$, we can assume that any stratum in $\cS$ is also a stratum in $\cW$. Since we work in a neighborhood of the origin, we can further assume (after shrinking $X$, and restricting to a normal slice through the origin as in Example \ref{rem010} below) that the origin is the only zero-dimensional stratum of $\cS$ and $\Sigma$ is the union of strata of complex dimension $\leq s$. Let us make the following notations:
\begin{itemize}
\item[] $\Sigma_{\ell}=$ the union of $\ell$-dimensional strata of $\cS$ (so $\Sigma_0=\{0\}$);
\item[] $U_{\ell}=$ the union of strata of $\cS$ of complex dimension $\geq \ell$.
\end{itemize}
Then each $U_{\ell}$ is an open subset of $\Sigma$, and 
\be\label{of}\Sigma_s=U_s \subseteq U_{s-1} \subseteq \cdots \subseteq U_0=\Sigma\ee
with $\Sigma_{\ell} = U_{\ell} \setminus U_{\ell+1}$. 
Set $$\cFb_\ell:=\cFb_0\vert_{U_\ell}$$ and note that $\cFb_\ell$ is a perverse sheaf of $U_\ell$. 

We are now ready to prove the following result, which can be seen as an enhancement of results from \cite{DS}. However, note that while \cite{DS} works with $\bQ$-coefficients and the restriction of the vanishing cycle complex to the real link (see Remark \ref{dif}(b) below), our result holds over the integers and is formulated in terms of the germs of strata of the stratified singular locus of $f$.
\bt\label{t14a}
Let $f:(X,0) \to (\bC,0)$ be a nonconstant holomorphic function defined on a pure $(n+1)$-dimensional complex singularity germ satisfying the property that  
$\underline{\bZ}_X[n+1]$ is a $\bZ$-perverse sheaf on $X$.
Assume that the stratified singular locus $\Sigma$ of $f$ is of complex dimension $s>0$. If $F$ denotes the Milnor fiber of $f$ at the origin, then using the above notations for the stratification of $\Sigma$, the following hold:
\begin{itemize}
\item[(a)]  for any $j=0,\ldots, s-1$ there is a monomorphism
\be\label{newthad}
\widetilde{H}^{n-s+j}(F) \hookrightarrow \bH^{-s+j}(U_{s-j};\cFb_{s-j}).
\ee
\item[(b)]  if $s\geq 2$, then for any $j=0,\ldots,s-2$ there is an isomorphism
\be\label{newthad2}
\widetilde{H}^{n-s+j}(F) \cong \bH^{-s+j}(U_{s-j-1};\cFb_{s-j-1}).
\ee
\end{itemize}
\et

\begin{proof} As already mentioned above, we assume that the singularity germ $X$ is represented by its intersection with a sufficiently small open ball $B_\epsilon$ at the origin of $\bC^N$.

(a) \  Fix an integer $j=0,\ldots, s-1$.  In the notations preceding the theorem, 
for any $0 \leq \ell \leq s-j-1$ consider the inclusions
$$\Sigma_\ell \overset{v_\ell}{\hookrightarrow} U_\ell \overset{u_\ell}{\hookleftarrow} U_{\ell+1}$$
and the attaching distinguished triangle 
\be\label{stl}{v_\ell}_!{v_\ell}^!\cFb_\ell \to \cFb_\ell \to R{u_\ell}_*{u_\ell}^*\cFb_\ell \overset{[1]}{\to}\ee
with ${v_\ell}_!={v_\ell}_*$ and ${u_\ell}^*\cFb_\ell \cong \cFb_{\ell+1}$. The hypercohomology long exact sequence associated to \eqref{stl} contains the terms
\be\label{stl2} \cdots \to \bH^{-s+j}(\Sigma_\ell; {v_\ell}^!\cFb_\ell) \to \bH^{-s+j}(U_\ell;\cFb_\ell) \to   \bH^{-s+j}(U_{\ell+1};\cFb_{\ell+1}) \to \cdots\ee
The group $\bH^{-s+j}(\Sigma_\ell; {v_\ell}^!\cFb_\ell)$ is computed by a hypercohomology spectral sequence whose $E_2$-term is given by
\be\label{spsup} E_2^{p,q}=H^p(\Sigma_\ell; \cH^q({v_\ell}^!\cFb_\ell)).\ee
Since ${v_\ell}^!\cFb_\ell$ is a constructible complex on $\Sigma_\ell$, its cohomology sheaves are local systems on every connected component of $\Sigma_\ell$. Hence, by reasons of dimension, 
$E_2^{p,q}=0$ if $p<0$ or $p>2\ell$. On the other hand, the costalk condition \eqref{coss} for the perverse sheaf $\cFb_\ell$ on $U_\ell$ (with the induced stratification) yields that $\cH^q({v_\ell}^!\cFb_\ell) \cong 0$ for all $q < -\ell$. Therefore, $E_2^{p,q}=0$ if $q<-\ell$. 
Altogether, since $-s+j<-\ell$, we get that $E_2^{p,q}=0$ for any pair of integers $(p,q)$ with $p+q=-s+j$. The spectral sequence \eqref{spsup} then implies that 
\be\label{stl3}\bH^{-s+j}(\Sigma_\ell; {v_\ell}^!\cFb_\ell)\cong 0.\ee
By combining \eqref{stl2} and \eqref{stl3}, we get a monomorphism
\be \bH^{-s+j}(U_\ell;\cFb_\ell) \hookrightarrow   \bH^{-s+j}(U_{\ell+1};\cFb_{\ell+1})\ee for any $0 \leq \ell \leq s-j-1$.
Together with \eqref{ad1e} and noting that $\Sigma=U_0$, the above discussion yields a composition of monomorphisms
$$\widetilde{H}^{n-s+j}(F) \cong \bH^{-s+j}(U_0;\cFb_0) \hookrightarrow  \bH^{-s+j}(U_1;\cFb_1) \hookrightarrow \cdots \hookrightarrow  \bH^{-s+j}(U_{s-j};\cFb_{s-j}),$$
thus completing the proof of \eqref{newthad}.

 (b) \  Let us now assume that $s \geq 2$ and fix an integer $j=1,\ldots,s-1$. 

In view of \eqref{stl3}, the long exact sequence \eqref{stl2} contains the terms:
\be\label{stl2b} \cdots \to \bH^{-s+j-1}(\Sigma_\ell; {v_\ell}^!\cFb_\ell) \to \bH^{-s+j-1}(U_\ell;\cFb_\ell) \to   \bH^{-s+j-1}(U_{\ell+1};\cFb_{\ell+1}) \to 0 \to \cdots\ee
For any $0 \leq \ell \leq s-j-1$, the same arguments used for studying the spectral sequence \eqref{spsup} yield that $\bH^{-s+j-1}(\Sigma_\ell; {v_\ell}^!\cFb_\ell) \cong 0$ since $-s+j-1<-\ell$. In particular, \eqref{stl2b} yields isomorphisms
$$\bH^{-s+j-1}(U_\ell;\cFb_\ell) \cong   \bH^{-s+j-1}(U_{\ell+1};\cFb_{\ell+1}) $$
for all $0 \leq \ell \leq s-j-1$. Together with \eqref{ad1e}, this then yields isomorphisms
\be\label{rei}\widetilde{H}^{n-s+j-1}(F) \cong \bH^{-s+j-1}(U_0;\cFb_0) \cong  \bH^{-s+j-1}(U_1;\cFb_1) \cong \cdots \cong  \bH^{-s+j-1}(U_{s-j};\cFb_{s-j}).\ee
The isomorphism \eqref{newthad2} is then obtained by reindexing (i.e., replacing $j$ by $j+1$ in \eqref{rei}).
\end{proof}

An immediate consequence of Theorem \ref{t14a}(b) is the following.  
\begin{corollary}\label{c2.2}
If $s \geq 2$, then for any $j=0,\ldots, s-2$, the group $\widetilde{H}^{n-s+j}(F;\bZ)$ depends only on the singular strata of dimension $\geq s-j-1$ of $\Sing_\cW(f)$.
\end{corollary}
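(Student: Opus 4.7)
The plan is to read off the conclusion directly from Theorem \ref{t14a}(b), so no new argument is really needed; what requires care is the interpretation of the right-hand side of the isomorphism \eqref{newthad2}.

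For a fixed $j \in \{0,\ldots,s-2\}$, the isomorphism \eqref{newthad2} supplies
$$\widetilde{H}^{n-s+j}(F) \;\cong\; \bH^{-s+j}\bigl(U_{s-j-1};\, \cFb_{s-j-1}\bigr).$$
Thus the task reduces to observing that the right-hand side is intrinsically determined by the open subset $U_{s-j-1}\subseteq \Sigma$ together with the restriction $\cFb|_{U_{s-j-1}}$. By the very definition of $U_{s-j-1}$ and $\cFb_{s-j-1}$ given just before Theorem \ref{t14a}, namely $U_{s-j-1}=\bigcup_{\ell\geq s-j-1}\Sigma_\ell$ and $\cFb_{s-j-1}=\cFb_0|_{U_{s-j-1}}$, this right-hand side is computed from the strata of dimension $\geq s-j-1$ and the restriction of the perverse vanishing cycle complex to them, with no input whatsoever from strata of dimension $< s-j-1$.

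There is no substantive obstacle; the content is entirely packaged in Theorem \ref{t14a}(b). The only points worth checking explicitly are (i) the index shift: as $j$ ranges over $\{0,\ldots,s-2\}$, the dimension bound $s-j-1$ ranges over $\{1,\ldots,s-1\}$, so the top group covered by the corollary is $\widetilde{H}^{n-2}(F)$, consistent with the statement of Theorem \ref{t14a} which leaves the top two groups $\widetilde{H}^{n-1}(F)$ and $\widetilde{H}^{n}(F)$ out of the scope of the isomorphism; and (ii) that ``depends only on'' should be read in the strong sense that one has an actual canonical isomorphism with an invariant of $(U_{s-j-1},\cFb|_{U_{s-j-1}})$, not merely an abstract one. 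Both points are immediate from the construction in the proof of Theorem \ref{t14a}(b), which proceeds by successively killing contributions from lower strata via the vanishing \eqref{stl3}.
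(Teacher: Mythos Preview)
Your proposal is correct and matches the paper's approach: the paper states the corollary as an immediate consequence of Theorem \ref{t14a}(b) without further proof, and your argument simply unpacks why the right-hand side of \eqref{newthad2} depends only on the strata of dimension $\geq s-j-1$. There is nothing to add.
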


\begin{remark}\label{dif} \ \\
(a) Assuming $s\geq 2$ and fixing  $j=0,\ldots, s-2$, if there are no strata of dimension $s-j-1$, then $U_{s-j}=U_{s-j-1}$, so in this case \eqref{newthad2} is a finer result than \eqref{newthad}. In general, the right-hand side of either \eqref{newthad} or \eqref{newthad2} can be computed via the hypercohomology spectral sequence, though explicit computations can be tedious.\\
(b) The results of Theorem \ref{t14a} and Corollary \ref{c2.2} show that $\widetilde{H}^{j}(F)$ for $j<n-1$ (resp., $j=n-1$) is {completely} (resp., partially) determined by the restriction of the vanishing cycle complex $\varphi_f\underline{\bZ}_X$ to the complement of the singular point at the origin. A similar statement, though not as explicit as Corollary \ref{c2.2},  can be derived from  \cite[Theorem 0.1]{DS}, where one considers the restriction of the vanishing cycle complex to the real link of the singularity. Specifically, if $K$ denotes the real link of $0 \in Y=f^{-1}(0)$, i.e., the intersection of $Y$ with a sufficiently small sphere around $0$ in a smooth ambient space $\bC^N$, then under our assumptions and notations one gets isomorphisms
\be\label{ds1} \widetilde{H}^k(F;\bZ) \cong \bH^k(K;\varphi_f \underline{\bZ}_X\vert_{K}) 
\ee
for all  $k<n-1$,
and a monomorphism
\be\label{ds2} \widetilde{H}^{n-1}(F;\bZ) \hookrightarrow \bH^{n-1}(K;\varphi_f \underline{\bZ}_X\vert_{K}).\ee
For the benefit of the reader, we include here the elementary proof of \eqref{ds1} and \eqref{ds2}. Recall that $X$ is represented by its intersection with a Milnor ball $B_\epsilon$ at the origin. Denote by 
$i:\{0\} \hookrightarrow Y$ and $j:Y \setminus \{0\} \hookrightarrow Y$ the inclusion maps. Let $\cFb$ be a $\bZ$-perverse sheaf on $Y$. The costalk conditions for $\cFb$ yields the vanishing:
\begin{itemize}
\item[(i)] $H^k(i^!\cFb) = 0$ if $k<0$.
\end{itemize}
Moreover, it is well known that
\begin{itemize}
\item[(ii)] $H^k(i^*Rj_*j^*\cFb) \cong \bH^k(K;\cFb\vert_{K})$, where $K$ is the real link of 
$0 \in Y$.
\end{itemize}
By applying the pullback $i^*$ to the attaching triangle 
$$ i_!i^!\cFb \to \cFb \to Rj_*j^* \cFb \overset{[1]}{\to}$$
and using the fact that $i^*i_!\simeq id$, one gets the distinguished triangle 
\be\label{dtads} 
i^!\cFb \to i^*\cFb \to i^*Rj_*j^* \cFb \overset{[1]}{\to}
\ee
In view of (i) and (ii), the long exact sequence of hypercohomology groups associated to \eqref{dtads} yields isomorphisms
\be\label{l1} H^k(i^*\cFb) \cong \bH^k(K;\cFb\vert_{K}) , \ \ \forall \ k<-1,
\ee
and a monomorphism
\be\label{l2} 
H^{-1}(i^*\cFb) \hookrightarrow \bH^{-1}(K;\cFb\vert_{K}).
\ee
To obtain \eqref{ds1} and \eqref{ds2}, one simply applies \eqref{l1} and \eqref{l2} to the perverse vanishing cycles $\cFb:={^p\varphi_f}(\underline{\bZ}_X[n+1])$.
It should be noted that Dimca and Saito worked in \cite{DS} with $\bQ$-coefficients, but as seen above their result extends easily to the integers. 

Finally, let us compare the above arguments with the statement and proof of our Theorem  \ref{t14a}. It is well known that complex analytic sets are {\it locally conelike} (see \cite{BV}), hence $U_1=\Sigma \setminus \{0\}$ is stratwise topologically equivalent with the product of the link of $0$ in $\Sigma$ (that is, $K \cap \Sigma$) with an open interval $(0,\epsilon)$. Therefore, in the special case $s-j=2$, the isomorphism \eqref{newthad2}  reproves \eqref{ds1} with $k=n-2$. In all other cases, our isomorphism \eqref{newthad2}  is strictly finer than \eqref{ds1}. Moreover, the monomorphism \eqref{newthad} specializes, after setting $s-j=1$, to \eqref{ds2}, while none of the other cases addressed by \eqref{newthad} (except at $j=0$) has a counterpart in \cite{DS}. This justifies our assertion that Theorem \ref{t14a} provides a new perspective and an enhancement of some of the results from \cite{DS}.
\\
(c) If $f$ is a homogeneous polynomial, a more refined dependence of the vanishing cohomology on the singular strata was obtained in \cite[Proposition 5.1]{Max1} (see also \cite[Theorem 3.1]{Li} for the case when $f$ defines a central hyperplane arrangement).
\end{remark}

\medskip

In what follows, we specialize Theorem \ref{t14a} to the case $j=0$, to derive more explicit information about $\widetilde{H}^{n-s}(F)$, that is, the lowest (possibly non-trivial) cohomology group of $F$ (compare also with \cite[Section 3.5]{DS} for a related discussion). Just like in Theorem \ref{t14a}, most arguments used in the proof of Theorem \ref{c14a} below use only the perversity in the usual sense of $\cFb$. But the statement about the freeness of 
$\widetilde{H}^{n-s}(F)$ (part (d) of Theorem \ref{c14a} ) requires the strong perversity of $\cFb$, deduced via Proposition \ref{p28} from the assumption $\r(X,\bZ)=n+1$.
We first introduce some notations.

Recall that $$U_s=\Sigma_s=\bigsqcup_i \Sigma_{s,i},$$
where $\Sigma_{s,i}$ are the $s$-dimensional (connected) strata of $\Sigma$. Denote by $F^\pitchfork_{s,i}$ the Milnor fiber of $f$ at a point $x_{s,i} \in \Sigma_{s,i}$. Let $N$ be a normal slice to the stratum $\Sigma_{s,i}$ at the point  $x_{s,i}$ (i.e., a smooth analytic subvariety of $\bC^N$, transversal to $\Sigma_{s,i}$ at $x_{s,i}$).
By the base change  formula for vanishing cycles (e.g., see \cite[Lemma 4.3.4]{Sc}), we get
\be\label{1000} \widetilde{H}^k(F^\pitchfork_{s,i}) \cong \cH^{k}(\varphi_f\underline{\bZ}_X)_{x_{s,i} } \cong \cH^{k}(\varphi_{f\vert_N} \underline{\bZ}_{X\cap N})_{x_{s,i} },\ee
i.e.,  $\widetilde{H}^k(F^\pitchfork_{s,i})$  can be identified with the $k$-th reduced cohomology of the Milnor fiber of the restriction $f\vert_N$ of $f$ to the normal slice $N$ to the stratum $\Sigma_{s,i}$. For this reason, we will simply refer to $F^\pitchfork_{s,i}$ as the {\it transversal Milnor fiber} of $f$ along $\Sigma_{s,i}$. Furthermore, by transversality, the function $f\vert_N$ has an {\it isolated} (stratified) singularity at $x_{s,i}$, and hence the point $x_{s,i}$ is an isolated point in the support of the 
perverse sheaf $\varphi_{f\vert_N} \underline{\bZ}_{X\cap N}[n-s]$. Therefore, the stalk 
cohomology of $\varphi_{f\vert_N} \underline{\bZ}_{X\cap N}[n-s]$ at $x_{s,i}$ 
is concentrated in degree $0$. 
This then implies that $\widetilde{H}^k(F^\pitchfork_{s,i})$ is trivial except possibly in degree $k=n-s$. 
Denote by $\mu^\pitchfork_{s,i}$ the rank of $\widetilde{H}^{n-s}(F^\pitchfork_{s,i})$.

For any $i$, the fundamental group $\pi_1(\Sigma_{s,i})$ of the stratum $\Sigma_{s,i}$ acts on $\widetilde{H}^{n-s}(F^\pitchfork_{s,i})$ via a  homomorphism 
\be\label{vm}
 A_i:\pi_1(\Sigma_{s,i}) \lra \Aut\big(\widetilde{H}^{n-s}(F^\pitchfork_{s,i})\big),
 \ee
which determines a local system $\mathcal{L}_{s,i}$ on $\Sigma_{s,i}$, with stalk $\widetilde{H}^{n-s}(F^\pitchfork_{s,i})$. 
We refer to $A_i$ as the {\it local system monodromy} along the stratum $\Sigma_{s,i}$.
If $\pi_1(\Sigma_{s,i}) \cong \bZ$ (e.g., if $s=1$), the homomorphism $A_i$ can be regarded as an automorphism $A_i: \widetilde{H}^{n-s}(F^\pitchfork_{s,i}) \to \widetilde{H}^{n-s}(F^\pitchfork_{s,i})$; in this case, it will be referred to as the {\it vertical monodromy} along $\Sigma_{s,i}$, by analogy with the case $s=1$ considered in \cite{Si3}.

Let us also denote by $\{\Sigma_{s-1,j}\}_j$ the collection of connected singular strata of dimension $s-1$, and for each $j$ we fix a point $x_j \in \Sigma_{s-1,j}$.

With the above notations and under the hypotheses of Theorem \ref{t14a}, we can now prove the following.\footnote{M. Saito communicated to us that parts (a)-(c) of Theorem \ref{c14a} can also be deduced from arguments {\it implicit}  in \cite{DS}.}
\begin{theorem}\label{c14a}
\begin{enumerate}
\item There is a monomorphism
\be\label{upb}
\widetilde{H}^{n-s}(F) \hookrightarrow \bigoplus_i \widetilde{H}^{n-s}(F^\pitchfork_{s,i})^{A_i},
\ee
where $F^\pitchfork_{s,i}$ is the transversal Milnor fiber to the $s$-dimensional stratum $\Sigma_{s,i}$, and $A_i$ is the local system monodromy along $\Sigma_{s,i}$.\footnote{Since we are interested only in  the $A_i$-invariant part of $\widetilde{H}^{n-s}(F^\pitchfork_{s,i})$, it is clear that this is equal to the intersection of the invariant submodules over some set of generators of $\pi_1(\Sigma_{s,i})$. See also \eqref{eq:hatA} and the conjecture after it.}

In particular, taking ranks yields the inequalities:
 \be\label{2.18}
\widetilde{b}_{n-s}(F) \leq \sum_i \rank \widetilde{H}^{n-s}(F^\pitchfork_{s,i})^{A_i} \leq \sum_i  \mu^\pitchfork_{s,i}.
 \ee
\item If, moreover, $s\geq 2$, then
 \be\label{lpb}
\widetilde{b}_{n-s}(F) \geq \sum_i  \rank \widetilde{H}^{n-s}(F^\pitchfork_{s,i})^{A_i} - \sum_j \rank H^{s-1}(i_{x_j}^!\cFb) ^{\pi_1(\Sigma_{s-1,j})},
 \ee
 where $x_j$ is some point in the $(s-1)$-dimensional stratum $\Sigma_{s-1,j}$. \\
\item If $s\geq 2$ and the germ of the singular locus $\Sigma$ at the origin has no strata of dimension $s-1$, then \eqref{upb} is an isomorphism and  the first inequality in \eqref{2.18} becomes an equality. 
\item If $\r(X,\bZ)=n+1$, then $\widetilde{H}^{n-s}(F)$ is free.

\end{enumerate}
\end{theorem}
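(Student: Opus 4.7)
My plan is to deduce the four parts by identifying the restriction of $\cFb$ to the top singular stratum and then comparing the resulting hypercohomology with $\widetilde H^{n-s}(F)$ through the monomorphism chain built in the proof of Theorem \ref{t14a}. The starting observation is that since $\cFb$ is perverse and $\cW$-constructible, and since by \eqref{1000} its stalks on $\Sigma_{s,i}$ are concentrated in the single degree $-s$, we have $\cFb|_{\Sigma_{s,i}}\simeq \mathcal{L}_{s,i}[s]$ for the local system $\mathcal{L}_{s,i}$ on the connected stratum $\Sigma_{s,i}$ with stalk $\widetilde H^{n-s}(F^\pitchfork_{s,i})$ and monodromy \eqref{vm}. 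This gives
$$\bH^{-s}(U_s;\cFb_s) \;=\; \bigoplus_i H^0(\Sigma_{s,i};\mathcal{L}_{s,i}) \;=\; \bigoplus_i \widetilde H^{n-s}(F^\pitchfork_{s,i})^{A_i}.$$
Part (a) is then Theorem \ref{t14a}(a) specialized to $j=0$, and \eqref{2.18} follows by taking ranks and using that $\widetilde H^{n-s}(F^\pitchfork_{s,i})^{A_i}\subseteq \widetilde H^{n-s}(F^\pitchfork_{s,i})$ has rank at most $\mu^\pitchfork_{s,i}$.

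For part (b), assume $s\geq 2$ and apply Theorem \ref{t14a}(b) at $j=0$ to obtain $\widetilde H^{n-s}(F)\cong \bH^{-s}(U_{s-1};\cFb_{s-1})$. The attaching triangle \eqref{stl} with $\ell=s-1$ and the vanishing \eqref{stl3} produce the exact sequence
$$0 \;\longrightarrow\; \widetilde H^{n-s}(F) \;\longrightarrow\; \bigoplus_i \widetilde H^{n-s}(F^\pitchfork_{s,i})^{A_i} \;\longrightarrow\; \bH^{-s+1}(\Sigma_{s-1};v_{s-1}^!\cFb_{s-1}).$$
The spectral sequence \eqref{spsup} together with the perverse costalk bound $\cH^q(v_{s-1}^!\cFb_{s-1})=0$ for $q<-s+1$ forces only the slot $(p,q)=(0,-s+1)$ to contribute in total degree $-s+1$ (both incoming and outgoing differentials from this slot vanish for degree reasons), so the cokernel-receiving group equals $H^0(\Sigma_{s-1};\cH^{-s+1}(v_{s-1}^!\cFb_{s-1}))$. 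The identity $\tilde k_{x_j}^!\simeq \tilde k_{x_j}^*[-2(s-1)]$ for the inclusion of a point into the smooth complex $(s-1)$-dimensional stratum $\Sigma_{s-1,j}$ identifies the stalk $\cH^{-s+1}(v_{s-1}^!\cFb_{s-1})_{x_j}$ with $H^{s-1}(k_{x_j}^!\cFb)$; globalizing over the connected components then recovers the $\pi_1$-invariants, producing $\bigoplus_j H^{s-1}(k_{x_j}^!\cFb)^{\pi_1(\Sigma_{s-1,j})}$. The inequality \eqref{lpb} is immediate by rank-nullity applied to the exact sequence.

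Part (c) is then automatic: if $\Sigma$ has no $(s-1)$-dimensional strata near $0$, then $U_{s-1}=U_s$ and $\cFb_{s-1}=\cFb_s$, so Theorem \ref{t14a}(b) at $j=0$ directly identifies $\widetilde H^{n-s}(F)$ with $\bigoplus_i \widetilde H^{n-s}(F^\pitchfork_{s,i})^{A_i}$. For part (d), the hypothesis $\r(X,\bZ)=n+1$ is equivalent by Proposition \ref{p28} to strong perversity of $\underline{\bZ}_X[n+1]$, and since the shifted vanishing cycle functor preserves strong perversity, $\cFb$ is itself strongly perverse. By Proposition \ref{pid}, every lowest-degree costalk of $\cFb$ on every stratum is free; applied at a point $x_{s,i}$ and combined with the shift identity above, this says exactly that $\widetilde H^{n-s}(F^\pitchfork_{s,i})$ is free. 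Freeness then passes to the $A_i$-invariant subgroup and to the direct sum, and the monomorphism \eqref{upb} embeds $\widetilde H^{n-s}(F)$ into this free $\bZ$-module of finite rank, forcing it to be free as well.

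The main technical obstacle I anticipate is the spectral sequence computation in part (b): carefully handling the shift/sign conventions to identify $\cH^{-s+1}(v_{s-1}^!\cFb_{s-1})_{x_j}$ with $H^{s-1}(k_{x_j}^!\cFb)$, and confirming that no differentials move mass out of the single contributing $E_2^{0,-s+1}$ slot before the spectral sequence degenerates.
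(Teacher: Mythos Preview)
Your proof is correct and follows essentially the same line as the paper's own argument: part (a) via Theorem \ref{t14a}(a) at $j=0$ combined with the identification of $\cFb_s$ as a shifted local system, part (b) via Theorem \ref{t14a}(b) at $j=0$ together with the attaching triangle at level $s-1$ and the spectral sequence analysis (the paper records this as the exact sequence \eqref{stl3s} and the identifications \eqref{k1}--\eqref{2:27}), and part (c) immediately from the case $U_{s-1}=U_s$.

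The only genuine difference is in part (d). The paper passes to a normal slice $N$ through $x_{s,i}$, uses transversality to show that $\underline{\bZ}_{X\cap N}[n+1-s]$ is strongly perverse, and then invokes the stalk-equals-costalk fact for an isolated point in the support of the strongly perverse sheaf $\varphi_{f|_N}\underline{\bZ}_{X\cap N}[n-s]$. Your route is more direct: you use that $\Sigma_{s,i}$ is open in $\Sigma=\supp\cFb_0$, so $i_{\Sigma_{s,i}}^!\cFb\cong i_{\Sigma_{s,i}}^*\cFb$, and then the smooth shift $\tilde k_x^!\simeq \tilde k_x^*[-2s]$ on the stratum identifies the lowest costalk $H^s(k_x^!\cFb)$ with the stalk $H^{-s}(k_x^*\cFb)\cong \widetilde H^{n-s}(F^\pitchfork_{s,i})$. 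Both approaches are valid; yours avoids the transversality bookkeeping, while the paper's has the advantage of making the geometric role of the transversal fibre explicit. One small point worth stating explicitly in your write-up is why $i_{\Sigma_{s,i}}^!\cFb\cong i_{\Sigma_{s,i}}^*\cFb$: factor the inclusion through the closed embedding $\Sigma\hookrightarrow Y$ (on which $!$ and $*$ pullbacks of $\cFb$ agree since $\cFb$ is supported there) followed by the open inclusion $\Sigma_{s,i}\hookrightarrow\Sigma$.
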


\begin{proof}
(a) We continue to use here the notations from Theorem \ref{t14a}. 
First note that \eqref{newthad} yields a monomorphism
$$
\widetilde{H}^{n-s}(F) \hookrightarrow \bH^{-s}(\Sigma_{s};\cFb_{s}).
$$
The hypercohomology spectral sequence together with the support condition for perverse sheaves then yields that (e.g., see \cite[Proposition 5.2.20]{Di})
$$\bH^{-s}(\Sigma_s;\cFb_s)\cong H^0(\Sigma_s;\cH^{-s}(\cFb_s))
\cong \bigoplus_i H^{0}(\Sigma_{s,i};\cH^{-s}(\cFb_s)\vert_{\Sigma_{s,i}}).$$
By constructibility, $\cH^{-s}(\cFb_s)\vert_{\Sigma_{s,i}}$ is a local system on $\Sigma_{s,i}$ with stalk $\widetilde{H}^{n-s}(F^\pitchfork_{s,i})$, which in our previous notations is exactly 
$\mathcal{L}_{s,i}$. Finally, it is well known (e.g., see \cite[Exercise 4.2.16]{Max}) that 
$$H^{0}(\Sigma_{s,i};\mathcal{L}_{s,i}) \cong \widetilde{H}^{n-s}(F^\pitchfork_{s,i})^{A_i},$$
with the right-hand side denoting the fixed part of $\widetilde{H}^{n-s}(F^\pitchfork_{s,i})$ under the $A_i$-action. Altogether, this proves \eqref{upb}, while \eqref{2.18} follows by computing ranks in \eqref{upb}.

\noindent (b) Let us next assume that $s\geq 2$. By setting $j=0$ in \eqref{newthad2} we get an isomorphism
\be\label{usm1}
\widetilde{H}^{n-s}(F) \cong \bH^{-s}(U_{s-1};\cFb_{s-1}).
\ee
Consider the inclusions
$$\Sigma_{s-1} \overset{\alpha}{\hookrightarrow} U_{s-1} \overset{\beta}{\hookleftarrow} U_{s}=\Sigma_s$$
(i.e., in the notations of Theorem \ref{t14a}, $\alpha=v_{s-1}$ and $\beta=u_{s-1}$)
and the corresponding attaching triangle 
\be\label{stls1}{\alpha}_!{\alpha}^!\cFb_{s-1} \to \cFb_{s-1} \to R{\beta}_*{\beta}^*\cFb_{s-1} \overset{[1]}{\to}\ee
with ${\alpha}_!={\alpha}_*$ and ${\beta}^*\cFb_{s-1} \cong \cFb_{s}$. In view of \eqref{usm1}, the hypercohomology long exact sequence associated to \eqref{stls1} contains the terms
\be\label{stl2s} \cdots \to \bH^{-s}(\Sigma_{s-1}; {\alpha}^!\cFb_{s-1}) \to \widetilde{H}^{n-s}(F)  \to   \bH^{-s}(U_{s};\cFb_{s}) \to \bH^{-s+1}(\Sigma_{s-1}; {\alpha}^!\cFb_{s-1}) \to \cdots\ee
As in the proof of Theorem \ref{t14a}, a spectral sequence computation yields that $$\bH^{-s}(\Sigma_{s-1}; {\alpha}^!\cFb_{s-1}) \cong 0,$$ and, as in the proof of \eqref{upb} above, we have:
$$\bH^{-s}(U_{s};\cFb_{s})\cong  \bigoplus_i \widetilde{H}^{n-s}(F^\pitchfork_{s,i})^{A_i}.$$
Therefore, \eqref{stl2s} yields the exact sequence
\be\label{stl3s} 0 \to \widetilde{H}^{n-s}(F)  \to \widetilde{H}^{n-s}(F^\pitchfork_{s,i})^{A_i} \to \bH^{-s+1}(\Sigma_{s-1}; {\alpha}^!\cFb_{s-1}) \to \cdots\ee
Since ${\alpha}^!\cFb_{s-1}$ is a constructible complex on $\Sigma_{s-1}$, its cohomology sheaves are local systems on every connected component $\Sigma_{s-1,j}$ of $\Sigma_{s-1}$. Moreover, the hypercohomology spectral sequence yields that
\begin{equation}\label{k1}
\begin{split} \bH^{-s+1}(\Sigma_{s-1}; {\alpha}^!\cFb_{s-1}) &\cong H^0(\Sigma_{s-1}; \cH^{-s+1}({\alpha}^!\cFb_{s-1})) \\ &\cong \bigoplus_j H^0(\Sigma_{s-1,j}; \cH^{-s+1}({\alpha}^!\cFb_{s-1})\vert_{\Sigma_{s-1,j}}).
\end{split}
\end{equation}
If $x \in \Sigma_{s-1}$, with inclusions 
$$\{x\} \overset{k_x}{\hookrightarrow} \Sigma_{s-1} \overset{\alpha}{\hookrightarrow} U_{s-1}$$
and $i'_x:=\alpha \circ k_x: \{x\} \hookrightarrow U_{s-1}$, then we have as in \cite[Remark 6.0.2(1)]{Sc} that
$$k_x^*\alpha^!\cong k_x^!\alpha^![2(s-1)] \cong (i'_x)^![2(s-1)].$$ In particular, the stalk of $\cH^{-s+1}({\alpha}^!\cFb_{s-1})$ at a point $x \in \Sigma_{s-1}$ is computed by:
 \be\label{k2} \cH^{-s+1}({\alpha}^!\cFb_{s-1})_x=H^{-s+1}(k_x^*\alpha^!\cFb_{s-1})\cong H^{s-1}\big((i'_x)^!\cFb_{s-1})\big.\ee
 Furthermore, if $i_x:\{x\} \hookrightarrow Y$ is the inclusion map, then it follows (e.g., using the proof of \cite[Corollary 8.2.10]{Max}) that \be\label{225}(i'_x)^!\cFb_{s-1} \cong i_x^!\cFb.\ee
 Therefore, after choosing a point $x_j$ in each $(s-1)$-dimensional stratum $\Sigma_{s-1,j}$, we get from \eqref{k1}, \eqref{k2} and \eqref{225} that 
 \be\label{2:27}
 \bH^{-s+1}(\Sigma_{s-1}; {\alpha}^!\cFb_{s-1}) \cong \bigoplus_j  H^{s-1}(i_{x_j}^!\cFb) ^{\pi_1(\Sigma_{s-1,j})}.
 \ee
The inequality \eqref{lpb} follows now by taking ranks in \eqref{stl3s} and using \eqref{2:27}.

The assertion in (c) follows  immediatly from \eqref{stl3s}. 

To prove (d), we deduce from \eqref{upb} that it suffices to show that $\widetilde{H}^{n-s}(F^\pitchfork_{s,i})$ is free, for each $i$. In the notations of \eqref{1000}, we have:
$$\widetilde{H}^{n-s}(F^\pitchfork_{s,i}) \cong \cH^{n-s}(\varphi_{f\vert_N} \underline{\bZ}_{X\cap N})_{x_{s,i}} \cong \cH^{0}(\varphi_{f\vert_N} \underline{\bZ}_{X\cap N}[n-s])_{x_{s,i}}.$$
Since the point $x_{s,i}$ is an isolated point in the support of the 
perverse sheaf $\varphi_{f\vert_N} \underline{\bZ}_{X\cap N}[n-s]$, the stalk 
and costalk 
cohomology of $\varphi_{f\vert_N} \underline{\bZ}_{X\cap N}[n-s]$ at $x_{s,i}$ 
are isomorphic, and they are concentrated in degree $0$. It is therefore enough to show that $\varphi_{f\vert_N} \underline{\bZ}_{X\cap N}[n-s]$ is a strongly perverse sheaf.

For this, we first note that the assumption $\r(X,\bZ)=n+1$ implies via Proposition \ref{p28} that $\underline{\bZ}_{X}[n+1]$ is strongly perverse. Then the fact that $N$ is transversal to the stratification of $X$ can be used to show that $\underline{\bZ}_{X\cap N}[n+1-s]$ is strongly perverse on $X \cap N$. Indeed, by transversality, $X\cap N$ gets an induced stratification with strata of the form $S \cap N$ for $S$ a stratum in $X$. 
For a stratum $S$  of $X$ and $x \in S \cap N$, we denote by $k'_x:\{x\} \hookrightarrow X \cap N$ and $k_x:\{x\} \hookrightarrow X$ the point inclusions. If we let $i_N:X \cap N \hookrightarrow X$ be the inclusion map,  then $i_N \circ k'_x=k_x$,  and $i_N^!=i_N^*[-2s]$ (see, e.g., \cite[(6.44)]{Sc}). The fact that $\underline{\bZ}_{X\cap N}[n+1-s]$ is perverse on $X \cap N$ follows immediately from the perversity of $\underline{\bZ}_{X}[n+1]$ (see, e.g., \cite[Lemma 6.0.4]{Sc}). 
Moreover, 
\begin{equation*} \begin{split} H^{\dim S}(k_x^! \underline{\bZ}_{X}[n+1]) &\cong  H^{\dim S}\big((k'_x)^! i_N^! \underline{\bZ}_{X}[n+1]\big) \cong H^{\dim S}\big((k'_x)^! i_N^* \underline{\bZ}_{X}[n+1-2s]\big) \\ &\cong H^{\dim S-s}\big((k'_x)^!  \underline{\bZ}_{X\cap N}[n+1-s]\big) ,\end{split}\end{equation*}
which translates into the fact that the perverse sheaf $\underline{\bZ}_{X\cap N}[n+1-s]$ on $X \cap N$ is strongly perverse.
Finally, the stability of strong perversity under the perverse vanishing cycle functor $\varphi_{f\vert_N}[-1]$ implies that  $\varphi_{f\vert_N} \underline{\bZ}_{X\cap N}[n-s]$ is a strongly perverse sheaf, as desired.\footnote{Similar arguments regarding freeness are used by the authors in the proof of \cite[Theorem 1.2]{MPT}, in the context of vanishing cohomology of complex projective hypersurfaces.} 

Let us point out that the freeness is also proved in Theorem \ref{t:euler}(a).
\end{proof}

\begin{remark}\label{r65}
The exact sequence \eqref{stl3s} shows that the ``correction'' of \eqref{upb} from being an isomorphism depends only on the $(s-1)$-dimensional strata of the singular locus $\Sigma$ and of the costalks of the perverse vanishing cycle complex $\cFb$ at points in these strata. A similar remark also follows from Theorem \ref{t:euler} after reducing to $s=2$ by iterated slicing.
As we will see in some of the examples in \S \ref{examples}, the monomorphism \eqref{upb} is not in general an isomorphism if $(s-1)$-dimensional strata are present in $\Sigma$. 
\end{remark}

\begin{remark}
If we work with $\bQ$-coefficients, the statements and proofs of Theorems \ref{t14a} and  \ref{c14a} hold in the category of mixed Hodge modules, provided that $\underline{\bQ}_X$ exists in the derived category of mixed Hodge modules (e.g., if $X$ is a complete intersection in a complex manifold, cf. \cite[Proposition 2.19]{Sa90}). In particular,  under this assumption, \eqref{newthad} is a monomorphism in the category of mixed Hodge structures and \eqref{newthad2}  is an isomorphism of mixed Hodge structures. 
\end{remark}

\subsection{Characteristic polynomials of Milnor monodromy. Jordan blocks.}\label{Mm} \
In this section, cohomology groups are taken with $\bC$-coefficients. Let $h$ and $h_{i}$ denote the Milnor monodromy on the cohomology of $F$, and on the transversal Milnor fiber $F_{s,i}^{\pitchfork}$ to some $s$-dimensional stratum, respectively. Let $\mathrm{char}_{h |\widetilde{H}^{n-s}(F)}$  denote the characteristic polynomial of the monodromy $h$ acting on $\widetilde{H}^{n-s}(F)$.
Let $b_\lambda(V, \mu)$ denote the dimension of the eigenspace
corresponding to the eigenvalue $\lambda$ of the linear operator $\mu$ acting on the vector space
$V$, and let  $J_\lambda(V, \mu)$ denote the maximum of the sizes of the 
Jordan blocks.

Since the monomorphism \eqref{upb} is compatible with the Milnor monodromy actions, with these notations we get the following.
 
 \begin{corollary}\label{c:2}\label{c:3}
The characteristic polynomial  $\mathrm{char}_{h^|\widetilde{H}^{n-s}(F)}$ divides $\prod_{i} \mathrm{char}_{h_i|\widetilde{H}^{n-s}(F^\pitchfork_{s,i})^{A_i}}$. 
 In particular,  $\mathrm{char}_{h^|\widetilde{H}^{n-s}(F)}$ divides the product
 $\prod_{i} \mathrm{char}_{h_i|\widetilde{H}^{n-s}(F^\pitchfork_{s,i})}$.  
 
 Moreover, we have the inequalities:
 \begin{enumerate}
\item[(i)] $b_\lambda(\widetilde{H}^{n-s}(F), h) \le \sum_{i} b_\lambda(\widetilde{H}^{n-s}(F_{s,i}^{\pitchfork})^{A_i}, h_i)$.
\item[(ii)] $J_\lambda(\widetilde{H}^{n-s}(F), h) \le \sum_{i} J_\lambda(\widetilde{H}^{n-s}(F_{s,i}^{\pitchfork})^{A_i}, h_i)$.
\end{enumerate} 
\fin
\end{corollary}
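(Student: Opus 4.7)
The plan is to upgrade the monomorphism \eqref{upb} of Theorem \ref{c14a}(a) to a morphism of $\bC[h]$-modules and then apply standard linear algebra of invariant subspaces and direct sums. The only nontrivial step will be the equivariance; everything else is formal.

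First I would verify that \eqref{upb} is compatible with the Milnor monodromy. The complex $\cFb = {^p\varphi_f}(\underline{\bZ}_X[n+1])$ carries a natural $h$-action, inducing the Milnor monodromy on $\widetilde{H}^{n-s}(F;\bC) \cong \bH^{-s}(\Sigma;\cFb_0)$ via \eqref{eq:defvanish}. The attaching triangles \eqref{stl}, the vanishing \eqref{stl3}, the edge maps of the hypercohomology spectral sequence, and the identification $H^0(\Sigma_{s,i};\mathcal{L}_{s,i}) \cong \widetilde{H}^{n-s}(F_{s,i}^\pitchfork)^{A_i}$ are all morphisms of $h$-modules, where on the stalk $\widetilde{H}^{n-s}(F_{s,i}^\pitchfork)$ of $\mathcal{L}_{s,i}$ the monodromy $h$ acts as the transversal Milnor monodromy $h_i$. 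Finally $h_i$ preserves the $A_i$-invariant part, since each horizontal loop $\gamma$ in $\Sigma_{s,i}$ and the transverse disc loop around $f=0$ commute in $\pi_1$ of a tubular neighborhood of $\Sigma_{s,i}$, so that $h_i$ commutes with every $A_i(\gamma)$.

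Once \eqref{upb} is known to be $h$-equivariant, the corollary follows from two elementary facts. For an $h$-equivariant injection $V \hookrightarrow W$ of finite-dimensional $\bC$-vector spaces, the quotient $W/V$ inherits an $h$-action, so $\mathrm{char}_{h|W} = \mathrm{char}_{h|V}\cdot \mathrm{char}_{h|W/V}$ gives the divisibility; the same inclusion also yields $b_\lambda(V,h) \le b_\lambda(W,h)$ and $J_\lambda(V,h) \le J_\lambda(W,h)$ (the latter because the generalized $\lambda$-eigenspace of $V$ sits inside that of $W$, on which $(h-\lambda)$ is nilpotent of index $J_\lambda(W,h)$). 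For the target $W = \bigoplus_i \widetilde{H}^{n-s}(F_{s,i}^\pitchfork)^{A_i}$ of \eqref{upb}, the induced $h$-action is $\bigoplus_i h_i$, so characteristic polynomials multiply, $\lambda$-eigenspace dimensions add, and Jordan block sizes satisfy $J_\lambda(\bigoplus_i V_i, \bigoplus_i h_i) = \max_i J_\lambda(V_i, h_i) \le \sum_i J_\lambda(V_i, h_i)$. Combining these gives the divisibility $\mathrm{char}_{h|\widetilde{H}^{n-s}(F)} \mid \prod_i \mathrm{char}_{h_i|\widetilde{H}^{n-s}(F_{s,i}^\pitchfork)^{A_i}}$ and the inequalities (i) and (ii). The second divisibility assertion follows by one further application of the invariant-subspace divisibility to $\widetilde{H}^{n-s}(F_{s,i}^\pitchfork)^{A_i} \hookrightarrow \widetilde{H}^{n-s}(F_{s,i}^\pitchfork)$.
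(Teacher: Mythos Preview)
Your proof is correct and follows exactly the approach the paper uses: the paper simply records that the monomorphism \eqref{upb} is compatible with the Milnor monodromy actions and declares the corollary immediate. You have filled in the details of that compatibility and the elementary linear-algebra consequences, which is precisely what the paper leaves implicit.
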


Corollary \ref{c:2} extends the results  \cite[Corollaries 3.2 and 3.3]{Ti-nonisol}; compare also with \cite[Section 3.3]{DS}.


 \subsection{Eigenspaces of monodromy.}\label{eigsh} 
The proofs of Theorems \ref{t14a} and \ref{c14a} apply to any perverse sheaf supported on the stratified singular locus of $f$, e.g., to the generalized eigensheaves $ \varphi_{f,\lambda}\underline{\bC}_X$ ($\lambda \in \bC$) of vanishing cycles. 
With the notations of \S\ref{nv}, consider the $\bC$-perverse sheaf $${\cFb^\lambda}:={^p\varphi_{f,\lambda}}(\underline{\bC}_X[n+1])$$ on $Y$, and let $\Sigma^\lambda$ be the support of $\cFb^\lambda$ with $s_\lambda=\dim \Sigma^\lambda$. Then $$\cFb^\lambda \vert_{\Sigma^\lambda}=:\cFb^\lambda_0$$ is a perverse sheaf on $\Sigma^\lambda$ and we have the isomorphisms:
\be\label{ad1l}
\widetilde{H}^k(F;\bC)_\lambda \cong \cH^{k-n}(\cFb^\lambda)_0 \cong \cH^{k-n}(\cFb^\lambda_0)_0 \cong \bH^{k-n}(\Sigma^\lambda;\cFb^\lambda_0).
\ee
The support condition for perverse sheaves then yields immediately that
\be
\widetilde{H}^{n-s_\lambda-j}(F;\bC)_\lambda=0
\ee
for all $j>0$, so the only interesting $\lambda$-eigenspaces of Milnor monodromy are $\widetilde{H}^{k}(F;\bC)_\lambda$ with $k=n-s_\lambda, \ldots, n$.
Let us next note that $\Sigma^\lambda \subseteq \Sigma$ is a closed union of strata in the Whitney stratification of $Y$, and it is exhausted by opens as in \eqref{of}, where we use the upperscript $\lambda$ when considering strata which are contained in $\Sigma^\lambda$.
Let us set 
$$\cFb^\lambda_\ell:=\cFb^\lambda_0\vert_{U^\lambda_\ell}$$ and note that $\cFb^\lambda_\ell$ is a perverse sheaf of $U^\lambda_\ell$ (i.e., on the union of strata in $\Sigma^\lambda$ of complex dimension $\geq \ell$). 
With these notations, the proof of Theorem \ref{t14a} adapted to the perverse sheaf $\cFb^\lambda_0$ on $\Sigma^\lambda$ yields the following (for $j=0$, compare also with \cite[Section 3.5]{DS}).
\bt\label{t15a}
\begin{itemize}
\item[(a)]  For any $j=0,\ldots, s_\lambda-1$ there is a monomorphism
\be\label{newthadb}
\widetilde{H}^{n-s_\lambda+j}(F;\bC)_\lambda \hookrightarrow \bH^{-s_\lambda+j}(U^\lambda_{s_\lambda-j};\cFb^\lambda_{s_\lambda-j}).
\ee
\item[(b)]  If $s_\lambda\geq 2$, then for any $j=0,\ldots,s_\lambda-2$ there is an isomorphism
\be\label{newthad2b}
\widetilde{H}^{n-s_\lambda+j}(F;\bC)_\lambda \cong \bH^{-s_\lambda+j}(U^\lambda_{s_\lambda-j-1};\cFb^\lambda_{s_\lambda-j-1}).
\ee
\end{itemize}
\fin
\et

By specializing \eqref{newthadb} to $j=0$, and denoting by $A_i^\lambda$ the local system monodromy representation along the $i$-th component $\Sigma^\lambda_{s_\lambda,i}$ of the top dimensional stratum of $\Sigma^\lambda$, with transversal Milnor fiber $F^\pitchfork_{s_\lambda,i}$, we get as in Theorem \ref{c14a} the following (see also \cite[Section 3.3]{DS}).
\bc\label{c15a}
\be\label{upbb}
\widetilde{H}^{n-s_\lambda}(F;\bC)_\lambda  \hookrightarrow \bigoplus_i \left( \widetilde{H}^{n-s_\lambda} (F^\pitchfork_{s_\lambda,i};\bC)_\lambda \right)^{A^\lambda_i}.
\ee
Moreover, \eqref{upbb} becomes an isomorphism if $\Sigma^\lambda$ contains no strata of dimension $s_\lambda -1$.
\fin
\ec
\

Taking dimensions, Corollary \ref{c15a} yields (with self-explanatory notations) the following Betti number estimate refining Corollary \ref{c:3}(i):
\be
{b}^\lambda_{n-s_\lambda}(F)\leq \sum_i \dim \widetilde{H}^{n-s_\lambda}(F_{s_\lambda,i}^{\pitchfork};\bC)^{A^\lambda_i} \leq \sum_i {b}^\lambda_{n-s_\lambda}(F^\pitchfork_{s_\lambda,i}).
\ee

\section{Milnor fiber cohomology via iterated slicing}\label{sl} 
In this section, we use nearby and vanishing cycle functors to derive information about the Milnor fiber cohomology via slicing. We also make use here of the {\it vanishing neighborhood of the nearby section} of \cite{Ti-nonisol}, which gives the geometric counterpart of the slicing via perverse sheaves.

\subsection{Setup and notations.}\label{setnot}  \ 
As before, let $n \geq 1$ and consider a nonconstant holomorphic function germ $f:(X,0) \to (\bC,0)$ defined on a pure $(n+1)$-dimensional complex singularity germ $(X,0)$ contained in some ambient $(\bC^N,0)$. We work under the notations and assumptions of \S\ref{nv}, in particular we assume that $\r(X,\bZ)=n+1$ or, equivalently, that $\underline{\bZ}_X[n+1]$ is a strongly perverse sheaf on $X$ in the sense of Definition \ref{sper}(c); see Proposition \ref{p28} and Corollary \ref{cfield}(b). 

Let $l:(\bC^N, 0) \to (\bC, 0)$ be a general linear function germ, i.e., transversal to all the strata of $\cW$ of $X$ except at 0. The restriction of $l$ to $X$, to $f^{-1}(0)$, or to any other subset shall be clear from the context and will also be denoted by $l$. Set
$$Y:=f^{-1}(0), \ H:=l^{-1}(0),  \ f':=f\vert_H, \ Y':=f'^{-1}(0)=Y \cap H.$$
In what follows, we will consider the composition of inclusion maps 
$$\{0\} \overset{i'}{\hookrightarrow} Y'  \overset{u'}{\hookrightarrow} Y \overset{u}{\hookrightarrow} X, 
\ \mbox{ with }\  i :=u' \circ i' : \{0\} \hookrightarrow Y.$$

As in \S \ref{bound}, we consider a Milnor ball $B_\e \subset X$ for $f$ at 0.  The generic choice of $l$ implies that the stratified singular locus $\Sing_{\cW} (l,f)$ of the map germ $(l,f):(X,0) \to (\bC^2,0)$ is the union $\Gamma(l,f) \cup \Sigma$ of the stratified singular locus $\Sigma$ of $f$ with the so-called \textit{polar curve} $\Gamma(l,f)$, see, e.g., \cite{Le2, Ti-nonisol, Ti-book}. It follows that $\Gamma(l,f)$ intersects the fibre $(l,f)^{-1}(0,0)$ only at the origin. In turn, this implies
that the map germ $(l,f)$ is open at the origin of the target, 
and that there exists a fibration outside the discriminant $\Delta := (l,f)(\Gamma(l,f) \cup \Sigma)$, in the following sense: there exist disks (denoted by $D_{*}$) of small enough  radii $0<\gamma' \ll \eta' \ll \epsilon$ such that the map $(l,f) : B \to \bC^2$, with 
$B := B_\e \cap l^{-1}(D_{\eta'}) \cap f^{-1}(D_{\gamma'})$,  
restricts to a locally trivial fibration: 
\be\label{genc}
 (l,f)_{|} : B \setminus (l,f)^{-1}(\Delta) \rightarrow \bC^2 \setminus \Delta .
\ee

It also follows that the fibration \eqref{genc} is also independent of the choices of the constants, cf. \cite{Le2, Ti-nonisol, Ti-book}.
Since the complement of the discriminant $\Delta$ is path connected,  it follows that the fibre of the map \eqref{genc}  is unique.  Let us denote it by $F'$. In particular, $F'$ is the Milnor fibre of $f'$ and also
 $$F' \simeq B_\epsilon \cap f^{-1}(\gamma) \cap l^{-1}(\eta)$$
  for $0<\gamma \ll \eta \ll \epsilon \ll 1$. 

We refer to Remark \ref{rem36} for map germs involving several general functions, and for a different approach by \cite{MP} where the functions are non-generic.

\subsection{Cohomological version of L\^e's attaching result.} \label{Lea} \ 
Here we recall the cohomological version of a well known result of L\^e \cite{Le0} concerning the CW structure of the Milnor fiber. It has already been discussed in this form in \cite{Sa87} or \cite{Ma94}. 

By applying the distinguished triangle \eqref{uno} for the nearby and vanishing cycle functors associated to $l:(Y,0) \to (\bC,0)$ to the complex $\psi_f{\underline{\bZ}_X}$ on $Y$, we get the distinguished triangle
\be\label{dtab}
u'^*\psi_f{\underline{\bZ}_X} \to \psi_{l}\psi_f{\underline{\bZ}_X} \to \varphi_{l}\psi_f{\underline{\bZ}_X} \overset{[1]}{\to}
\ee
Applying the functor $i'^*$ to \eqref{dtab} yields the distinguished triangle
\be\label{dtb}
i^*\psi_f{\underline{\bZ}_X} \to i'^*\psi_{l}\psi_f{\underline{\bZ}_X} \to i'^*\varphi_{l}\psi_f{\underline{\bZ}_X} \overset{[1]}{\to}
\ee
Applying the cohomology functor $H^*(-)$ to \eqref{dtb} and using \eqref{i62} 
yields the following long exact sequence of groups
\be\label{dtc}
\cdots \to H^{k}(F) \to \cH^k(\psi_{l}\psi_f{\underline{\bZ}_X})_0 \to \cH^k(\varphi_{l}\psi_f{\underline{\bZ}_X})_0 \to H^{k+1}(F) \to \cdots
\ee

We include the following well known result and its proof for future reference:
\begin{lemma}
For any integer $k$, 
\be\label{314n}
\cH^k(\psi_{l}\psi_f{\underline{\bZ}_X})_0  \cong  H^k(F') \cong \cH^k(\psi_{f}\psi_l{\underline{\bZ}_X})_0.
\ee 
\end{lemma}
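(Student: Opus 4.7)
The plan is to identify both sides with $H^k(F')$ by unpacking the iterated nearby cycles geometrically. The key tool is the standard stalk-as-hypercohomology formula for the nearby cycle functor of a holomorphic function at the origin, combined with the transversality properties built into the generic choice of $l$.

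For the first isomorphism $\cH^k(\psi_l\psi_f\underline{\bZ}_X)_0 \cong H^k(F')$, I would first apply the stalk formula
\begin{equation*}
(\psi_l \mathcal{G})_0 \;\cong\; \bH^*\big(l^{-1}(\eta)\cap Y\cap B_\epsilon;\,\mathcal{G}\big), \qquad 0<|\eta|\ll \epsilon,
\end{equation*}
to the complex $\mathcal{G}=\psi_f\underline{\bZ}_X$ on $Y$. By the genericity of the linear form $l$, the affine hyperplane $l^{-1}(\eta)$ is transverse to every Whitney stratum of $X$ (and of $Y$) in $B_\epsilon$, hence non-characteristic with respect to any $\cW$-constructible complex, in particular with respect to $\psi_f\underline{\bZ}_X$. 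The non-characteristic base change for nearby cycles then yields
\begin{equation*}
\psi_f\underline{\bZ}_X\big|_{l^{-1}(\eta)\cap Y} \;\cong\; \psi_{f|_{l^{-1}(\eta)}}\underline{\bZ}_{l^{-1}(\eta)\cap X},
\end{equation*}
and the global hypercohomology on the right-hand side over the Milnor tube $l^{-1}(\eta)\cap B_\epsilon$ of the restricted function $f|_{l^{-1}(\eta)}$ is computed by its generic fibre $l^{-1}(\eta)\cap f^{-1}(\gamma)\cap B_\epsilon$, which by the definition recalled in \S\ref{setnot} is precisely $F'$.

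The second isomorphism $H^k(F')\cong \cH^k(\psi_f\psi_l\underline{\bZ}_X)_0$ follows from the symmetric computation with the roles of $l$ and $f$ interchanged: $\psi_l\underline{\bZ}_X$ is a complex on $H$, and applying the analogous stalk formula for $\psi_{f'}$ at $0$ together with non-characteristic base change along the generic transverse hypersurface $f^{-1}(\gamma)\subset X$ reduces the computation to $H^k\bigl(f^{-1}(\gamma)\cap l^{-1}(\eta)\cap B_\epsilon\bigr)=H^k(F')$. The geometric reason that both orders produce the same group is the local triviality of the fibration \eqref{genc} over the path-connected complement $\bC^2\setminus\Delta$: the generic fibre of the map germ $(l,f)$ is $F'$ irrespective of the direction from which one approaches the origin of the target $\bC^2$.

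The main obstacle is the justification of the non-characteristic base change in each direction. What is needed is that $l^{-1}(\eta)\hookrightarrow X$ be transverse not merely to the Whitney stratification of $X$, but to the (finer) stratification of $Y$ to which $\psi_f\underline{\bZ}_X$ is constructible, and similarly for $f^{-1}(\gamma)\hookrightarrow X$ relative to $\psi_l\underline{\bZ}_X$. Both are ensured by the construction of \S\ref{setnot}: the polar curve $\Gamma(l,f)$ meets $(l,f)^{-1}(0,0)$ only at the origin, so for all sufficiently small $\eta,\gamma\neq 0$ the slices $l^{-1}(\eta)$ and $f^{-1}(\gamma)$ avoid $\Gamma(l,f)\cup\Sigma$ inside $B_\epsilon$, which is exactly the required transversality.
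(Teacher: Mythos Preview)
Your proof is correct and follows essentially the same route as the paper's: stalk formula for the outer nearby cycle, base change for $\psi_f$ (resp.\ $\psi_l$) along the transverse slice (this is precisely \cite[Lemma~4.3.4]{Sc}, as in the paper), and identification of the resulting fibre with $F'$ via the fibration \eqref{genc}.

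One inaccuracy in your final paragraph: it is not true that $l^{-1}(\eta)$ and $f^{-1}(\gamma)$ ``avoid $\Gamma(l,f)\cup\Sigma$ inside $B_\epsilon$''. The slice $l^{-1}(\eta)$ meets $\Sigma$ in a set of dimension $s-1$ (this is exactly the complex link $\lk(\Sigma,0)$ used throughout \S\ref{Tib}), and $f^{-1}(\gamma)$ meets the polar curve $\Gamma(l,f)$ in finitely many points (counted by the polar multiplicity $\tau_f$ in \S\ref{Lea}). The transversality you need is not obtained by avoidance: transversality of $l^{-1}(\eta)$ to the Whitney stratification $\cW$ of $X\setminus\{0\}$ is a direct consequence of the generic choice of $l$, and for the second isomorphism the relevant non-characteristic condition follows from the fact that $(l,f)$ has no stratified critical values near $(0,\gamma)$ and $(\eta,0)$ other than those coming from the discriminant $\Delta$, which is precisely the content of the fibration \eqref{genc}.
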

\begin{proof}
With $\gamma, \eta, \epsilon$ as in \S  \ref{setnot}, the generic choice of the linear function $l$ implies that $l^{-1}(\eta)$ is smooth in the ambient space $\bC^{N}$, and transversal to a fixed Whitney stratification $\cW$ of the set germ $(X\m \{0\}, 0)$. Letting $$\widehat{f}:=f\vert_{X \cap l^{-1}(\eta)},$$
there a base change isomorphism (e.g., see \cite[Lemma 4.3.4]{Sc}):
\be\label{transversal}\big(\psi_f{\underline{\bZ}_X}\big)\vert_{f^{-1}(0) \cap l^{-1}(\eta)} \cong \psi_{\widehat{f}}\underline{\bZ}_{X\cap l^{-1}(\eta)}.\ee
Then using \eqref{i62} we get:
\begin{equation*}
\begin{split} \cH^k(\psi_{l}\psi_f{\underline{\bZ}_X})_0  \cong 
H^k(B_\epsilon \cap l^{-1}(\eta) \cap f^{-1}(0); \psi_f{\underline{\bZ}_X} )  & \ \\
 \cong H^k(B_\epsilon \cap l^{-1}(\eta) \cap f^{-1}(0); \psi_{\widehat{f}}\underline{\bZ}_{X\cap l^{-1}(\eta)} ) 
 \cong H^k(B_\epsilon \cap l^{-1}(\eta) \cap f^{-1}(\gamma); \bZ) 
 &  \cong H^k(F').
\end{split}
\end{equation*}
To show the isomorphism $\cH^k(\psi_{f}\psi_l{\underline{\bZ}_X})_0 \cong  H^k(F')$ we repeat the above procedure. What makes it possible is the genericity of $l$ as discussed in \S \ref{genc}. This genericity implies, roughly speaking, that commuting the functors $\psi_{f}$ and $\psi_l$ yields the isomorphism \eqref{314n}.
\end{proof}

By combining \eqref{dtc} and \eqref{314n}, we get that 
\be\label{315n}
\cH^k(\varphi_{l}\psi_f{\underline{\bZ}_X})_0 \cong H^{k+1}(F, F').
\ee
Since $l$ is generic, the origin $\{0\}$ is an isolated point in the support of the strongly perverse sheaf ${^p\varphi}_{l}{^p\psi}_f(\underline{\bZ}_X[n+1])$ on $Y'$, and hence the cohomology of $i'^*{^p\varphi}_{l}{^p\psi}_f(\underline{\bZ}_X[n+1])$ is concentrated in degree zero. Moreover, since 
\begin{equation*}
\begin{split}
\cH^k(\varphi_{l}\psi_f{\underline{\bZ}_X})_0 \cong H^{k}( i'^*\varphi_{l}\psi_f{\underline{\bZ}_X} ) &\cong H^{k-n+1} \big(  i'^*{^p\varphi}_{l}{^p\psi}_f(\underline{\bZ}_X[n+1])\big) \\ &\cong H^{k-n+1} \big(  (i')^!{^p\varphi}_{l}{^p\psi}_f(\underline{\bZ}_X[n+1])\big),
\end{split}
\end{equation*}
we get that $\cH^k(\varphi_{l}\psi_f{\underline{\bZ}_X})_0=0$ for all $k \neq n-1$, and 
$$\cH^{n-1}( \varphi_{l}\psi_f{\underline{\bZ}_X} )_0 \cong H^n(F,F') \cong \bZ^{\tau_f}$$ is free.

The long exact sequence \eqref{dtc} together with \eqref{314n} and \eqref{315n} then yield the isomorphisms
\be\label{eq:slicing1} H^k(F) \cong H^k(F'), \ \ \ \text{ for } \ k<n-1,\ee
and an exact sequence
\be\label{eq:slicing2} 0 \to H^{n-1}(F) \to H^{n-1}(F') \to \bZ^{\tau_f} \to H^n(F) \to 0,\ee
where
$$\tau_f={\rm int}_0(\Gamma(l,f), f^{-1}(0))$$ 
is a polar intersection multiplicity at $0$  (see \S \ref{setnot} for the definition of the polar locus $\Gamma(l,f)$).
Note that by \eqref{eq:slicing2} one has that $b_n(F) \leq \tau_f$, where $b_n(F)$ is the $n$-th Betti number  of the Milnor fiber of $f$ at the origin. We will give a sharper bound in \eqref{c23b} below.


\subsection{Milnor fiber cohomology and the vanishing neighborhood of the nearby section} \label{Tib} \ 
We give here a sheaf-theoretic version of the cohomological results obtained by Tib\u{a}r in \cite[Theorem 2.2]{Ti-nonisol}, which will play a fundamental role in this paper, and which we could not locate in this form in the literature.

We start by evaluating the distinguished triangle \eqref{uno} for the nearby and vanishing cycle functors associated to $l:(Y,0) \to (\bC,0)$ on the complex $\varphi_f{\underline{\bZ}_X}$ on $Y$, to get the distinguished triangle on $Y'$
\be\label{dta2}
u'^*\varphi_f{\underline{\bZ}_X} \to \psi_{l}\varphi_f{\underline{\bZ}_X} \to \varphi_{l}\varphi_f{\underline{\bZ}_X} \overset{[1]}{\to}
\ee
Applying $i'^*$, and then taking the long exact sequence obtained by applying the cohomology functor $H^*(-)$, one then gets by using \eqref{i62} the long exact sequence
\be\label{lesT}
\cdots \to \widetilde{H}^k(F) \to \cH^k(\psi_{l}\varphi_f{\underline{\bZ}_X})_0
\to \cH^k(\varphi_{l}\varphi_f{\underline{\bZ}_X})_0  \to \widetilde{H}^{k+1}(F) \to \cdots
\ee
Since ${^p\psi}_{l}{^p\varphi}_f(\underline{\bZ}_X[n+1])$ is a perverse sheaf on $Y'$, the support condition for perverse sheaves yields that
$$\cH^k(\psi_{l}\varphi_f{\underline{\bZ}_X})_0 = H^{k-n+1}\big(i'^*{^p\psi}_{l}{^p\varphi}_f(\underline{\bZ}_X[n+1])\big) =0 \ \ \ \text{if} \ k>n-1.$$
Furthermore, since $l$ is generic, we obtain as before that $\cH^k(\varphi_{l}\varphi_f{\underline{\bZ}_X})_0=0$ for all $k \neq n-1$. 
Altogether, we get from \eqref{lesT} isomorphisms
\be\label{eq:slicing3} \widetilde{H}^k(F) \cong \cH^k(\psi_{l}\varphi_f{\underline{\bZ}_X})_0 \ \ \ \text{if} \ k<n-1,\ee
and an exact sequence
\be\label{eq:slicing4} 0 \to \widetilde{H}^{n-1}(F) \to \cH^{n-1}(\psi_{l}\varphi_f{\underline{\bZ}_X})_0
\to \cH^{n-1}(\varphi_{l}\varphi_f{\underline{\bZ}_X})_0  \to \widetilde{H}^{n}(F)  \to 0.\ee

Let us next give a geometric interpretation of \eqref{eq:slicing3} and  \eqref{eq:slicing4}. Since $\psi_{l}$ is a functor of triangulated categories, by applying it to the distinguished triangle on $Y$ 
\be\label{bas}\underline{\bZ}_Y \to \psi_{f} \underline{\bZ}_X \to \varphi_f \underline{\bZ}_X \overset{[1]}{\to}\ee
we get a new distinguished triangle
\be
\psi_{l}\underline{\bZ}_Y \to \psi_{l}\psi_{f} \underline{\bZ}_X \to \psi_{l}\varphi_f \underline{\bZ}_X \overset{[1]}{\to}
\ee
of constructible complexes on $Y'$. Applying the functor $\cH^*(-)_0$ of taking the stalk cohomology at the origin, one gets  the long exact sequence
\be\label{lesT2}
\cdots \to \cH^k(\psi_{l}\underline{\bZ}_Y)_0 \to  \cH^k(\psi_{l}\psi_{f} \underline{\bZ}_X)_0 \to \cH^k(\psi_{l}\varphi_f \underline{\bZ}_X)_0 \to \cdots
\ee
We have that 
\be\label{215b}\cH^k(\psi_{l}\underline{\bZ}_Y)_0 \cong H^k\big(B_\epsilon \cap Y \cap  l^{-1}(\eta)\big) \cong H^k\big(B_\epsilon \cap f^{-1}(D_\gamma) \cap  l^{-1}(\eta)\big),\ee
where $D_\gamma \subset \bC$ is a closed disk of radius $\gamma$ centered at the origin.
Following the notations from \cite{Ti-nonisol} and from \S \ref{setnot}, we set
$$F'_D:=B_\epsilon \cap f^{-1}(D_\gamma) \cap  l^{-1}(\eta)$$ and note that $F \cap F'_D=F'$. In the last isomorphism of \eqref{215b} we used the fact that $F'_D$ deformation retracts to the complex link $\lk(Y,0)=B_\epsilon \cap Y \cap  l^{-1}(\eta)$ of the hypersurface  $Y=f^{-1}(0)$ at the origin. 
Plugging \eqref{314n} and \eqref{215b} in \eqref{lesT2}, we obtain for any $k \leq n-1$ an isomorphism
\be\label{geoi}
\cH^k(\psi_{l}\varphi_f{\underline{\bZ}_X})_0 \cong H^{k+1}(F'_D,F').
\ee
We can therefore restate \eqref{eq:slicing3} as an isomorphism
\be\label{eq:slicing3b} 
\widetilde{H}^k(F) \cong H^{k+1}(F'_D,F') \ \ \ \text{if} \ k<n-1.
\ee
Furthermore, since by excision we get $H^*(F\cup F'_D,F) \cong H^*(F'_D,F')$,
the long exact sequence \eqref{eq:slicing4} can be identified with the long exact sequence of (reduced) cohomology of the pair $(F\cup F'_D,F)$. So in particular we have the identification 
$$\cH^{n-1}(\varphi_{l}\varphi_f{\underline{\bZ}_X})_0  \cong \widetilde{H}^n(F\cup F'_D),$$
and  \eqref{eq:slicing4} becomes the exact sequence from \cite[Theorem 2.2]{Ti-nonisol}, namely
\be\label{eq:slicing4b} 0 \to \widetilde{H}^{n-1}(F) \to H^{n}(F'_D,F')
\to \widetilde{H}^n(F\cup F'_D)  \to \widetilde{H}^{n}(F)  \to 0.\ee
For future reference, let us record here the fact that the above discussion also shows that the reduced cohomology $\widetilde{H}^*(F\cup F'_D)$ is concentrated in degree $n$ (where it is free).

\medskip

Let  $T$ be a small tubular neighborhood of the complex link $\lk(\Sigma,0)=B_{\e}\cap \Sigma \cap  l^{-1}(\eta)$ of $\Sigma$ in the slice $l^{-1}(\eta)$. 
By retraction we get (as in \cite{Ti-nonisol}) the isomorphism:
\begin{equation}\label{eq:retr}
  H^* (F'_D, F')  \cong H^* (T, T\cap F'), 
\end{equation}
 which provides us with a replacement of the pair $(F'_D, F')$ appearing in \eqref{eq:slicing3b} and \eqref{eq:slicing4b} by the pair  $(T, T\cap F')$.

\begin{definition}\label{d:vanish}
 We call the pair $(T, T\cap F')$ the {\it vanishing neighborhood of the nearby section}, and  we call $H^* (T, T\cap F')$ the {\it vanishing cohomology of the nearby section} of $f$.
\end{definition}

The Milnor monodromy $h$ of the cohomology $H^*(F)$ is induced by a geometric monodromy which acts on $F$ by moving along a small circle $y(t) =\exp(2i\pi t)\gamma$, and on the slice $F' =B_{\e} \cap f^{-1}(\gamma)\cap l^{-1}(\eta)$ by moving along the circle $\{ y(t) =\exp(2i\pi t)\gamma\} \cap \{ x=\eta\} \subset \bC^{2}$. It acts as the identity on  $F'_D$  and on the neighborhood $T$.

There is another geometric monodromy, defined by moving the slice $l=\eta$ around the circle $\exp(2i\pi t)\eta$ for $t\in [0,1]$.
We call it  {\it the sectional $l$-monodromy}. This acts on $F'$, on  $F'_D$,  and on $T\cap F'$. It acts as the identity on $F$.
Its induced action on the $\bZ$-cohomology groups of all these spaces will be denoted by $L$, which in the previous notations can also be identified with the action on $\cH^*(\psi_{l}\varphi_f{\underline{\bZ}_X})_0$ induced from the monodromy action on $\psi_{l}$.

In particular the sectional monodromy acts on the exact sequence  \eqref{eq:slicing4b}. Looking only to the left hand side of it, by the identification \eqref{eq:retr},we get the following monomorphism, cf. \cite[Corollary 2.4]{Ti-nonisol}:
\begin{equation}\label{eq:monomorphism0}
\widetilde{H}^{n-1}(F) \hookrightarrow  \ker \big[L -\id \mid  H^{n}(T, T\cap F')\big]
\end{equation}


\subsection{Iterated slicing, iterated nearby and vanishing cycles} \label{itsl} \     
We focus now on the computation of the vanishing cohomology of the nearby section which, as can be seen from \eqref{eq:slicing3b}, yields the  cohomology of the Milnor fiber $F$, except in the two top dimensions. 

In what follows, we calculate the Milnor fiber cohomology using repeated slicing by general hyperplanes $H_{k}:= \{l_{k} =0\}$ for $1\le k\le s-2$, with each $l_k$ general linear functions with $l_k(0)=0$, as in \S \ref{Lea} and \S \ref{Tib}.

Let $X^{(k)}:=X\cap H_{1} \cap \cdots \cap H_{k}$, with $X^{(0)}=X$, and 
 consider the function germ $$f^{(k)} : (X^{(k)}, 0) \to (\bC,0)$$ with  Milnor fiber $F^{(k)}$ at the origin. In particular, $F^{(1)}=F'$ and we set $F^{(0)}:=F$. Let $Y^{(k)}:=(f^{(k)})^{-1}(0)=Y \cap H_1 \cap \cdots \cap H_k$.
By the genericity of the hyperplane slices, $\Sigma^{(k)} :=\Sigma \cap H_{1} \cap \cdots \cap H_{k}$ is the singular locus of $f^{(k)}$ and its Whitney stratification is induced by that of $\Sigma$.

The Milnor fiber $F^{(k)}$ is also identified with $B_{\e}\cap (l_{k}, f^{(k-1)})^{-1}(\eta, \gamma)$, the tube $F^{(k)}_D$ is defined  similarly to  $F'_{D}$ in \S \ref{Tib}, 
and $T^{(k)}$ is a tubular neighborhood of the complex link $\lk(\Sigma^{(k)},0)$ of the singular locus $\Sigma^{(k)}$, where $T^{(0)} := T$. As in \S \ref{Tib}, the reduced integral cohomology $\widetilde H^{*}(F^{(k-1)}\cup F^{(k)}_D)$  is concentrated in degree $n-k+1$.  

The case of a $1$-dimensional singular locus having been considered before, 
we focus now on the higher dimensional case $n>s\ge 2$. 
Combining the slice isomorphism \eqref{eq:slicing1} on the one hand, with the isomorphism \eqref{eq:slicing3b} and the exact sequences \eqref{eq:slicing4b}
on the other hand, we get the following:
\begin{proposition}\label{t:izos}
Let $n>s\ge 2$.  
 For each fixed  $2\le k\le s$, there are isomorphisms
\[  
  \widetilde H^{n-k}(F)  \cong    \widetilde H^{n-k}(F^{(j)})  \cong H^{n-k+1}(T^{(j)}, T^{(j)}\cap F^{(j+1)})
\]
for any $j= 1, \ldots, k-2$. Moreover,  for $1\le k\le s-1$ we have the following 
exact sequence  
 \begin{multline*} \ \ \ 0\to \widetilde H^{n-k} (F^{(k-1)}) \to H^{n-k+1}(T^{(k-1)}, T^{(k-1)}\cap F^{(k)})  \\ \to \widetilde H^{n-k+1}(F^{(k-1)}\cup F^{(k)}_D)
 \to \widetilde H^{n-k+1}(F^{(k-1)})  \to  0, \ \ \ \ \end{multline*}
 where $\widetilde H^{n-k} (F)   \cong    \widetilde H^{n-k}(F^{(k-1)})$.
\fin
\end{proposition}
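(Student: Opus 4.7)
The plan is to bootstrap the one-step slicing results of Sections 3.2--3.3 --- namely the slice isomorphism \eqref{eq:slicing1}, the identification \eqref{eq:slicing3b}, the exact sequence \eqref{eq:slicing4b}, and the retraction isomorphism \eqref{eq:retr} --- to successive general hyperplane sections. Concretely, for each $j$, I view $f^{(j)}:(X^{(j)},0)\to(\bC,0)$ as playing the role that $f$ played in those sections, with $l_{j+1}$ playing the role of the generic linear form $l$ and $f^{(j+1)}$ playing the role of $f'$.

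The first step is a preliminary inductive check: for each $0\le j\le s-1$, the germ $(X^{(j)},0)$ is pure of complex dimension $n+1-j$; the shifted constant sheaf $\underline{\bZ}_{X^{(j)}}[n+1-j]$ is strongly perverse (by iterating the transversality argument already used in the proof of Theorem \ref{c14a}(d)); the stratified singular locus of $f^{(j)}$ is $\Sigma^{(j)}$ with the induced Whitney stratification, of complex dimension $s-j$; and $l_{j+1}$ is general relative to $X^{(j)}$ and this stratification. These facts legitimize applying the machinery of Sections 3.2--3.3 to $f^{(j)}$ with the dimension index $n$ replaced by $n-j$.

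Granting this, the first chain of isomorphisms comes from iterating \eqref{eq:slicing1} applied to $f^{(i)}$: this yields $\widetilde{H}^{n-k}(F^{(i)})\cong\widetilde{H}^{n-k}(F^{(i+1)})$ whenever $n-k<(n-i)-1$, i.e.\ $i\le k-2$. Running $i$ from $0$ to $k-2$ produces
\[
\widetilde{H}^{n-k}(F)\cong\widetilde{H}^{n-k}(F^{(1)})\cong\cdots\cong\widetilde{H}^{n-k}(F^{(k-1)}),
\]
which delivers simultaneously the first isomorphism of the proposition for $j=1,\ldots,k-2$ and the closing assertion $\widetilde{H}^{n-k}(F)\cong\widetilde{H}^{n-k}(F^{(k-1)})$. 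The second isomorphism for $j\le k-2$ then comes from applying \eqref{eq:slicing3b} to $f^{(j)}$ (permissible since $n-k<n-j-1$), giving $\widetilde{H}^{n-k}(F^{(j)})\cong H^{n-k+1}(F^{(j+1)}_D,F^{(j+1)})$, followed by the retraction isomorphism \eqref{eq:retr} at level $j$ to identify the right-hand side with $H^{n-k+1}(T^{(j)},T^{(j)}\cap F^{(j+1)})$.

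For the exact sequence, I apply \eqref{eq:slicing4b} to $f^{(k-1)}$, valid for $1\le k\le s-1$, obtaining
\begin{multline*}
0\to\widetilde{H}^{n-k}(F^{(k-1)})\to H^{n-k+1}(F^{(k)}_D,F^{(k)})\\
\to\widetilde{H}^{n-k+1}(F^{(k-1)}\cup F^{(k)}_D)\to\widetilde{H}^{n-k+1}(F^{(k-1)})\to 0,
\end{multline*}
and then replace the second term via the retraction \eqref{eq:retr} at level $k-1$. The main obstacle is the preliminary inductive check: one must verify that strong perversity, the genericity of the forms $l_{k}$, and the geometric data (Milnor balls, tubular neighbourhoods $T^{(j)}$, complex links) all descend properly under repeated generic slicing. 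Once this is in place, the proposition reduces to a straightforward assembly of the one-step results.
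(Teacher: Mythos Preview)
Your proposal is correct and follows exactly the approach the paper sketches: the paper's entire proof is the sentence preceding the proposition, namely ``Combining the slice isomorphism \eqref{eq:slicing1} on the one hand, with the isomorphism \eqref{eq:slicing3b} and the exact sequences \eqref{eq:slicing4b} on the other hand, we get the following,'' and you have simply spelled out the index bookkeeping and the inductive descent (including the strong perversity of $\underline{\bZ}_{X^{(j)}}[n+1-j]$ and the identification via \eqref{eq:retr}) that this sentence compresses.
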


The groups $H^{q}(T^{(j)}, T^{(j)}\cap F^{(j+1)})$ appearing in Proposition \ref{t:izos} can be described by an iteration of nearby and vanishing cycle functors, as follows. 
\begin{proposition}\label{t:it}
For any integers $q\geq 1$ and $j \geq 1$, 
\be\label{MP1}
H^q(F^{(j)}) \cong \cH^q(\psi_{l_j}\ldots \psi_{l_1}\psi_f \underline{\bZ}_X)_0,
\ee
\be\label{MP2}
H^{q}(T^{(j)}, T^{(j)}\cap F^{(j+1)})\cong \cH^q(\psi_{l_{j+1}}\psi_{l_j}\ldots \psi_{l_1}\varphi_f \underline{\bZ}_X)_0.
\ee
\end{proposition}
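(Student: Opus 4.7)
The plan is to prove both formulas by induction on $j$, iterating the base change and transversality arguments already used in \S\ref{Lea} and \S\ref{Tib} to establish the single-slice cases \eqref{314n} and \eqref{geoi}. The main sheaf-theoretic input is the standard stalk formula for nearby cycles: for any bounded constructible complex $\cG$ on a complex analytic germ $(Z,0)$ and any generic linear function germ $l$ (generic with respect to the Whitney stratification on which $\cG$ is constructible),
\[
\cH^q(\psi_l \cG)_0 \cong \bH^q\bigl(Z \cap l^{-1}(\eta) \cap B_\epsilon;\ \cG|_{Z \cap l^{-1}(\eta)}\bigr),
\]
together with the base change formula for nearby cycles along transversal slices that was already invoked in \eqref{transversal}.

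For \eqref{MP1}, I would start the induction at $j=1$, which is exactly \eqref{314n}. For the inductive step, setting $\cG_{j-1} := \psi_{l_{j-1}}\cdots\psi_{l_1}\psi_f\underline{\bZ}_X$, the stalk formula above gives
\[
\cH^q(\psi_{l_j}\cG_{j-1})_0 \cong \bH^q\bigl(Y^{(j-1)}\cap l_j^{-1}(\eta_j)\cap B_\epsilon;\ \cG_{j-1}|_{Y^{(j-1)}\cap l_j^{-1}(\eta_j)}\bigr).
\]
The genericity of $l_j$ ensured in \S\ref{setnot} makes $l_j^{-1}(\eta_j)$ transversal to the fixed Whitney stratification near $0$; iterated application of the base change \eqref{transversal} then identifies the restricted $\cG_{j-1}$ with the analogous iterated nearby cycle complex on the sliced space $X\cap l_j^{-1}(\eta_j)$. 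Applying the inductive hypothesis to this restricted data identifies the right-hand side with $H^q(F^{(j)})$.

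For \eqref{MP2}, the strategy is the same but with $\varphi_f$ in place of the innermost $\psi_f$. The base case combines \eqref{geoi} with the retraction isomorphism \eqref{eq:retr}; the inductive step carries out the same generic-slice base change argument. Alternatively, one can derive \eqref{MP2} from \eqref{MP1} by applying the iterated functor $\psi_{l_{j+1}}\cdots\psi_{l_1}$ to the canonical distinguished triangle
\[
\underline{\bZ}_Y \to \psi_f\underline{\bZ}_X \to \varphi_f\underline{\bZ}_X \overset{[1]}{\to}
\]
on $Y$, and then inspecting the associated stalk long exact sequence at $0$ with the geometric interpretation of the terms provided by Proposition~\ref{t:izos}.

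The main technical obstacle will be verifying that genericity propagates through all $j$ iterations, so that each base change does commute the inner iterated nearby cycles past restriction to the successive transversal slices. This reduces to the fact that the Whitney stratification of $X$ induces (by the transversal choice of the linear forms) a Whitney stratification of each $X^{(k)}$ with respect to which the remaining $l_i$'s and $f$ are still generic, which is exactly the setup secured in \S\ref{setnot} via the fibration \eqref{genc} and the ordering $0<\gamma\ll\eta_j\ll\cdots\ll\eta_1\ll\epsilon$.
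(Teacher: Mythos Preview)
Your proposal is correct and essentially matches the paper's proof. For \eqref{MP1} the paper also proceeds by induction from \eqref{314n} using genericity (cf.\ Remark~\ref{rem36}); for \eqref{MP2} the paper takes precisely your second alternative, applying $\psi_{l_{j+1}}\cdots\psi_{l_1}$ to the triangle $\underline{\bZ}_Y \to \psi_f\underline{\bZ}_X \to \varphi_f\underline{\bZ}_X$ and identifying the stalk long exact sequence with that of the pair $(F_D^{(j+1)},F^{(j+1)})$, where the term $\cH^q(\psi_{l_{j+1}}\cdots\psi_{l_1}\underline{\bZ}_Y)_0$ is identified with $H^q(F_D^{(j+1)})$ directly via the homotopy equivalence to the iterated complex link $\lk(Y^{(j)},0)$ rather than by invoking Proposition~\ref{t:izos}.
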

\begin{proof}
The isomorphism \eqref{MP1} is the extension of \eqref{314n}  by iteration, and follows from it by induction using the genericity of the linear functions $l_1,\ldots, l_j$ (see also Remark \ref{rem36}).

We now prove \eqref {MP2}. By \eqref{eq:retr}, we have an isomorphism
\be 
H^{q}(T^{(j)}, T^{(j)}\cap F^{(j+1)}) \cong H^q(F_D^{(j+1)}, F^{(j+1)}),
\ee
with 
\be\label{jpl1} F_D^{(j+1)}:=B_\epsilon \cap (f^{(j)})^{-1}(D_\gamma) \cap l_{j+1}^{-1}(\eta_{j+1})\simeq B_\epsilon \cap (f^{(j)})^{-1}(0) \cap l_{j+1}^{-1}(\eta_{j+1})=\lk(Y^{(j)},0),
\ee
with $\gamma, \eta_{j+1}$ sufficiently small.
To prove \eqref{MP2} it is therefore sufficient to show that  
\be\label{MP3}
H^q(F_D^{(j+1)}, F^{(j+1)})\cong \cH^q(\psi_{l_{j+1}}\psi_{l_j}\ldots \psi_{l_1}\varphi_f \underline{\bZ}_X)_0.
\ee 
Applying the functor $\psi_{l_{j+1}}\psi_{l_j}\ldots \psi_{l_1}$ to the distinguished triangle \eqref{bas} on $Y$, we get a new distinguished triangle on $Y^{(j+1)}$:
\be\label{bas2}\psi_{l_{j+1}}\ldots \psi_{l_1}\underline{\bZ}_Y \to \psi_{l_{j+1}}\ldots \psi_{l_1}\psi_{f} \underline{\bZ}_X \to \psi_{l_{j+1}}\ldots \psi_{l_1}\varphi_f \underline{\bZ}_X \overset{[1]}{\to}\ee
By applying the cohomological functor $\cH^*(-)_0$ to \eqref{bas2}, we get a long exact sequence
\be\label{lesT25}
\cdots \to \cH^q(\psi_{l_{j+1}}\ldots \psi_{l_1}\underline{\bZ}_Y)_0 \to  \cH^q(\psi_{l_{j+1}}\ldots \psi_{l_1}\psi_{f} \underline{\bZ}_X)_0 \to \cH^q(\psi_{l_{j+1}}\ldots \psi_{l_1}\varphi_f \underline{\bZ}_X)_0 \to \cdots
\ee
As already shown in \eqref{MP1}, 
\be\label{2155a}  \cH^q(\psi_{l_{j+1}}\ldots \psi_{l_1}\psi_{f} \underline{\bZ}_X)_0  = H^q(F^{(j+1)}),\ee
and the same formula yields that
\be\label{2155b}
\cH^q(\psi_{l_{j+1}}\ldots \psi_{l_1}\underline{\bZ}_Y)_0 \cong H^q\big(B_\epsilon \cap Y \cap  l_1^{-1}(\eta_1) \cap \cdots l_{j+1}^{-1}(\eta_{j+1})\big)
\ee
for small enough $\eta_1, \ldots, \eta_{j+1}$.
By the genericity of the linear functions $l_1, \ldots, l_{j+1}$, there is a homotopy equivalence $$B_\epsilon \cap Y \cap  l_1^{-1}(\eta_1) \cap \cdots \cap l_{j+1}^{-1}(\eta_{j+1}) \simeq 
B_\epsilon \cap Y \cap  l_1^{-1}(0) \cap \cdots \cap l_{j}^{-1}(0) \cap l_{j+1}^{-1}(\eta_{j+1}) = \lk(Y^{(j)},0),$$
defined by moving $(\eta_1, \ldots, \eta_j, \eta_{j+1})$ along a straight path to $(0, \ldots, 0, \eta_{j+1})$.
Combining this fact with \eqref{jpl1} and \eqref{2155b}, we therefore have an isomorphism
\be\label{2156a}
\cH^q(\psi_{l_{j+1}}\ldots \psi_{l_1}\underline{\bZ}_Y)_0 \cong H^q(F_D^{(j+1)}).
\ee
Plugging \eqref{2155a} and \eqref{2156a} into the long exact sequence \eqref{lesT25} yields \eqref{MP3}, thus completing the proof.
\end{proof}

\begin{remark}\label{rem36}
Note that, by the genericity of the linear forms $l_{i}$,  the map $(f, l_{1}, \ldots , l_{j})$, for $j\ge 1$, defines a stratified  isolated singularity at the origin (e.g., it is an ICIS in case of $X = \bC^{n+1}$), and therefore it is an open map at the origin of the target and defines a stratified fibration outside its discriminant (which is actually included in a hypersurface).  This is a particular case of  a map ``sans \' eclatement''  in the terminology of Sabbah \cite{Sa}. Consequently, $\cH^q(\psi_{l_j}\ldots \psi_{l_1}\psi_f \underline{\bZ}_X)_0$ in formula \eqref{MP1} is independent of the order of the nearby cycle functors in the sequence. A non-generic situation occurs in \cite[Section 3]{MP}, where the iterated nearby cycles do not commute anymore, where the map may not be open\footnote{We may refer to \cite{JT} for a study of images of map germs in relation with singular fibrations.} anymore, and where the stratified fibration has a very special meaning depending on the {\it order} of the functions in the map. 
A formula similar to \eqref{MP1} also holds in this general situation after fixing an ordering of the defining functions, see \cite[Formula (8)]{MP}.   
\end{remark}

\subsection{Betti bounds and polar multiplicities}\label{ss:bettibounds1} \ 
In this section we indicate how the slicing technique yields Betti bounds for the Milnor fiber. 

We begin with describing an upper bound of the top Betti number of the Milnor fiber. While this result is known (e.g., see \cite[Corollary 2.3]{Ti-nonisol}), for completeness we include here a proof in the spirit of \S  \ref{Tib}. 
\bc\label{cor32}\rm (\cite[Corollary 2.3]{Ti-nonisol},  \cite[Theorem 3.3]{Ma95}) \it
\be\label{c23b}
{b}_n(F) \leq  \lambda^0+ {b}_{n}(\lk(X,0)),\ee
where  $\lambda^0:=\tau_f - \tau_l={\rm int}_0(\Gamma, f^{-1}(0))-{\rm int}_0(\Gamma, l^{-1}(0)).$
\ec
\begin{proof}
By applying the functor  $\varphi_{l}$  to the distinguished triangle \eqref{bas} on $Y$,
we get a new distinguished triangle
\[
\varphi_{l}\underline{\bZ}_Y \to \varphi_{l}\psi_{f} \underline{\bZ}_X \to \varphi_{l}\varphi_f \underline{\bZ}_X \overset{[1]}{\to}
\]
on $Y'$, which, upon applying the functor $\cH^*(-)_0$ of taking the stalk cohomology at the origin and the fact mentioned earlier that $\cH^k(\varphi_{l}\varphi_f{\underline{\bZ}_X})_0=0$ for all $k \neq n-1$, yields the short exact sequence
\be\label{lesT3}
0 \to \cH^{n-1}(\varphi_{l}\underline{\bZ}_Y)_0 \to  \cH^{n-1}(\varphi_{l}\psi_{f} \underline{\bZ}_X)_0 \to \cH^{n-1}(\varphi_{l}\varphi_f \underline{\bZ}_X)_0 \to 0.
\ee
Using the notation from \eqref{eq:slicing2}, we get from \eqref{eq:slicing4b} and \eqref{lesT3} together with \eqref{i62} that 
\be\label{c23}
\widetilde{b}_n(F) \leq \tau_f - \widetilde{b}_{n-1}(\lk(Y,0)).
\ee
On the other hand, since $\underline{\bZ}_X[n+1]$ is strongly perverse on $X$, the complex link $\lk(X,0)$ of $X$ at the origin has its reduced cohomology concentrated in degree $n$ (e.g., see \cite[Corollary 10.6.3]{Max}), where it is free (by the freeness of lowest degree costalks). Similarly, since $\underline{\bZ}_Y[n]$ is strongly perverse on $Y$ (cf. \S\ref{nv}), the reduced cohomology of the complex link $\lk(Y,0)$ of $Y$ at the origin is concentrated in degree $n-1$, where it is free (again, by the freeness of costalks). The long exact sequence for the reduced cohomology of the pair $\big(\lk(X,0), \lk(Y,0)\big)$ then yields that $$H^n\big(\lk(X,0), \lk(Y,0)\big) \cong \bZ^{\tau_l}$$ is free, with $$\tau_l= \widetilde{b}_{n}(\lk(X,0))+ \widetilde{b}_{n-1}(\lk(Y,0)).$$
It is known by work of L\^e (see, e.g., \cite[Facts 1.1(b,c)]{Ti-nonisol} and the references therein) that $$\tau_l={\rm int}_0(\Gamma, l^{-1}(0))$$ 
is a corresponding polar intersection number for $l$. Our claim follows now from  \eqref{c23}.
\end{proof}

 The following two relations are consequences of iterated slicing and repeated application of  \eqref{eq:slicing1} and \eqref{eq:slicing2}:
 \begin{equation}
  b_{n-k}(F) =  b_{n-k}(F^{(k-1)})  \mbox{ \ \ and \ \  }  b_{n-k}(F) \le b_{n-k}(F^{(k)}).
\end{equation}
These yield  the inequality: 
\begin{equation}\label{eq:lef_ineq}
b_{n-k}(F^{(k-1)}) \le b_{n-k}(F^{(k)}), \ \  \mbox{ \ \ for \ \  } k\ge 1,
\end{equation}
which can be used to get the following generalization of Corollary \ref{cor32} (compare also with \cite[Corollary 2.3 ]{Ti-nonisol} and \cite{Ma0}): 
\begin{corollary}\label{t:bettibounds}
Let $s\ge 1$. For any $k = 0,\ldots ,s-1$, we have the bounds:
\begin{equation}\label{eq:lefk}
 b_{n-k}(F) \le b_{n-k}(F^{(k)}) \le   \lambda^{k} + b_{n-k}(\lk^{k}(X,0)), 
\end{equation}
  where 
\begin{equation}\label{eq:lambda}
\lambda^{k} :=
  \int_0(\Gamma(l_{k},f^{(k)}) , (f^{(k)})^{-1}(0)) - \int_0(\Gamma(l_{k},f^{(k)}), (l_{k})^{-1}(0)), 
  \end{equation}
  \fin
\end{corollary}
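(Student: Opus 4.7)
The plan is to combine two ingredients: an iterated Lefschetz-style inequality coming from the slicing isomorphisms \eqref{eq:slicing1} and exact sequence \eqref{eq:slicing2}, and the top-degree Betti bound of Corollary \ref{cor32} applied to the sliced germ.

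\textbf{First inequality.} I would first establish the one-step version \eqref{eq:lef_ineq}. Apply \eqref{eq:slicing1} and \eqref{eq:slicing2} to the slicing of $f^{(k-1)}:(X^{(k-1)},0)\to(\bC,0)$ by the generic hyperplane $H_k = \{l_k=0\}$, which produces $f^{(k)}$ with Milnor fibre $F^{(k)}$. Since $F^{(k-1)}$ has complex dimension $n-k+1$, the isomorphism \eqref{eq:slicing1} gives $H^{j}(F^{(k-1)}) \cong H^{j}(F^{(k)})$ for $j < n-k$, while in degree $j=n-k$ the exact sequence \eqref{eq:slicing2} supplies a monomorphism $H^{n-k}(F^{(k-1)}) \hookrightarrow H^{n-k}(F^{(k)})$; taking ranks yields \eqref{eq:lef_ineq}. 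Iterating from $i=0$ to $i=k-1$ produces a chain
\[
 b_{n-k}(F) = b_{n-k}(F^{(1)}) = \cdots = b_{n-k}(F^{(k-1)}) \le b_{n-k}(F^{(k)}),
\]
where equality holds at the intermediate steps because $n-k < n-i-1$ for $i<k-1$, and the final inequality is the monomorphism from \eqref{eq:slicing2}. This is the first inequality of \eqref{eq:lefk}.

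\textbf{Second inequality.} For this I would apply Corollary \ref{cor32} directly to the sliced germ $f^{(k)}:(X^{(k)},0)\to(\bC,0)$. The space $X^{(k)}$ has pure complex dimension $(n+1)-k$, so its top Milnor Betti number is in degree $n-k$, matching the index in \eqref{eq:lefk}. By transversality of the generic forms $l_1,\dots,l_k$ to the Whitney stratification of $X$, the shifted constant sheaf $\underline{\bZ}_{X^{(k)}}[(n+1)-k]$ is again strongly perverse on $X^{(k)}$; this is the iterated version of the normal-slice argument already carried out inside the proof of Theorem \ref{c14a}(d), where the relation $i_N^! \cong i_N^*[-2]$ between generic-slice inclusions shows that both perversity and the freeness of lowest-degree costalks descend to generic hyperplane slices. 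Because $k\le s-1$, the stratified singular locus $\Sigma^{(k)}=\Sigma\cap H_1\cap\cdots\cap H_k$ remains of positive dimension, so the hypotheses of Corollary \ref{cor32} hold. Applying \eqref{c23b} to $f^{(k)}$ with a further generic linear form gives
\[
 b_{n-k}(F^{(k)}) \le \lambda^{k} + b_{n-k}\bigl(\lk(X^{(k)},0)\bigr) = \lambda^{k} + b_{n-k}(\lk^{k}(X,0)),
\]
with $\lambda^{k}$ the polar intersection difference in \eqref{eq:lambda}, computed from the generic form used to slice $f^{(k)}$ one further time (the symbol $l_k$ in \eqref{eq:lambda} is to be read in this sense; what matters is only genericity).

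\textbf{Main obstacle.} The substance of the argument is the one-step Lefschetz-type comparison encoded in \eqref{eq:slicing1}--\eqref{eq:slicing2} and the top-degree bound \eqref{c23b}; both have already been set up. The only real checking required is the propagation of the hypothesis $\rhd(X,\bZ)=n+1$ along the tower $X\supset X^{(1)}\supset\cdots\supset X^{(k)}$, which amounts to verifying that strong perversity of the shifted constant sheaf is preserved under generic linear slicing. This follows inductively from the transversality identity $i_N^!\cong i_N^*[-2]$ and the freeness-of-costalks criterion of Proposition \ref{pid}, exactly as in the proof of Theorem \ref{c14a}(d). Once this is in place, the two inequalities in \eqref{eq:lefk} are obtained by concatenating the iterated slicing chain with a single application of Corollary \ref{cor32} at level $k$.
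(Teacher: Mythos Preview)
Your proposal is correct and follows essentially the same approach as the paper, which gives no explicit proof (the corollary ends with a \fin\ box) but indicates just before the statement that it is obtained by combining the iterated slicing inequality \eqref{eq:lef_ineq} with Corollary \ref{cor32} applied to the sliced germ $f^{(k)}$. You have correctly fleshed out the details, including the technical check that strong perversity of the shifted constant sheaf descends along generic hyperplane slices (needed to invoke Corollary \ref{cor32} at level $k$), and your remark about the indexing of the linear form in \eqref{eq:lambda} is a fair observation on the paper's notation.
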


\begin{remark}
 The numbers $\lambda^{*}$ which occur in \eqref{eq:lambda} are the analogues of the sectional Milnor numbers $\mu^{*}$ defined by Teissier \cite{Te, Te-p}, in the sense that $\mu^{*}$ verify the same equality  \eqref{eq:lambda} in case $X =\bC^{n+1}$ and $f$ has an {\it isolated singularity}.  In Massey's terminology, $\lambda^{*}$ are called ``L\^{e} numbers'', see e.g.  \cite{Ma94}.
 
 \end{remark}


   \section{The geometric computation of  $\widetilde H^{n-s}(F)$}\label{s:Hn-s} 

 In this section we compute the lowest (possibly nontrivial) group $\widetilde H^{n-s}(F)$  by using the isomorphism with the vanishing cohomology of the nearby section.\footnote{All these computations also apply for an admissible deformation of $f$.}  
 Throughout this section we continue to use the previous assumptions, namely that $(X,0)$ is a pure $n+1$ dimensional space germ with $\r(X,\bZ)=n+1$,  $n\ge 3$ and $\dim \Sing_{\cW} f = s\ge 2$.

From Proposition \ref{t:izos} for $k=s$ and $j=s-2$, we have:
 
\begin{equation}\label{eq:new}
 \widetilde H^{n-s}(F)  \cong \widetilde H^{n-s}(F^{(s-2)})  \cong  H^{n-s+1}(T^{(s-2)}, T^{(s-2)}\cap F^{(s-1)}).
\end{equation}

 Consequently, 
 $$\widetilde H^{n-s}(F) \cong \widetilde H^{m-2}(F_{g})$$ 
  where $g:= f_{|\cH}$, $\cH := H_{1}\cap \cdots \cap H_{s-2}$, $m:= n-s+2$,  with $\dim \Sing_{\cW\cap \cH} (g) =2$, and $F_{g}$ denotes the Milnor fibre of the function germ $g$.

By the above isomorphism, it is sufficient  to compute $\widetilde H^{m-2}(F_{g})$ in order to obtain $\widetilde H^{n-s}(F)$, and therefore the remainder of this section will be devoted to this computation. For the sake of simplicity we 
will use the notation  $f$ instead of $g$,  assume that $\dim_{0}\Sing_{\cW}= 2$, and thus compute $\widetilde H^{n-2}(F)$.


 \subsection{Computing the vanishing cohomology of the nearby section}\label{s:s=2}  

Let $n\ge 3$ and $s =\dim \Sing f =2$. As before, we  denote by $\Sigma_{2}$ the union of the strata of $\Sing f$ of dimension 2, and by $\Sigma_{1}$  the union of the 1-dimensional strata of  $\Sing f$. We may have singular 1-dimensional strata which are inside or outside $\overline{\Sigma_{2}}$.
   
The cohomology of the Milnor fiber $\widetilde H^{*}(F)$ is known to be concentrated only in degrees $n, n-1$ and $n-2$.
From \eqref{eq:slicing3b} and \eqref{eq:slicing4b} we know that it is computed by the vanishing cohomology of the nearby section in dimension $n-2$, namely    we have  the isomorphism: 
\[\widetilde H^{n-2}(F)  \cong H^{n-1}(T, T\cap F^{(1)}),\] 
and the inclusion:
\[\widetilde H^{n-1}(F)  \hookrightarrow H^{n}(T, T\cap F^{(1)}),\] 
where we recall that $T$ is a small tubular neighborhood of $B_{\e}\cap \Sing f \cap  l^{-1}(\eta)$ in the slice $l^{-1}(\eta)$,
and $F^{(1)}:= B_{\e}\cap l^{-1}(\eta) \cap f^{-1}(\gamma)$. 

We therefore focus on the computation of the vanishing cohomology $H^*(T, T\cap F^{(1)})$ of the nearby section.

 The singular slice  $$S := \overline{\Sigma_{2}} \cap l^{-1}(\eta)$$ is a union of curves, so let $S = S_1 \cup \ldots \cup S_{\rho}$ be its decomposition into irreducible components.  The finite set of points
$\Sigma_{1} \cap l^{-1}(\eta)$ is the union $R\sqcup \cup_{i=1}^{\rho} Q_{i}$, where $R$  is the set of all {\it isolated singularities} outside  $S$  of the slice  $B_{\e}\cap \Sing f \cap  l^{-1}(\eta)$, 
and  where $Q_{i}:= S_i \cap \Sigma_{1}$ will be called the set of {\it special points} of  $S_i$.
Note that if the curve components $S_i$ and $S_j$ intersect, for $i\not= j$, then the intersection points belong to $\Sigma_{1}$, and thus $S_i\cap S_{j}  \subset Q_{i}$ for any $i\not= j$.

Let   $S^* := S \m  \cup_{i=1}^{\rho} Q_{i}$,  and $S_{i}^* := S_{i} \m  Q_{i}$ for each $i$. 
 
Let $B_{r}$ be a Milnor ball at  the point $r\in R$ in the slice $l^{-1}(\eta)$, and let $T_S$ denote a small tubular neighborhood of $S$ in the same slice. Then $F_{r}:= B_{r}\cap F^{(1)}$ is the Milnor fibre of an isolated hypersurface singularity at $r$, and therefore its reduced cohomology is concentrated in the top dimension.
With these notations one has:

\begin{lemma}
  \begin{equation}\label{eq:decomp}
  \begin{split}
\widetilde H^{n-2}(F) \cong  H^{n-1}(T, T\cap F^{(1)})  \cong  H^{n-1}(T_S, T_S \cap F^{(1)}) \hspace{2cm}   \\
\widetilde  H^{n-1}(F) \hookrightarrow   H^{n}(T, T\cap F^{(1)})  \cong  H^{n}(T_S, T_S \cap F^{(1)}) \oplus  \bigoplus_{r\in R} H^{n-1}(F_{r})
 \end{split}
\end{equation}
\end{lemma}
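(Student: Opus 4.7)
My plan is to observe that the first isomorphism in each row of \eqref{eq:decomp} is essentially already in hand from the paper's earlier work, while the second isomorphism (and direct sum decomposition) is the real content. Indeed, specializing \eqref{eq:slicing3b} and \eqref{eq:slicing4b} to the case $s=2$ and combining with \eqref{eq:retr} gives directly
\[
\widetilde H^{n-2}(F)\cong H^{n-1}(T,T\cap F^{(1)}),\qquad \widetilde H^{n-1}(F)\hookrightarrow H^{n}(T,T\cap F^{(1)}),
\]
so the task reduces to identifying $H^{n-1}(T,T\cap F^{(1)})$ and $H^{n}(T,T\cap F^{(1)})$ with the asserted right-hand sides.

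The key geometric observation is that, by construction, the complex link $\lk(\Sigma,0)=B_\epsilon\cap\Sigma\cap l^{-1}(\eta)$ decomposes as the \emph{disjoint} union $S\sqcup R$ of the curve $S=\overline{\Sigma_2}\cap l^{-1}(\eta)$ and the finite set $R$ of isolated singularities lying outside $S$. Since $S$ and $R$ are closed and disjoint in the smooth slice $l^{-1}(\eta)$, the tubular neighborhood $T$ of $\lk(\Sigma,0)$ may be chosen as the disjoint union $T=T_S\sqcup\bigsqcup_{r\in R}B_r$, where $T_S$ is a small tubular neighborhood of $S$ and $B_r$ is a Milnor ball at $r$ in $l^{-1}(\eta)$ (playing the role of a tubular neighborhood of the $0$-dimensional part $R$). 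By additivity of cohomology over disjoint unions, applied to the pair, one obtains
\[
H^{k}(T,T\cap F^{(1)})\;\cong\;H^{k}(T_S,T_S\cap F^{(1)})\;\oplus\;\bigoplus_{r\in R}H^{k}(B_r,B_r\cap F^{(1)}).
\]

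The next step is to evaluate the summands indexed by $R$. Since each $B_r$ is contractible and $B_r\cap F^{(1)}=F_r$ is the Milnor fibre of the isolated hypersurface singularity $f|_{l^{-1}(\eta)}$ at $r$, the long exact sequence of the pair $(B_r,F_r)$ yields $H^{k}(B_r,F_r)\cong\widetilde H^{k-1}(F_r)$. Now, because $l^{-1}(\eta)$ is transverse to the stratification of $X$, the assumption $\r(X,\bZ)=n+1$ passes to $\r(X\cap l^{-1}(\eta),\bZ)=n$ as in the argument used in Theorem~\ref{c14a}(d); hence $F_r$ is the Milnor fibre of an isolated singularity on a space of the correct rectified homological depth, and its reduced cohomology is concentrated in degree $n-1$ (this is the appropriate extension of Milnor's bouquet theorem; see e.g.\ the discussion preceding \eqref{ad1e}). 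Plugging in $k=n-1$ (resp.\ $k=n$) makes the sum over $R$ vanish (resp.\ collapse to $\bigoplus_{r\in R}\widetilde H^{n-1}(F_r)$), giving exactly the two formulas in \eqref{eq:decomp}.

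The only step requiring genuine care is the concentration of $\widetilde H^\ast(F_r)$ in degree $n-1$, which is where the transversality of the generic slice and the perversity hypothesis on $\underline{\bZ}_X[n+1]$ enter; everything else is a formal consequence of the disjoint union decomposition of $T$ and the long exact sequence of a contractible pair. I expect no further obstacle once this concentration is invoked.
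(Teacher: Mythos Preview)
Your proof is correct and follows essentially the same approach as the paper: decompose $T$ as the disjoint union $T_S\sqcup\bigsqcup_{r\in R}B_r$, use additivity of relative cohomology, and then invoke the concentration of $\widetilde H^*(F_r)$ in degree $n-1$ to kill the $R$-summands in degree $n-1$. The paper's own proof is more terse (it simply cites that $F_r$ is the Milnor fibre of an isolated hypersurface singularity, hence has reduced cohomology concentrated in the top degree), but your added justification via transversality and rectified homological depth is appropriate in the general setting where $X$ need not be smooth.
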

\begin{proof}
   We have the direct sum decomposition: 
 \[ H^{*}(T, T\cap F^{(1)})  \cong \bigoplus_{r\in R} H^{*}(B_{r}, B_{r}\cap F^{(1)}) \oplus H^{*}(T_S, T_S \cap F^{(1)}).
\]
 Since $H^{*}(B_{r}, B_{r}\cap F^{(1)}) \cong H^{*-1}(F_{r})$  is concentrated in dimension $* =n$, the first term is a direct summand of $H^{n}(T, T\cap F^{(1)})$ and the assertion follows. 
\end{proof}


  \subsection{Cohomology of  $(T_S,T_S \cap F^{(1)})$} \label{ss:cw}\ 
 To study the cohomology of the pair $(T_S,T_S \cap F^{(1)})$, we inspire ourselves from the technique displayed  in \cite{ST-deform} and \cite{ST-vanishing}.
 
  
The small tubular neighborhood $T_S$ is the union $T_S^{*}\cup B_{Q}$ of  
the tubular neighborhood $T_S^{*}$ of  $S^*$,  together with $B_Q := \bigsqcup_{q\in Q} B_q$, where $B_q$ denotes a  small enough Milnor ball at $q\in Q$ in the slice $l^{-1}(\eta)$.

  Each component $S_i$ has a generic transversal Milnor fiber $F_{i}^{\pitchfork}$, on the cohomology of which
 $\pi_{1}(S^{*}_{i})$ acts.  Let  $\tmu_i$ denote its Milnor number.
 
The pair 
$$(T_S,T_S \cap F^{(1)}) = (T_S^{*}\cup B_Q, (T_S^{*}\cap F^{(1)}) \cup (B_Q\cap F^{(1)}))$$
  comes into a relative Mayer-Vietoris long exact sequence 
\begin{equation}\label{eq:mv}
\begin{split}
 \cdots \to H^{*}(T_S,T_S \cap F^{(1)})  \to H^{*}(T_S^{*}, T_S^{*}\cap F^{(1)})  \oplus  H^{*}(B_Q, B_Q\cap F^{(1)})   \to  \\
  \to     H^{*}(T_S^{*}\cap B_Q, (T_S^{*}\cap F^{(1)}) \cap (B_Q\cap F^{(1)}))     \to \cdots \ \ \ \ \ \ \ \ \ \ \ \ \ \ \ \ \ \ \ \ \  
 \end{split}
\end{equation}
that we analyze in the following. 

\subsubsection{The intersection term  $(T_S^{*}\cap B_Q, (T_S^{*}\cap F^{(1)}) \cap (B_Q\cap F^{(1)}))$}\label{ss:index}

 This is a disjoint union  of pairs localised at points  $q \in Q$, namely  one pair for each local irreducible branch of the germ $(S,q)$. 
 
 Let  $K_q$ be the set of indices for the irreducible branches at $q\in Q$. 
We use the same set of indices $K_{q}$ for the corresponding small loops around $q$ in $S^{*}$.

When we are counting the local irreducible branches at some point $q\in Q_i$ on a specified component $S_i$ then the set of indices for the local branches of the curve germs $(S_i, q)$ will be denoted by $K_{qi}$. 
 
With these notations we get the following decomposition:
\begin{equation}\label{eq:sumdecomp}
 H^{*}(T_{S}^{*}\cap B_Q, (T_{S}^{*}\cap F^{(1)}) \cap (B_Q\cap F^{(1)})) \cong  \bigoplus_{q\in Q} 
\bigoplus_{k\in K_q} H^{*}(\cZ_k, \cC_k).
\end{equation}

One such local pair $(\cZ_k, \cC_k)$ is the bundle over the  component of the link of the corresponding irreducible branch of the curve germ $(S, q)$, having as fiber the local transversal Milnor data $(\tE_k, \tF_k)$. These data depend only on the component $S_i$ containing the local branch indexed by $k$,  and therefore  we  have $(\tE_k, \tF_k) = (\tE_i, \tF_i)$, and in particular the Milnor numbers equality $\tmu_{k} = \tmu_{i}$.

\medskip
\label{eq:circbundle}

The relative cohomology groups in the  direct sum decomposition \eqref{eq:sumdecomp} are concentrated in dimensions $n-1$ and $n$, and depend on the local system  monodromy $\nu_k$ along the link $\cC_k$, by the following \emph{Wang sequence}\index{Wang sequence}:
\begin{equation}\label{eq:wang2}
 0 \rightarrow H^{n-1} (\cZ_k,\cC_k) \rightarrow H^{n-1} (\tE_k,\tF_k)
\stackrel{\nu_k -\id}{\rightarrow} H^{n-1} (\tE_k,\tF_k) \rightarrow H^{n}
(\cZ_k,\cC_k) \rightarrow 0,
\end{equation}
from which one gets the isomorphisms: 
\[
H^{n-1}(\cZ_k,\cC_k) \cong {\ker} \; (\nu_k - \id \mid  H^{n-2}(\tF_k)), \ \ \ \   H^{n}(\cZ_k,\cC_k ) \cong {\coker} \; (\nu_k - \id \mid  H^{n-2}(\tF_k))
\]
  for any $k\in K_{q}$, $q\in Q$.

\subsubsection{Cohomology of $(B_Q, B_Q\cap F^{(1)})$}\label{ss:term}
Since $B_Q$ is a disjoint union, one has the direct sum decomposition $$H^{*}(B_Q, B_Q\cap F^{(1)}) \cong \bigoplus_{q\in Q} H^*(B_q,F_q),$$ into the local Milnor data of the hypersurface germs $(f^{-1}(0)\cap l^{-1}(\eta), q)$ which have 1-dimensional singular locus. Therefore the relative cohomology $H^k(B_q,F_q)\cong \widetilde H^{k-1}(F_q)$ is non-zero only for $k=n-1$ and $k=n$.

\subsubsection{Cohomology of  $(T_{S}^{*}, T^{*}_{S}\cap F^{(1)})$} \label{ss:cw2} 

Each irreducible  1-dimensional component $\Sigma^{0}_{i}$ of $\Sing f_{|l=0}$ is a curve germ, and hence its link $\partial B_{\e} \cap \Sigma^{0}_{i}$ is a circle. Our $S \sqcup R$ is an {\it admissible deformation} of $\Sing f_{|l=0}$ in the sense\footnote{Moreover,  $f_{|l=\eta}$ is an admissible deformation of $f_{|l=0}$ in $B_{\e}$.} of \cite[\S 3.1]{ST-deform}, i.e., an isotopy at the boundary  $\partial B_{\e}$. Therefore 
the boundary $\partial B_{\e} \cap S_i$ of the irreducible curve $S_i$ is diffeomorphic to the link 
of the corresponding $\Sigma^{0}_{i}$, hence it  is diffeomorphic to a circle. 

We call this circle the {\it outside loop} of $S^{*}_i$, for any $i$, and denote by $U$ the set of outside loops of $S^{*}$.  Note that over any loop $u_{i} \in U$ we have a local system monodromy 
 $\nu_{u_{i}}:  \bZ^{\tmu_i} \rightarrow \bZ^{\tmu_i}$, and this monodromy is invariant in the admissible deformation.

Starting from the outside loop $u_{i}$, we retract the Riemann surface  with boundary $S^*_i$ to a bouquet configuration $\Gamma_i$ of  loops connected by simple paths which have a single base point $z_{i}\in S^*_i$.  These loops will be indexed by the set $W_{i}$. Let us count how many loops are in $W_{i}$.  There are $2g_{i}$ so-called {\it genus loops}, where $g_{i}$ is the genus of the normalisation of the compact singular Riemann surface obtained from $S_{i}$ by contracting its circle boundary to one point. Besides those loops, there are  $\tau_{i} = \sum_{q\in Q_{i}}\# K_q$ loops, where the set $K_q$ introduced at \S\ref{ss:index}  is indexing the local branches of $S_{i}$ at $q\in Q_{i}$. We thus have $\# W_{i} = 2g_{i} + \tau_{i}$.

The pair $(T_{S}^{*}, T_{S}^{*}\cap F^{(1)})$ decomposes into the connected pairs $(T_{i}^{*}, T_{i}^{*}\cap F^{(1)})$ along the curve components $S^*_i$.  Let us denote the projection of the tubular neighborhood by $\pi_S : T_{S}^{*} \to S^*$.
 Each  pair $(T_{i}^{*}, T_{i}^{*}\cap F^{(1)})$ is then homotopy equivalent
 to the pair  $(\pi_S^{-1}(\Gamma_i),  \pi_S^{-1}(\Gamma_i) \cap F^{(1)})$.  For each
 loop in the bouquet of loops $\Gamma_i$,  we have a pair similar to $(\cZ_k,\cC_k)$ defined above.
The difference is that the pairs $(\cZ_k,\cC_k)$ are disjoint whereas in $S^*_i$ 
the loops meet at a single point $z_i$. We therefore  take as reference the transversal
 fiber $\tF_i  := F^{(1)} \cap\pi_\Sigma^{-1}(z_i)$ over the point $z_i$. 

\medskip

For any $w\in W_i$, we  denote by $\nu_w$ the vertical monodromy along the loop in $S^{*}_{i}$ indexed by $w$.

Let 
\be \label{eq:hatA}
\hat{A}_{i} : \pi_{1}(S^{*}_{i}, z_{i}) \to \Aut (H^{n-2}(F^{\pitchfork}_{i};\bZ))
\ee
be the vertical monodromy representation, and let $H^{n-2}(F^{\pitchfork}_{i})^{\hat{A}_{i}}$
 denote the submodule of invariants. 
 Let us point out that, in general, this representation cannot be related by a Lefschetz slicing argument\footnote{i.e. the surjectivity of the map $\pi_{1}(S^{*}_{i}, z_{i}) \to \pi_1(\Sigma_{s,i}, z_{i})$ induced by inclusion.}
  to the monodromy representation defined at \eqref{vm}, because the Lefschetz argument needs the rectified homotopical depth condition for $\Sing f$ (see \cite{HL}, \cite[Sect. 9 and 10]{Ti-book}).  
However,  if the singular set has maximal rectified homological depth, then by \cite[Thm. 9.3.1]{Ti-book}, the equality 
$$H^{n-2}(F^{\pitchfork}_{i})^{A_{i}} = H^{n-2}(F^{\pitchfork}_{i})^{\hat{A}_{i}}$$
holds for any $i\in I$.
 In view of Theorem \ref{c14a}(c) and of the forthcoming Corollary \ref{c:no1diminside}, we conjecture that this later equality  holds in general.
\begin{lemma}\label{p:concentr}
\begin{enumerate}
\item  
We have the isomorphisms:
\[\begin{split}
H^{n-1}(T_{i}^{*}, T_{i}^{*}\cap F^{(1)}) \cong  \bigcap_{w\in W_i} \ker (\nu_w-\id \mid  H^{n-2}(\tF_i)),  \\
H^{n}(T_{i}^{*}, T_{i}^{*}\cap F^{(1)}) \cong \coker ( \bigoplus_{w\in W_i}(\nu_w-\id \mid  H^{n-2}(\tF_i)) ).
\end{split} \]
\item
 The Mayer-Vietoris exact sequence \eqref{eq:mv} is trivial except of the  6-terms sequence:
 \begin{equation}\label{eq:long} \small
 \begin{split}
 0\to H^{n-1}(T_S,T_S \cap F^{(1)})  \to \bigoplus_{i\in I} H^{n-2}(F^{\pitchfork}_{i})^{\hat{A}_{i}} 
 \oplus  \bigoplus_{q\in Q} H^{n-2}(F_q)  \stackrel{j}{\to}
  \bigoplus_{q\in Q} \bigoplus_{k\in K_q} {\ker} \; (\nu_k - \id) \\
 \to H^{n}(T_S,T_S \cap F^{(1)}) \to 
 \bigoplus_{i\in I}  \coker ( \bigoplus_{w\in W_i}(\nu_w-\id\mid  H^{n-2}(\tF_i)) ) \oplus \bigoplus_{q\in Q} H^{n-1}(F_{q}) \\
\to  \bigoplus_{q\in Q} \bigoplus_{k\in K_q} {\coker} \; (\nu_k - \id \mid  H^{n-2}(\tF_k)) \to  0.
\end{split}
\end{equation}

\item The following Euler characteristic formulae hold:
\begin{equation}\label{eq:euler0}
\chi(T_{i}^{*}, T_{i}^{*}\cap F^{(1)}) = (-1)^{n}(2g_i +\tau_i -1) \tmu_i,
\end{equation}
where $g_{i}$ is the genus of the normalization of $S_{i}$  and $\tau_i$ is the total  number of the local branches of $\Sigma_{i}$ at the points $Q_{i}$, \\
and:

\begin{equation}\label{eq:euler}
 \chi(T, T\cap F^{(1)}) =   - \sum_{q\in Q} (\chi(F_q)-1) +(-1)^{n} \sum_{i\in I} (2g_i +\tau_i -1) \tmu_i + (-1)^{n}\sum_{r\in R} \mu_{r}.
\end{equation}
\end{enumerate}
\end{lemma}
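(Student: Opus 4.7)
The plan is to prove the three parts in order, with part (1) doing the real work and parts (2)--(3) following by standard bookkeeping. Throughout I rely on the observation that each of the three pairs entering the Mayer--Vietoris sequence \eqref{eq:mv} has cohomology concentrated in degrees $n-1$ and $n$; once that is established, \eqref{eq:mv} collapses to the displayed 6-term sequence and the Euler characteristic formulae of (3) follow from additivity and multiplicativity.

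For (1), I would use that $\pi_S \colon T_i^* \to S_i^*$ is a topological fiber bundle with contractible fiber $\tE_i$, and that $T_i^* \cap F^{(1)}$ is a subfiber bundle of it with fiber $\tF_i$. Since $\tF_i$ is the Milnor fiber of an isolated hypersurface singularity of complex dimension $n-2$, its reduced cohomology lives only in degree $n-2$, so $H^j(\tE_i, \tF_i)$ vanishes except for $j = n-1$, where it equals $\bZ^{\tmu_i}$. Retracting $S_i^*$ onto the bouquet of loops $\Gamma_i$ and pulling the pair back, the Serre spectral sequence of the resulting fibration of pairs, $E_2^{p,q} = H^p(\Gamma_i; \mathcal{H}^q(\tE_i, \tF_i)) \Rightarrow H^{p+q}(T_i^*, T_i^* \cap F^{(1)})$, is supported on the line $q = n-1$ and in the strip $p \in \{0,1\}$, so it degenerates at $E_2$. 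I would then read off $H^p(\Gamma_i;\mathcal{L})$ for the local system $\mathcal{L}$ determined by the vertical monodromies $\nu_w$ from the CW cochain complex of $\Gamma_i$ with its one 0-cell and $|W_i|$ one-cells, obtaining $H^0 = \bigcap_w \ker(\nu_w - \id)$ and $H^1 = \coker\bigl(v \mapsto ((\nu_w-\id)v)_{w}\colon H^{n-2}(\tF_i) \to H^{n-2}(\tF_i)^{|W_i|}\bigr)$.

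For (2), combining part (1) with the Wang sequence \eqref{eq:wang2} (which places $H^*(\cZ_k, \cC_k)$ in degrees $n-1, n$) and with the fact that $F_q$ is the Milnor fiber of a hypersurface germ of dimension $n-1$ with 1-dimensional singular locus (hence $\widetilde H^*(F_q)$ lives only in degrees $n-2, n-1$, placing $H^*(B_q, F_q) \cong \widetilde H^{*-1}(F_q)$ in degrees $n-1, n$), I conclude that all three terms of \eqref{eq:mv} are concentrated in degrees $n-1, n$. The MV sequence then truncates to the 6-term sequence \eqref{eq:long}, once one identifies $\bigcap_{w \in W_i} \ker(\nu_w - \id)$ with $H^{n-2}(F_i^\pitchfork)^{\hat{A}_i}$ via the isomorphism $\pi_1(S_i^*) \cong \pi_1(\Gamma_i)$, the latter being the free group on the loops $w \in W_i$.

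For (3), identity \eqref{eq:euler0} follows from the multiplicativity of $\chi$ in fiber bundles: $\chi(\Gamma_i) = 1 - (2g_i + \tau_i)$ and $\chi(\tE_i, \tF_i) = 1 - (1 + (-1)^{n-2}\tmu_i) = (-1)^{n-1}\tmu_i$ multiply to give $(-1)^n(2g_i + \tau_i - 1)\tmu_i$. Formula \eqref{eq:euler} then follows from the disjoint decomposition $T = T_S \sqcup \bigsqcup_{r \in R} B_r$ together with MV additivity $\chi(T_S, T_S \cap F^{(1)}) = \chi(T_S^*, T_S^* \cap F^{(1)}) + \chi(B_Q, B_Q \cap F^{(1)}) - \chi(T_S^* \cap B_Q, (T_S^* \cap F^{(1)}) \cap (B_Q \cap F^{(1)}))$, together with the vanishing $\chi(\cZ_k, \cC_k) = 0$ (each is a circle bundle, so both total spaces have zero Euler characteristic) and the computation $\chi(B_r, F_r) = 1 - \chi(F_r) = (-1)^n \mu_r$. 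The only subtle point I foresee is keeping careful track of the combinatorics of $\Gamma_i$, namely the count $2g_i + \tau_i$ of loops and the identification of each $\nu_w$ with the correct transversal monodromy (either along a local branch at a special point or along the outside loop), but this is a routine application of the topology of punctured Riemann surfaces with circle boundary.
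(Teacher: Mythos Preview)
Your proposal is correct and follows essentially the same route as the paper: the relative bundle over the wedge of circles $\Gamma_i$, read either via a Wang-type sequence over a bouquet (the paper's \eqref{eq:commut}) or via the degenerate Serre spectral sequence (your version), yields part (a); parts (b) and (c) are then the same Mayer--Vietoris and Euler-characteristic bookkeeping.

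One small point to tighten in (b): concentration of the three ``piece'' terms in degrees $n-1,n$ gives the zero on the left of \eqref{eq:long}, but not automatically the zero on the right; for that you still need $H^{n+1}(T_S,T_S\cap F^{(1)})=0$. The paper imports this vanishing from the earlier identification of $H^*(T,T\cap F')$ with the vanishing cohomology of the nearby section (via \eqref{eq:slicing3b}, \eqref{eq:slicing4b} and \eqref{eq:retr}). You can also argue directly: $T_S$ retracts to the curve $S$ (so has no cohomology above degree $2$), while $T_S\cap F^{(1)}$ lies in the Stein $(n-1)$-fold $F^{(1)}$ (so has no cohomology above degree $n-1$), and the long exact sequence of the pair does the rest.
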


\begin{proof}
From  \eqref{eq:slicing3b}, \eqref{eq:slicing4b} and \eqref{eq:retr} we know that $H^{j}(T_S,T_S \cap F^{(1)}) = 0$ for $j\le n-2$ and $j>n$.
By \eqref{eq:sumdecomp}   and \eqref{eq:wang2}, the cohomology
$$H^{*}(T_{S}^{*}\cap B_Q, (T_{S}^{*}\cap F^{(1)}) \cap (B_Q\cap F^{(1)})) \cong  \bigoplus_{q\in Q}  \bigoplus_{k\in K_q} H^{*}(\cZ_k, \cC_k)$$ 
is concentrated in dimensions $n-1$ and $n$, and its Euler characteristic is zero since it is a direct sums of relative circle bundles. And by \S \ref{ss:term}, the cohomology $H^{*}(B_Q, B_Q\cap F^{(1)})$ is also concentrated in dimensions $n-1$ and $n$.

  The cohomology $H^{*}(T_{S}^{*}, T_{S}^{*}\cap F^{(1)}) = \bigoplus_{i\in I}H^{*}(T_{i}^{*}, T_{i}^{*}\cap F^{(1)})$ is also concentrated  in degrees $n-1$ and $n$ since the pair  $(T_{i}^{*}, T_{i}^{*}\cap F^{(1)})$ is the relative bundle $(\pi_S^{-1}(\Gamma_i),  \pi_S^{-1}(\Gamma_i) \cap F^{(1)})$ over a bouquet of loops indexed by $W_{i}$ with  fibre $(\tE_i, \tF_i)$. More precisely we have a relative Wang sequence for a bundle over a wedge of circles, the nontrivial part of which is the following:
\begin{equation}\label{eq:commut}
 H^{n-1}(T_{i}^{*}, T_{i}^{*}\cap F^{(1)})  \hookrightarrow  
H^{n-1}(\tE_i, \tF_i)  
\xrightarrow{\bigoplus_{w\in W_{i}}(\nu_w- \id)}
\bigoplus_{w\in W_{i}} H^{n-1}(\tE_i, \tF_i)  
  \twoheadrightarrow  H^{n}(T_{i}^{*}, T_{i}^{*}\cap F^{(1)}) 
\end{equation}


The relative cohomology  group  $H^{n-1}(T_{i}^{*}, T_{i}^{*}\cap F^{(1)})$  identifies to the kernel of the boundary map $\partial = \bigoplus_{w\in W_{i}}(\nu_w- \id)$, and this kernel is the intersection $\bigcap_{w\in W_{i}}\ker(\nu_w- \id \mid  H^{n-2}(\tF_i))$ which is by definition  the submodule of invariants $H^{n-2}(F^{\pitchfork}_{i})^{\hat{A}_{i}}$ defined by the representation \eqref{eq:hatA}. This proves (a). Part (b) follows by using all the above  facts proved in the Mayer-Vietoris sequence \eqref{eq:mv}.

 \noindent 
 (c)  Counting the ranks in the exact sequence \eqref{eq:commut} yields the above claimed formula \eqref{eq:euler0}.
The second formula \eqref{eq:euler} is obtained by taking the 
Euler characteristic in the Mayer-Vietoris long exact sequence \eqref{eq:long}, and using Lemma \ref{p:concentr}(c).
\end{proof}


Let us give a quick consequence of the above study for the particular case $Q=\emptyset$, yielding a slight extension of Theorem \ref{c14a}(c):
 
\begin{corollary}\label{c:no1diminside}
Let $n\ge 3$ and $\dim_{0}\Sing_{\cW} f= 2$. If $\overline{\Sigma_{2}} \cap \Sigma_{1} =\emptyset$ then:
\[
\begin{split}
 H^{n-2}(F) \cong   \bigoplus_{i\in I} H^{n-2}(F^{\pitchfork}_{i})^{\hat{A}_{i}}, \hspace{3cm} \\
H^{n-1}(F)   \hookrightarrow    \bigoplus_{i\in I}  \coker ( \bigoplus_{w\in W_i}(\nu_w-\id \mid  H^{n-2}(\tF_i)) ) \oplus \bigoplus_{r\in R} H^{n-1}(F_{r}).
\end{split}
\]
\end{corollary}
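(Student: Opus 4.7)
The plan is to observe that the geometric hypothesis $\overline{\Sigma_{2}}\cap \Sigma_{1}=\emptyset$ collapses most of the terms in the general machinery of \S\ref{ss:cw}. Concretely, the hypothesis says that no $1$-dimensional stratum lies inside the closure of a $2$-dimensional stratum, so after slicing by the generic hyperplane $l^{-1}(\eta)$ the set of special points $Q_{i}=S_{i}\cap \Sigma_{1}$ is empty for every component $S_{i}$ of $S=\overline{\Sigma_{2}}\cap l^{-1}(\eta)$. Hence $Q=\emptyset$, and every $1$-dimensional stratum contributes only to the ``outside'' part $R$ (isolated hypersurface singularities of the slice living away from $S$).

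With $Q=\emptyset$, the neighborhood $B_{Q}$ vanishes and the intersection term $T_{S}^{*}\cap B_{Q}$ is empty. I would then simply plug $Q=\emptyset$ into the $6$-term exact sequence \eqref{eq:long} of Lemma \ref{p:concentr}(b): all the $q$-indexed summands disappear, and the sequence splits into two isomorphisms
\[
H^{n-1}(T_{S},T_{S}\cap F^{(1)})\cong \bigoplus_{i\in I} H^{n-2}(F^{\pitchfork}_{i})^{\hat{A}_{i}},
\qquad
H^{n}(T_{S},T_{S}\cap F^{(1)})\cong \bigoplus_{i\in I}\coker\Bigl(\bigoplus_{w\in W_{i}}(\nu_{w}-\id)\Bigr).
\]
(Note that in this case $\tau_{i}=0$, so the loops $W_{i}$ consist only of the $2g_{i}$ genus loops of the normalization of $S_{i}$.)

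Next I combine these identifications with the decompositions \eqref{eq:decomp}. The first isomorphism in \eqref{eq:decomp} immediately yields
\[
\widetilde H^{n-2}(F)\cong H^{n-1}(T_{S},T_{S}\cap F^{(1)})\cong \bigoplus_{i\in I}H^{n-2}(F^{\pitchfork}_{i})^{\hat{A}_{i}},
\]
giving the first claimed formula (and in particular, under the standing assumption $\r(X,\bZ)=n+1$, this group is automatically free, consistent with Theorem \ref{c14a}(d)). The second line of \eqref{eq:decomp} is a monomorphism of $\widetilde H^{n-1}(F)$ into $H^{n}(T_{S},T_{S}\cap F^{(1)})\oplus\bigoplus_{r\in R}H^{n-1}(F_{r})$, and substituting the expression for $H^{n}(T_{S},T_{S}\cap F^{(1)})$ obtained above produces the second claimed monomorphism.

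There is no genuine obstacle here: the corollary is essentially a bookkeeping consequence of Lemma \ref{p:concentr} once one checks that $\overline{\Sigma_{2}}\cap \Sigma_{1}=\emptyset$ forces $Q=\emptyset$. The only mildly delicate point worth making explicit in the write-up is that the generic slicing preserves this disjointness (so that $Q_{i}=S_{i}\cap \Sigma_{1}=\emptyset$ actually follows from the hypothesis and the transversality of $l$ to the strata), and that with $Q=\emptyset$ the interior loops in $W_{i}$ are exactly the genus loops, so the invariant submodule $\bigcap_{w\in W_{i}}\ker(\nu_{w}-\id)$ coincides with the full invariant submodule $H^{n-2}(F^{\pitchfork}_{i})^{\hat{A}_{i}}$ under $\hat A_{i}$, as defined in \eqref{eq:hatA}.
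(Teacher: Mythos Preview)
Your proposal is correct and follows essentially the same route as the paper: observe that the hypothesis forces $Q=\emptyset$, plug this into the $6$-term sequence \eqref{eq:long} so that it degenerates into two isomorphisms, and then combine with the decomposition \eqref{eq:decomp}. The paper's proof is a one-line version of exactly this, also noting that $(T_{S},T_{S}\cap F^{(1)})=\bigsqcup_{i\in I}(T_{i}^{*},T_{i}^{*}\cap F^{(1)})$ when $Q=\emptyset$. Your final caveat about the invariant submodule is unnecessary: the equality $\bigcap_{w\in W_{i}}\ker(\nu_{w}-\id)=H^{n-2}(F^{\pitchfork}_{i})^{\hat A_{i}}$ holds by definition for any generating set $W_{i}$ of $\pi_{1}(S_{i}^{*})$, regardless of whether $Q$ is empty.
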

   
\begin{proof}
     By the above construction we have 
$$(T_{S}, T_{S}\cap F^{(1)}) = \bigsqcup_{i\in I}(T_{i}^{*}, T_{i}^{*}\cap F^{(1)})$$
and our result follows by Lemma \ref{p:concentr}(b), by \eqref{eq:decomp} and from the Mayer-Vietoris sequence \eqref{eq:long}
which reduces to two isomorphisms since $Q=\emptyset$.
\end{proof}


\subsection{End of the computation of $H^{n-2}(F)$.}\ 

We continue the computation without any restrictions on the set $Q$. 

The function germ  $f_{| l=\eta}$ at some point $q\in Q$ is a function germ with a $1$-dimensional singularity, and we have denoted by $F_{q}$ its Milnor fiber.  The pair 
 $(T_{S}^{*} \cap B_Q, T_{S}^{*} \cap B_Q\cap F^{(1)})$ is a disjoint union of pairs localized at the points of $Q$. 
Let us denote  $(\cZ_{q}, \cC_{q}) := \bigsqcup_{k\in K_{q}} (\cZ_{k},\cC_{k})$, where  the pair $(\cZ_{k},\cC_{k})$
is defined in  \eqref{eq:sumdecomp} and corresponds to the  branch $S_{q,k}$ of the germ of the singular locus $(S,q)$.
We claim that we have the natural monomorphism\footnote{Remark that the monomorphism \eqref{eq:incl} can be viewed as a particular case of \eqref{upb}.} (induced by the inclusion of pairs):
\begin{equation}\label{eq:incl}
  \widetilde H^{n-2}(F_{q}) \cong  H^{n-1} (B_{q}, F_{q}) \stackrel{\iota}{ \hookrightarrow}  H^{n-1}(\cZ_{q}, \cC_{q})  \cong \bigoplus_{k\in K_q} \ker (\nu_k - \id \mid  \widetilde H^{n-2}(\tF_k))
\end{equation}
where the last isomorphism follows directly from \eqref{eq:wang2}, and where $\nu_{k}$ is here the vertical monodromy acting on the branch  $S_{q,k}$ of the curve germ $(S, q)$.  

To prove our claim, we apply  the results and notations of \S\ref{Tib} for $f$  to the function germ $g_{q} := f_{| l=\eta}$ at $q\in Q$, see in particular  Definition \ref{d:vanish} and the notations following it.  Its singular locus $\Sing_{\cW\cap \{l=\eta\}} g_{q}$  coincides with  $S$,  viewed as set germs at $q$.
We shall use the index $q$ to designate similar objects for $g_{q}$, namely  $T_{q}$ will be the tubular neighbourhood of the curve germ  $(S,q)$, and  $L_{q}$ the slice monodromy acting on the slice $F'_{q}$ of the Milnor fibre $F_{q}$ of  $g_{q}$.

By  \eqref{eq:monomorphism0}, we have the monomorphism:
\begin{equation}\label{eq:monomorphism1}
\widetilde{H}^{n-2}(F_{q}) \hookrightarrow  \ker \big[L_{q} -\id \mid  H^{n-1}(T_{q}, T_{q}\cap F'_{q})\big]
\end{equation}

The pair $(T_{q}, T_{q}\cap F'_{q})$ is a disjoint union, over  ${k\in K_q}$,  of the  transversal Milnor data
 $(B_{k, j}, F_{k}^{\pitchfork})$ at some  point of the branch  cut out by a generic transversal slice $\cH$ of the space germ $(X \cap l^{-1}(\eta), q)$ passing close enough to the point $q$. The number of those points where the generic slice intersects  the branch  $S_{q,k}$ is the local multiplicity $\mult_{q} (S_{q,k})$, and let us index these points by the set $\Lambda_{q,k}$.
   With these notations at hand, we have the isomorphism:  
\begin{equation}\label{eq:section-n-s}
 H^{n-1}(T_{q}, T_{q}\cap F'_{q}) \cong \bigoplus_{k\in K_q}\bigoplus_{j\in \Lambda_{q,k}} H^{n-1}(B_{k, j}, F_{k}^{\pitchfork}),
\end{equation}
 where the Milnor data $(B_{k, j}, F_{k}^{\pitchfork})$ are the same for all $j\in \Lambda_{q,k}$, up to homeomorphisms. 

The $l$-monodromy acts on the direct sum of \eqref{eq:section-n-s} for each fixed $k$,  and this action corresponds  to the cyclic movement of a singular point of $S_{q,k}\cap \cH$. 
  As explained in \cite{Si-mon},  \cite{Ti-iomdin},   \cite[Proof of Prop. 3.1]{Ti-nonisol},  this yields a particular shape of the matrix of the monodromy matrix $L$, which gives the following direct sum splitting:
  $$\ker \big[L_{q} -\id \mid  H^{n-1}(T_{q}, T_{q}\cap F'_{q})\big]
  \cong    \bigoplus_{k\in K_{q}}  \ker (\nu_{k} -\id \mid  H^{n-2}( F_{k}^{\pitchfork})).$$
By comparing it with \eqref{eq:monomorphism1}, this finishes the proof of our claimed monomorphism \eqref{eq:incl}.
  
\medskip 

Coming back to the global picture, let us point out that in the direct sum of \eqref{eq:incl}, each fiber  $\tF_k$  identifies with the transversal  fiber $\tF_i$ for the  $i\in I$ corresponding to $k$ in the decomposition {$K_q = \bigsqcup_{i\in I} K_{qi}$.
Let us denote by $\iota_{q,k}$ the composition of the injection $\iota$ from \eqref{eq:incl} with the projection on the direct summand ${\ker} (\nu_k - \id \mid  H^{n-2}(\tF_k))$.

By the exact sequence \eqref{eq:mv} via Lemma \ref{p:concentr}(a) and the exact sequence \eqref{eq:long}, and from the expression of the second direct summand given in  \S \ref{ss:term}, we have $H^{n-2} (F)\cong \ker  j$, where $j$ is the following morphism which occurs in \eqref{eq:long}:

\begin{equation}\label{eq:hook}
\bigoplus_{i\in I} H^{n-2}(F^{\pitchfork}_{i})^{\hat{A}_{i}} \oplus  \bigoplus_{q\in Q} H^{n-2}(F_q) \stackrel{j}{\longrightarrow}   \bigoplus_{q\in Q} \bigoplus_{k\in K_q} {\ker} \; (\nu_k - \id \mid  H^{n-2}(\tF_k)).
  \end{equation}
Let $$j_{1}: \bigoplus_{i\in I} H^{n-2}(F^{\pitchfork}_{i})^{\hat{A}_{i}} \to  \bigoplus_{q\in Q} \bigoplus_{k\in K_q} {\ker} \; (\nu_k - \id \mid  H^{n-2}(\tF_k))$$ denote the restriction of $j$ to the first summand. We will denote by $j_{2}$ the restriction of $j$ to the second summand. 
By Lemma  \ref{p:concentr}(a) and (b), $j_{1}$ identifies with
the map:
\begin{equation}\label{eq:mapj1}
  \bigoplus_{i\in I}\bigcap_{w\in W_{i}}\ker (\nu_w- \id \mid  H^{n-2}(\tF_i)) \stackrel{j_{1}}{\longrightarrow} \bigoplus_{q\in Q} \bigoplus_{k\in K_q} {\ker} \; (\nu_k - \id \mid  H^{n-2}(\tF_k)),
\end{equation}
which is the diagonal map, in the sense that, for each fixed $i\in I$ such that $Q_{i}\not= \emptyset$, the intersection from the left side injects into each of the 
members of the direct sum from the right taken over $q\in Q_{i}$.
Indeed, if $Q_{i}\not= \emptyset$,  the intersection $\bigcap_{w\in W_{i}}$  of kernels from the left side of \eqref{eq:mapj1} injects in each kernel $\ker (\nu_k- \id)$ from the right side, for any $k\in K_{q}$ and  $q\in Q_{i}$. 
On the other hand, if $Q_{i}= \emptyset$, then $\bigcap_{w\in W_{i}}\ker (\nu_w- \id \mid  H^{n-2}(\tF_i))$ is clearly included in $\ker j$. 

So, if  $I_{0} := \{ i\in I\mid Q_{i}=\emptyset \}$, let $j'_{1}$ denote the restriction of $j_{1}$ from \eqref{eq:mapj1} where in the source we take only  the direct sum  $\bigoplus_{i\in I\m I_{0}}$ corresponding to the indices $I\m I_{0}$.

Let us denote by $j_{1}(H^{n-2}(F^{\pitchfork}_{i})^{\hat{A}_{i}})$  the subgroup in $\ker (\nu_k- \id \mid  H^{n-2}(\tF_i))$ for every $k\in K_{qi}$ and $q\in Q_{i}$. With all these notations and preliminaries we obtain the following:

\smallskip
\begin{theorem}\label{t:euler}
 Let $(X,0)$ be a pure $n+1$ dimensional space germ with $\r(X,\bZ)=n+1$,  $n\ge 3$ and $\dim \Sing_{\cW} f = 2$. Then:
 \begin{enumerate}
\item $H^{n-2}(F)$ is free. The morphisms $j'_{1}$ and $j_{2}$ are injective. 
\item   
$H^{n-2}(F) \cong  G \oplus \bigoplus_{i\in I_{0}} H^{n-2}(F^{\pitchfork}_{i})^{\hat{A}_{i}},$
 where  $G\cong \im j'_{1} \cap \im j_{2}$ is a submodule of
  $$\bigoplus_{i\in I\m I_{0}} \big[ j_{1}(H^{n-2}(F^{\pitchfork}_{i})^{\hat{A}_{i}})\cap \bigcap_{q\in Q_{i},  k\in K_{qi}}\iota_{qk}(H^{n-2}(F_q)) \big].
 $$
\end{enumerate}
\end{theorem}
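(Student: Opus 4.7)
The plan is to unfold the proof in four stages, all hinging on the identification $H^{n-2}(F) \cong \ker j$ given by the six-term Mayer--Vietoris sequence \eqref{eq:long}. First I would split the source of $j$ along the decomposition $I = I_0 \sqcup (I \setminus I_0)$. By the diagonal description of $j_1$ recorded in \eqref{eq:mapj1}, its $i$-th summand targets precisely those pairs $(q,k)$ with $q \in Q_i$ and $k \in K_{qi}$, so it vanishes identically whenever $Q_i = \emptyset$, i.e.\ whenever $i \in I_0$. Consequently every pair $(\alpha, 0)$ with $\alpha$ supported on $I_0$ lies automatically in $\ker j$, producing the direct summand $\bigoplus_{i\in I_0} H^{n-2}(F^{\pitchfork}_i)^{\hat{A}_i}$ of $H^{n-2}(F)$.

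Next I would verify the injectivity claims of part (a). For $j_2$ this is immediate: it decomposes as $\bigoplus_{q\in Q}\iota_q$ with each $\iota_q = \bigoplus_{k\in K_q}\iota_{qk}$ a monomorphism by \eqref{eq:incl}. For $j'_1$ the key combinatorial input is that the local branches of $S$ at a point $q\in Q$ are partitioned by component, so that $K_q = \bigsqcup_{i\,:\,q\in Q_i} K_{qi}$ is a disjoint union. Hence the $(q,k)$-component of $j'_1(\alpha')$ depends only on the unique index $i$ with $k \in K_{qi}$, where it equals the image of $\alpha'_i$ under the canonical injection $\bigcap_{w\in W_i}\ker(\nu_w - \id) \hookrightarrow \ker(\nu_k - \id)$. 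Since for each $i \in I \setminus I_0$ the set $Q_i$ is nonempty, the tuple $(\alpha'_i)_i$ can be recovered from $j'_1(\alpha')$, establishing the injectivity of $j'_1$.

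With both $j'_1$ and $j_2$ injective, the condition $j_1(\alpha) = j_2(\beta)$ decouples into $\alpha_{I_0}$ being free and $j'_1(\alpha_{I\setminus I_0}) = j_2(\beta)$, yielding
\[ \ker j \cong \bigoplus_{i\in I_0} H^{n-2}(F^{\pitchfork}_i)^{\hat{A}_i} \,\oplus\, \ker(j'_1 - j_2). \]
The assignment $(\alpha', \beta)\mapsto j'_1(\alpha') = j_2(\beta)$ sends $\ker(j'_1 - j_2)$ onto $\im j'_1 \cap \im j_2$ by construction and injectively by the previous step, so $G \cong \im j'_1 \cap \im j_2$. To place $G$ inside the direct sum $\bigoplus_{i \in I \setminus I_0} \bigl[j_1(H^{n-2}(F^{\pitchfork}_i)^{\hat{A}_i}) \cap \bigcap_{q \in Q_i,\, k \in K_{qi}} \iota_{qk}(H^{n-2}(F_q))\bigr]$, I would exploit both avatars of an element $\gamma \in G$: its $(q,k)$-component is simultaneously the common value $\alpha'_i$ (viewed inside $j_1(H^{n-2}(F^{\pitchfork}_i)^{\hat{A}_i})$) and $\iota_{qk}(\beta_q)$, forcing $\alpha'_i$ to lie in the claimed intersection.

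Finally, for freeness I would note that the source of $j$ is a free $\bZ$-module: each $H^{n-2}(F^{\pitchfork}_i)$ is free because $F^{\pitchfork}_i$ is the Milnor fibre of a transversal isolated singularity, so the invariant submodule $H^{n-2}(F^{\pitchfork}_i)^{\hat{A}_i}$ is free as well, and each $H^{n-2}(F_q)$ is free by applying Theorem \ref{c14a}(d) to the function germ $f_{|l=\eta}$ at $q$, whose singular locus is one-dimensional so that $n-2$ is its lowest possibly nontrivial cohomological degree. As $H^{n-2}(F) \cong \ker j$ is a subgroup of a free abelian group, it is itself free. The main obstacle throughout is the combinatorial bookkeeping in separating the indices $I_0$ from $I\setminus I_0$ and in correctly matching the two descriptions of $G$ as $\ker(j'_1 - j_2)$ and as $\im j'_1 \cap \im j_2$; once this is untangled, the remainder is formal manipulation within the sequence \eqref{eq:long}.
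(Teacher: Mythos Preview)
Your proof is correct and follows essentially the same approach as the paper: both identify $H^{n-2}(F)$ with $\ker j$ via the six-term sequence \eqref{eq:long}, split off the $I_0$-summand using that $j_1$ vanishes there, establish the injectivity of $j'_1$ (diagonal structure) and $j_2$ (from \eqref{eq:incl}), deduce $G\cong \im j'_1\cap\im j_2$, and then embed $G$ in the displayed direct sum. Your presentation is somewhat more explicit in articulating the bijection $\ker(j'_1-j_2)\to\im j'_1\cap\im j_2$ and in justifying freeness by invoking Theorem~\ref{c14a}(d) for the sliced germ, whereas the paper appeals directly to \eqref{eq:incl}; these are cosmetic differences only.
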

 
\begin{proof}
\noindent
(a). We have $H^{n-2} (F)\cong \ker  j$, where $j$ is given in \eqref{eq:hook}, and this is  consequence of the assumption $\r(X,\bZ) = n+1$. The freeness follows then from the fact that the source of $j$ is a free $\bZ$-module, since $H^{n-2}(F^{\pitchfork}_{i})^{\hat{A}_{i}}$ are free, and $H^{n-2}(F_q)$ are also free due to \eqref{eq:incl}, both being consequences of the assumption $\r(X, \bZ) = n+1$ (see also the proof of Theorem \ref{c14a}(d)).

 We have seen before that $j'_{1}$ is injective and it is actually the diagonal map for each fixed $i\in I$. 
Let us now consider 
$$j_{2}: \bigoplus_{q\in Q} H^{n-2}(F_q) {\to}   \bigoplus_{q\in Q} \bigoplus_{k\in K_q} {\ker} \; (\nu_k - \id \mid  H^{n-2}(\tF_k))$$
 as the restriction of $j$ to the second summand.
After taking the direct sum $\bigoplus_{q\in Q}$ at both sides of \eqref{eq:incl}, we deduce the injectivity of $j_{2}$.\\

\noindent
(b). By the same argument as for Corollary  \ref{c:no1diminside}, we get the inclusion $\ker j \supset \bigoplus_{i\in I_{0}} H^{n-2}(F^{\pitchfork}_{i})^{\hat{A}_{i}}$.  Since $j'_{1}$ and $j_{2}$ are injective, we deduce:  
$$\ker j \cong  (\im j'_{1} \cap \im j_{2})   \oplus \bigoplus_{i\in I_{0}} H^{n-2}(F^{\pitchfork}_{i})^{\hat{A}_{i}}.$$

 Next, for 
any fixed $i\not\in I_{0}$, let us consider 
the following restriction, where in the target we take the summands over  $K_{qi}$ only:
\[ j_{|} : H^{n-2}(F^{\pitchfork}_{i})^{\hat{A}_{i}} \oplus  \bigoplus_{q\in Q_{i}} H^{n-2}(F_q) 
{\longrightarrow}   \bigoplus_{q\in Q_{i}} \bigoplus_{k\in K_{qi}} {\ker} \; (\nu_k - \id \mid  H^{n-2}(\tF_k))
\]
Its kernel is then isomorphic $j_{1}(H^{n-2}(F^{\pitchfork}_{i})^{\hat{A}_{i}})\cap \bigcap_{q\in Q_{i} k\in K_{qi}}\iota_{qk}(H^{n-2}(F_q))$, due to the fact that $j_{1}$ is the diagonal map.

Nevertheless, when we take the direct sum over $i\in I\m I_{0}$,  the kernels may not add up since there is interaction between different 
$S_{i}$ due to the fact that whenever $q\in Q$ belongs to several irreducible components $S_{i}$,
then the fiber $F_{q}$ contributes with $\iota_{qk}(H^{n-2}(F_q))$ for all those indices $k$ such that $k\in K_{qi}$. Therefore we do not get $\ker j$ as a direct sum but as a submodule only; our claim is proved.
\end{proof}
 
We immediately get from the above theorem the following:
\begin{corollary} \label{p:conc} Let $n\ge 3$. Let $I_{1} := \{ i\in I \m I_{0}\mid H^{n-2}(F_q)=0 \mbox{ for some } q\in Q_{i}\}$. 
\begin{enumerate}
\item
$G$ is isomorphic to a submodule of  $\bigoplus_{\substack{q\in Q_{i} \\ i\not\in I_{1}}} H^{n-2}(F_q)$. 
In particular, if $I_{1}=I$,  then $H^{n-2}(F) =0$.
\item 
$b_{n-2}(F) \le \sum_{i\in I \m I_{1}} \min \{ b_{n-2}(F^{\pitchfork}_{i}),    b_{n-2}(F_{q}) \mid q\in Q_{i}\}.$
\end{enumerate} \fin
\end{corollary}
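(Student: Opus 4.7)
The plan is to read off Corollary \ref{p:conc} directly from the decomposition in Theorem \ref{t:euler}(b), which already provides
\[ H^{n-2}(F) \cong G \oplus \bigoplus_{i \in I_0} H^{n-2}(F^\pitchfork_i)^{\hat{A}_i} \]
together with the ambient bound $G \subseteq \bigoplus_{i \in I \setminus I_0} M_i$, where we set
\[ M_i := j_1(H^{n-2}(F^\pitchfork_i)^{\hat{A}_i}) \cap \bigcap_{q \in Q_i,\, k \in K_{qi}} \iota_{qk}(H^{n-2}(F_q)). \]
The key step is to observe that $M_i = 0$ whenever $i \in I_1$. Indeed, by definition of $I_1$ one picks $q_0 \in Q_i$ with $H^{n-2}(F_{q_0}) = 0$; since $q_0 \in Q_i$, the set $K_{q_0 i}$ is non-empty, so $\iota_{q_0 k}(H^{n-2}(F_{q_0})) = 0$ for every $k \in K_{q_0 i}$. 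Recalling from \eqref{eq:mapj1} that $j_1$, restricted to the $i$-summand $H^{n-2}(F^\pitchfork_i)^{\hat{A}_i}$, is the diagonal embedding into the coordinates indexed by $(q,k)$ with $q \in Q_i$, $k \in K_{qi}$ (using $\tF_k = \tF_i$), any element of $M_i$ must vanish in the $(q_0,k)$-slot and is therefore itself zero.

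For part (a), this vanishing upgrades the inclusion to $G \subseteq \bigoplus_{i \in I \setminus (I_0 \cup I_1)} M_i$. To re-express it as an embedding into $\bigoplus_{q \in Q_i,\, i \notin I_1} H^{n-2}(F_q)$, I will invoke the injectivity of $j_2$ established in Theorem \ref{t:euler}(a): pulling back $G \subseteq \im j_2$ yields an embedding into $\bigoplus_{q \in Q} H^{n-2}(F_q)$, and if $(b_q)_q$ lies in the pullback with associated element $(a_i)_i \in \im j'_1$, then $\iota(b_q)|_k = a_i$ for every $k \in K_{qi}$. Since $a_i = 0$ whenever $i \in I_1$ by the main step above, the relation forces $\iota(b_q) = 0$, hence $b_q = 0$ by injectivity of $\iota$, whenever $q$ is not contained in any $Q_i$ with $i \notin I_1$. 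This yields the claimed embedding. The in-particular clause is then immediate: the condition $I_1 = I$ forces $I_0 = \emptyset$ (since $I_1 \subseteq I \setminus I_0$) and simultaneously kills $G$, so $H^{n-2}(F)=0$.

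For part (b), the plan is to take ranks in the decomposition of Theorem \ref{t:euler}(b). Each summand for $i \in I_0$ contributes at most $b_{n-2}(F^\pitchfork_i)$. For $i \in I \setminus (I_0 \cup I_1)$, the submodule $M_i$ is simultaneously contained in $j_1(H^{n-2}(F^\pitchfork_i)^{\hat{A}_i})$ and, for any chosen $q \in Q_i$ and $k \in K_{qi}$, in $\iota_{qk}(H^{n-2}(F_q))$, whence $\rank M_i \leq \min\{b_{n-2}(F^\pitchfork_i),\, b_{n-2}(F_q) \mid q \in Q_i\}$. Summing over $i \in I \setminus I_1$, with the convention that for $i \in I_0$ (where $Q_i = \emptyset$) the minimum reduces to $b_{n-2}(F^\pitchfork_i)$, gives the stated inequality. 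The only delicate point in the whole argument is that the inclusion of Theorem \ref{t:euler}(b) is not a genuine direct-sum decomposition when a point $q$ belongs to several $S_i$; this is absorbed harmlessly here, because we only need an upper bound on ranks and an embedding into $\bigoplus_q H^{n-2}(F_q)$, not a splitting.
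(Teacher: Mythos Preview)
Your proof is correct and is precisely the argument the paper leaves implicit (the paper offers no proof beyond ``We immediately get from the above theorem the following''). You supply the expected details: the vanishing $M_i=0$ for $i\in I_1$ via the diagonal description of $j_1$, the pullback through the injective $j_2$ and $\iota$ for part~(a), and the rank count for part~(b).
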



\section{Examples}\label{examples}
In this section, we apply our results to specific computations on examples. Let us refer here again to \cite{Za, Ne2}, not only for very interesting and sharp results about the Milnor fibre in a  class of functions with  $s=2$, but also for examples of computations in homology.  More computations in homology are done in \cite{ST-deform} in case of admissible deformations of $1$-dimensional singularities, which case, as we have already mentioned before, has common grounds with our case $s=2$, whereas the computations in cohomology are different.

\begin{example}\label{rem010}
If the singularity at the origin of $f:(X,0) \to (\bC,0)$ is contained in a positive dimensional stratum $S$, say of complex dimension $r \leq s<n$, then by the local product structure around the origin we get:
$$\widetilde{H}^{n-r+1}(F) \cong \cdots \cong \widetilde{H}^{n}(F) \cong 0.$$
So in this case the reduced cohomology of the Milnor fiber $F$ is concentrated in degrees $[n-s, n-r]$.
In terms of vanishing cycles, this can be seen from the {\it base change property} (e.g., see \cite[Lemma 4.3.4]{Sc}) as follows: if $H$ is a normal slice (in the ambient space $\bC^N$) through the origin to the stratum $S$ containing the origin, then we have for $f':=f\vert_H$ and with $\cGb$ a bounded constructible complex that:
\be
\varphi_{f'}(\cGb\vert_H) \cong (\varphi_f\cGb)\vert_H.
\ee 

Let us consider the example $f:\bC^4\to \bC$ given by $f=xyz$.
The singular locus of $f$ is given by 
$\Sigma=H_{xy} \cup H_{xz} \cup H_{yz},$
with $H_{xy}:=\{x=y=0\}$,  $H_{xz}:=\{x=z=0\}$, $H_{yz}:=\{y=z=0\}$. These three complex 2-planes intersect mutually along  the line $H_{xyz}:=\{x=y=z=0\}\cong \bC$. Then $\Sigma$ has a Whitney stratification with three $2$-dimensional strata
$$\Sigma_{2,1}:=H_{xy} \setminus H_{xyz}, \ \Sigma_{2,2}:=H_{xz} \setminus H_{xyz}, \ \Sigma_{2,3}:=H_{yz} \setminus H_{xyz},$$
each homotopy equivalent to a circle, 
and a $1$-dimensional stratum $\Sigma_1=H_{xyz}$. 
The transversal type of each of the three $2$-dimensional strata is A$_{1}$, and each vertical monodromy $A_i$ is the identity.  Formula \eqref{upb} yields in this case that $H^1(F) \hookrightarrow \bZ^3$. It appears that this inclusion is strict, which also shows that the monomorphism in \eqref{upb} is not in general an isomorphism. Indeed, since the singularity of $f$ at the origin is contained in a $1$-dimensional stratum, by slicing with a generic $\bC^3$ through the origin reduces the calculation of $H^1(F)$ to the case of the $1$-dimensional singularity defined by $xyz=0$ in $\bC^3$, for which the origin is a $0$-dimensional stratum. This was considered by Siersma in \cite{Si3}, who computed that  the Milnor fiber $F$ at the origin is homotopy equivalent to $S^1\times S^1$, hence $H^1(F) \cong \bZ^2$. 
This calculation can also be performed via Theorem \ref{t:euler}(a): indeed, we have  $j_{1}= \id$, and hence $H^1(F) \cong \ker j \cong \im ( j_{2}: \bZ^{2}\to \bZ^{3}) \cong \bZ^{2}$, since $j_{2}$ is injective.
\end{example}

\begin{example}
Let $f:\bC^4\to \bC$ be given by $f=xyzu$.
The singular locus of $f$ is $2$-dimensional and is given by 
$$\Sigma=H_{xy} \cup H_{xz} \cup H_{xu} \cup H_{yz}  \cup H_{yu}  \cup H_{zu}, $$
where $H_{xy}:=\{x=y=0\}$, etc, thus each component is a $2$-plane in $\bC^4$. Any two of the six components of $\Sigma$ intersect either along a complex line, e.g., $H_{xy} \cap H_{xz}=H_{xyz}=\{x=y=z=0\}$ is the $u$-line, or at the origin, e.g., $H_{xy} \cap H_{zu}=\{(0,0,0,0)\}$. Moreover, any three (or more) of the six components of $\Sigma$ intersect at the origin, e.g.,  $H_{xy} \cap H_{xz} \cap H_{xu}=H_{xyzu}=\{x=y=z=u=0\}=\{(0,0,0,0)\}$. 
A Whitney stratification of $\Sigma$ can be given with six $2$-dimensional strata, one for each component of $\Sigma$, which are of the form
$$\Sigma_{2,1}:=H_{xy} \setminus ( H_{xyz} \cup H_{xyu} ) \cong \bC^* \times \bC^*,$$
etc. There are four $1$-dimensional strata of the form $H_{xyz} \setminus H_{xyzu} \cong \bC^*$, and the origin $H_{xyzu}=\{(0,0,0,0)\}$ is a $0$-dimensional stratum.

Each of these $2$-dimensional strata has the homotopy type of $S^1 \times S^1$ and $A_1$-transversal type. 
The two generators of the fundamental group $\bZ^2$ of a $2$-dimensional stratum act trivially on the first cohomology group, $\bZ$, of the corresponding transversal Milnor fiber, so the vertical monodromy along each of the $2$-dimensional strata is the identity.  
Formula \eqref{upb}  yields the inclusion $H^1(F) \hookrightarrow \bZ^6$.

Let us further use Theorem \ref{t:euler}.
A generic slice $l=\eta$ has 1-dimensional singularities. We have that $S$ is a configuration of  6 lines intersecting at 4 points, where 3 lines pass through each of the 4 points.\footnote{One may refer to \cite[\S 5.4]{ST-deform} for a related computation in homology in case of an admissible deformation of a 4 planes central arrangement.} Thus $S_{i}^{*} \simeq \bC^{**}$, $i=1,\ldots, 6$,  and each local vertical monodromy around such a puncture is the identity.  We compute $j$ and find that the image of $j_{1}$ has 6 generators. Intersecting it with 
the image of $j_{2}$ introduces 3 relations among these generators, more precisely a symmetric linear relation between 3 generators in each of the 4 punctures, and resolving the system  yields finally 3 linear relations among the 6 generators. We get $H^1(F) \cong \ker j \cong \bZ^{3}$. 

On the other hand,  by direct computation, the fibre $F=\{xyzu=1\}$ is homotopy equivalent to 
$S^1\times S^1 \times S^1$, hence $H^1(F) \cong \bZ^3$, confirming the above result. 
\end{example}

\begin{example}
Let $f:\bC^4\to \bC$ be given by $f=x^2z+y^2u$.
The hypersurface $f=0$ has a $2$-dimensional singular locus $\Sigma=\{x=y=0\}$, with Whitney strata $\Sigma_0=\{(0,0,0,0)\}$, $\Sigma_1=\{(0,0,0,u) \mid u\neq 0\} \cup \{(0,0,z,0) \mid z\neq 0\}$ and $\Sigma_2=\{(0,0,z,u) \mid z \neq 0, u \neq 0\}$. The transversal Milnor fiber $F^\pitchfork$ to the stratum $\Sigma_2$ is the Milnor fiber of the singularity at $(0,0)$ of the curve $x^2+y^2=0$ in $\bC^2$, so $F^\pitchfork\simeq S^1$. It follows from Theorem \ref{c14a}(a) that if $F$ denotes the Milnor fiber of $f$ at the origin, then
$$H^1(F) \hookrightarrow \bZ.$$
On the other hand, it can be seen by using the Thom-Sebastiani theorem that $F\cong S^1 \ast S^1 \simeq S^3$, so in fact $H^1(F) =0$.

In the setting of Theorem  \ref{t:euler}, slicing with the hyperplane $u+z=1$ and taking advantage of the homogeneity which implies that all the fibrations are global, we get $S^{*}\simeq \bC^{**}$, where the vertical monodromies $\nu_{i}$ around the two special points are $-\id$. This implies that $\ker (\nu_{i}-\id) = 0$ and therefore Theorem \ref{t:euler}(b) yields indeed that $H^1(F) =0$.
\end{example}

\begin{example}\label{ex77}
Let $f:\bC^4\to \bC$ be given by $f=x^2+x(y^2+z^2+u^2)$.
The singular locus of the hypersurface $f=0$ has a Whitney stratification with a zero-dimensional stratum $\Sigma_0=\{(0,0,0,0)\}$ and a two-dimensional stratum 
$$\Sigma_2=V(x,y^2+z^2+u^2) \setminus \{(0,0,0,0)\}.$$ 
The transversal Milnor fiber $F^\pitchfork$ to the stratum $\Sigma_2$ is the fibre of a $A_{1}$ singularity. The stratum $\Sigma_2$ is homotopy equivalent to the link of a quotient surface singularity of type $A_1$, hence $\pi_1(\Sigma_2)=\bZ/2$. 
Since there are no $1$-dimensional strata, Theorem \ref{c14a}(c) shows that the first cohomology group of the Milnor fiber $F$ of $f$ at the origin is computed as:
$$H^1(F) \cong  H^1(F^\pitchfork)^{\bZ/2}.$$
Note that we can write $f=P(h,g)$, with $h=x:\bC^4 \to \bC$, $g=y^2+z^2+u^2:\bC^4 \to \bC$ and $P=h^2+hg$, so the Milnor fiber $F$ at $0$ can be deduced from \cite[Theorem A]{Ne} as having the homotopy type of $S^1\vee S^3 \vee S^3$, which gives $H^1(F)\cong \bZ$. Alternatively, after a change of coordinates, we note that $F$ is the Milnor fiber of the singularity at the origin of the polynomial  $x^2+(y^2+z^2+u^2)^2$, and its homotopy type can be easily deduced via the Thom-Sebastiani theorem as the suspension on two disjoint $S^2$'s.
 
Let us indicate how this isomorphism follows from Theorem \ref{t:euler}(b). By this result  we get $H^1(F) \cong  H^1(F^\pitchfork)^{\bZ}$ since the stratum $S$ is the complex link of  
 $\Sigma_2$ hence homotopy equivalent to a circle. We  compute the vertical monodromy $\nu$ of $H^1(F^\pitchfork)$ along $S$. We first slice near the origin with $\{ l:= u =\eta\}$ and consider the restriction $f\vert_{u=\eta}= f'(x,y,z) = x^{2} + x(y^{2}+z^{2} + \eta^{2})$ with singular locus $S= Z(x,y^2+z^2+\eta^2)$. We consider a circle $y^2+z^2 = t>0$ (in real coordinates) which is a geometric generator of the fundamental group of $S$. The monodromy action $\nu$ on  $H^1(F^\pitchfork) \cong \bZ$ along this circle is the identity, thus  $H^1(F) \cong \ker (\nu -\id \mid H^1(F^\pitchfork)) = \bZ$. 
\end{example}

More generally, consider the following example.
\begin{example}\label{ex77b}

Let $f:\bC^4\to \bC$ be given by $f=x^p+(y^2+z^2+u^2)^q$, with $p,q \geq 2$. This is very similar to Example \ref{ex77} above, in which $p=q=2$. Indeed, the singular locus of the hypersurface $f=0$ has a Whitney stratification with a zero-dimensional stratum $\Sigma_0=\{(0,0,0,0)\}$ and a two-dimensional stratum $$\Sigma_2=V(x,y^2+z^2+u^2) \setminus \{(0,0,0,0)\}.$$ 
Outside of the origin in $\bC^4$ the functions $x$ and $g=y^2+z^2+u^2$ are part of a coordinate system and define the smooth singular set.
\noindent 
The transversal Milnor fibre may be found as follows: at the two points of the 
set $\Sigma_{2} \cap \{y=z= a\} = \{ x=0, y=z= a, u^{2}+2a^{2} =0\}$, for some $a\not= 0$, we consider two Milnor balls in the ``vertical'' slice  $\{y=z= a\}$.
At each of the two points, one has local coordinates $v:=x$ and $w:=g$ in the slice $\{y=z= a\}$.  The transversal Milnor fibres at the two points are both described by the equation  $x^p+(2a^{2}+u^2)^q =\eta$. 
Therefore, the transversal singularity of $f$ at a point in $\Sigma_2$ is locally expressed by 
$x^p+g^q=0$. So the transversal Milnor fiber $F^\pitchfork$ to the stratum $\Sigma_2$ is a bouquet of $(p-1)(q-1)$ circles $S^1$, with $H^1(F^\pitchfork) \cong  \bZ^{(p-1)(q-1)}$.  

\noindent 
Once again,
$\pi_1(\Sigma_2)=\bZ/2$, and since there are no $1$-dimensional strata, Theorem \ref{c14a}(c) shows that the first cohomology group of the Milnor fiber $F$ of $f$ at the origin is computed as:
$$H^1(F) \cong  H^1(F^\pitchfork)^{\bZ/2} \hookrightarrow \bZ^{(p-1)(q-1)}.$$
Let us also note that an application of the Thom-Sebastiani theorem shows that 
the Milnor fiber $F$ of $f$ at the origin is homotopy equivalent to the join of $p$ points (corresponding to $x^p=1$) with a disjoint union of $q$ spheres $S^2$ (each given by $y^2+z^2+u^2=1$). In particular, $H^1(F) \cong \bZ^{(p-1)(q-1)}$.
This implies that $\pi_1(\Sigma_2)$ acts trivially on $H^1(F^\pitchfork)$. 

\noindent 
 In order to apply Theorem \ref{t:euler}(b), we compute the vertical monodromy on the stratum 
 $S$ which is by definition the complex link  of the 2-dimensional stratum $\Sigma_{2}$. 
It may be defined by  $y^2+z^2 = \e$,  thus it is homotopy equivalent to a circle. A geometric generator of its
$\pi_{1}$ may be given by the same equation in real coordinates, considering $\e>0$ as a real constant.

While the real coordinates $y=\e \cos t , z = \e \sin t$, for $t\in [0, 2\pi[$, describe this circle, each of the two transversal Milnor balls  move along with fixed coordinates $x$ and $u$. This yields in particular a geometric monodromy which is the identity on all coordinates. 

\end{example}

 \medskip

\noindent{\bf Acknowledgements.} We thank Dirk Siersma and J\"org Sch\"urmann for their interesting and helpful remarks. Morihiko Saito sent us comments on an earlier version, we thank him for the constructive side.



\end{document}